\documentclass[11pt,reqno]{amsart}
\usepackage[utf8]{inputenc}
\usepackage{amsmath,amssymb,amsthm,mathrsfs,mathtools,color,dsfont,enumerate}
\usepackage{a4wide}
\usepackage{graphics}
\usepackage{epsfig}
\usepackage{caption}
\usepackage{subcaption}
\usepackage{enumitem}   
\usepackage[colorlinks=true]{hyperref}
\hypersetup{,urlcolor=blue, citecolor=red, linkcolor=blue}
\usepackage[]{diagbox}
\usepackage{bm} %for bold in mathmode

\parskip = 0.05 in

%----------------------------------
\newtheorem{theorem}{Theorem}[section]
\newtheorem{prop}[theorem]{Proposition}
\newtheorem{lemma}[theorem]{Lemma}
\newtheorem{corollary}[theorem]{Corollary}
\newtheorem{definition}[theorem]{Definition}
\newtheorem{remark}[theorem]{Remark}
\newtheorem{ex}[theorem]{Example}

\newcommand{\R}{{\mathbb R}}
\newcommand{\Z}{{\mathbb Z}}
\newcommand{\N}{{\mathbb N}}
\newcommand{\E}{{\mathbb E}}

\renewcommand{\P}{{\mathbb P}}

\newcommand{\pol}{\textup{pol}}

\newcommand{\USC}{\textsc{USC}}
\newcommand{\LSC}{\textsc{LSC}}
\newcommand{\bmcP}{\overline{\mathcal{P}}}

\def\bbar{\overline}
\def\brho{\bbar \rho}

\def\hx{{\hat x}}

\def\hf{{\hat f}}

\def\hX{\hat X}
\def\hY{\hat Y}

\def\bH{\overline H}

\def\tf{{\tilde f}}

\def\hu{\hat u}

\def\mcA{{\mathcal A}}
\def\mcC{{\mathcal C}}
\def\mcD{{\mathcal D}}
\def\mcE{{\mathcal E}}

\def\mcK{{\mathcal K}}
\def\mcL{{\mathcal L}}

\def\mcM{{\mathcal M}}

\def\mcO{{\mathcal O}}
\def\mcP{{\mathcal P}}
\def\mcR{{\mathcal R}}
\def\mcS{{\mathcal S}}
\def\mcX{{\mathcal X}}
\def\mcY{{\mathcal Y}}
\def\bmcO{\bar{{\mathcal O}}}

\def\bDf{\overline{D_f}}

\def\a{\alpha}

\def\<{\langle}
\def\>{\rangle}

\def \pol{{\mathbf{pol}}}
\newcommand\ind[1]{\mathds{1}_{#1}}
\newcommand{\Dx}{{\Delta x}}
\newcommand{\PhDx}{\Pi^h_\Dx}

\makeatletter
 \@addtoreset{equation}{section}
 \makeatother
 
%--------------------------------------------------------

\allowdisplaybreaks
\usepackage{appendix}
\usepackage{hyperref}

%\defm{\scriptstyle{M}}

\title[Some PDE results in Heston model with applications]{Some PDE results in Heston model with applications}
\date{\today}
\author{Edoardo Lombardo}
\address{Edoardo Lombardo, Universit\`a degli Studi di Roma Tor Vergata, Rome, Italy. }
\email{edoardolombardo92@gmail.com}

%\subjclass[2010]{60H35, 91G60, 65C30, G5C05}

\keywords{Degenerate parabolic PDE, Viscosity solutions, Heston model, Weak approximation schemes}

\begin{document}

\begin{abstract}
  We present here some results for the PDE related to the logHeston model. We present different regularity results and prove a verification theorem that shows that the solution produced via the Feynman-Kac Theorem is the unique viscosity solution for a wide choice of initial data (even discontinuous) and source data. In addition, our techniques do not use Feller's condition at any time.
  In the end, we prove a convergence theorem to approximate this solution by means of a hybrid (finite differences/tree scheme) approach. 
\end{abstract}

\maketitle
%%%%%%%%%%%%%%%%%%%%%%%%%%%%%%%%%%%%%%%%%%%%%%%%%%%%%%%%%%%
%%%%%%%%%%%%%%%%%%%%%%%%%%%%%%%%%%%%%%%%%%%%%%%%%%%%%%%%%%%

\section{Introduction}
The stochastic volatility model proposed by Heston in \cite{Heston} is one of the most famous and used models in mathematical finance. It describes the evolution of the price of an asset $S$ and its instantaneous volatility $Y$, according to the following couple of stochastic differential equations
\begin{equation}\label{Heston_model_sde_intro}
  \begin{split}
 dS_t &= (r-\delta)S_t dt+  S_t\sqrt{Y_t} (\rho dW_t + \brho dB_t),\\
 dY_t &= (a-bY_t)dt + \sigma\sqrt{Y_t}dW_t,
  \end{split}
\end{equation}
where $r\in\R$, $\delta\ge0$, $a,b,\sigma>0$, $\rho\in(-1,1)$, $\brho=\sqrt{1-\rho^2}$, $(x,y)\in\R\times[0,\infty)$ and $(W,B)$ is a standard 2-dimensional Brownian motion.
In order to study and discretize the asset $S$, it is useful to consider the logHeston diffusion obtained by applying the transformation $(s,y)\mapsto(\log(s),y)$ to the asset and the volatility. To this purpose, we consider a slightly general model from which we can recover the logHeston by a precise choice of the parameters:
\begin{equation}\label{logHeston_model_sde_intro}
  \begin{split}
 dX_t&= (c+d Y_t)dt+\lambda\sqrt{Y_t}(\rho dW_t + \brho dB_t)\\
 dY_t&= (a-bY_t)dt+\sigma\sqrt{Y_t}dW_t,
  \end{split}
  \end{equation}
where $b,c,d\in\R$, $a,\lambda,\sigma>0$ $\rho\in(-1,1)$. Indeed, one can show that the price of financial derivatives written on $(X,Y)$, or equivalently on $(X,Y)$, satisfies a peculiar parabolic PDE that is degenerate, i.e. the matrix of the second-order derivative coefficients fails to be strictly positive definite when the boundary $\{y=0\}$ is attained. 
In this case, the classical existence and uniqueness results using the uniform ellipticity property fail, and an ad hoc method must be found to prove them.

The literature presents various existence and uniqueness results derived from analyzing the Heston and logHeston PDEs. Ekström and Tysk \cite{ET2010} examined a PDE arising from a generalized Heston model. While their model employs more general drift and volatility functions, these retain key characteristics of the original ones, such as positive drift of the volatility process when $Y_t=0$ and sufficient regularity of the squared volatility function. Within their model, they establish a verification theorem and a uniqueness result contingent upon certain mild assumptions on the payoff function $f$.
Costantini et al. groundbreaking study in \cite{CPD12} establish the existence and uniqueness of a viscosity solution for a PDE encompassing a wide array of jump-diffusion models, including Heston, applicable to both European and Asian options. Notably, when applied to the Heston model, their innovative approach necessitates a condition akin to the Feller condition (i.e. $\sigma^2\le 2a$), ensuring the volatility process remains strictly positive. Consequently, their result cannot cover the full range of parameters in the Heston case.
For numerical reasons, Briani et al. \cite{BCT} need regularity results for functional of the diffusion $(X^{t,x,y},Y^{t,y})$, representing the solution of \eqref{logHeston_model_sde_intro} starting from $(x,y)$ at time $t\in[0,T)$. This is important because the expectation of such functionals gives the price of European options.
In order to do that, they prove first a verification result (cf. \cite[Lemma 5.7]{BCT}) for the logHeston PDE \eqref{reference_PDE_generalset}: under appropriate regularity hypotheses on $f$ and $h$, the function
\begin{equation}\label{u_sol_gen_intro}
 u(t,x,y)=\E\bigg[e^{\varrho (T-t)}f(X^{t,x,y}_T,Y^{t,y}_T)-\int_t^Te^{\varrho (s-t)} h(s,X^{t,x,y}_s,Y^{t,y}_s)ds\bigg]
\end{equation}
is a solution of  
\begin{equation}\label{reference_PDE_generalset}
  \begin{cases}
    \partial_tu(t,x,y) +\mcL u(t,x,y) +\varrho u(t,x,y) = h(t,x,y),\quad   t\in[0,T), &(x,y)\in\mcO, \\
 u(T,x,y) = f(x,y), \quad  &(x,y)\in\mcO,
  \end{cases}
\end{equation}
where $\mcO=\R\times(0,\infty)$ and $\mcL$ is the infinitesimal generator of \eqref{logHeston_model_sde_intro}:
\begin{equation}\label{reference_infinit_gen}
\mcL = \frac{y}{2}(\lambda^2\partial^2_{x} + 2 \rho\lambda\sigma \partial_{x}\partial_{y} +  \sigma^2 \partial^2_{y}) +(c+dy) \partial_x + (a-by) \partial_y.
\end{equation}
Moreover, when the Feller condition $\sigma^2\le2a$ is satisfied, the uniqueness of the solution holds. \label{Page:Feller_condition_uniqueness} In fact, since the boundary $\R\times\{0\}$ is inaccessible to the process $(X,Y)$ under the Feller condition, the behaviour of $u$ if $\R\times\{y=0\}$ is irrelevant and one can achieve uniqueness of the solution. In second instance, they prove a stochastic representation for the derivatives of $u$. As a consequence of this result, and under specific conditions on final data $f$, they show that $u$ is regular enough to solve, by continuity, the problem even on $\overline{\mcO}=\R\times[0,\infty)$, so the PDE is satisfied even when volatility collapses in 0. This gives an additional boundary condition, giving an equation involving the function and its derivatives at the domain boundary. It is worth noting that this is called a Robin boundary condition.
Briani et al. did not establish uniqueness for the PDE over $\overline{\mcO}$, as their primary objective was different. 
%to demonstrate the regularity of the function $u$ for discretization purposes.

In this paper we restart from the logHeston setting of \cite{BCT} and study minimal hypotheses over $f$ and $h$ under which we can prove a verification theorem that characterizes $u$ in \eqref{u_gen_sol_intro} as the \textit{unique} solution of \eqref{reference_PDE_generalset} even when the Feller condition is not met ($\sigma^2>2a$).
Let us stress that, due to the connection between $u$ in \eqref{u_sol_gen_intro} and option prices problems, weaker requests on $f$ and $h$ translate into the choice of more general payoffs and $h$ running costs in finance.
First, we consider classical solutions. 
To this purpose, in Section \ref{classical_section}, after reviewing a slight extension of the regularity result obtained in \cite{BCT}, we characterize $u$ as the unique classical solution of \eqref{reference_PDE_generalset} over $\overline{\mcO}=\R\times[0,\infty)$ under relaxed hypotheses on $f$ and $h$. However, this requires that $f$ and $h$ have some differentiability conditions: merely continuity properties are not enough. In order to contour this difficulty, solutions in weak or viscosity sense are typically taken into account. Here, in Section \ref{viscosity_section}, we tackle the problem from a viscosity solutions point of view: we prove an existence and uniqueness result without the restriction of the Feller condition. We stress that the initial data may have some types of discontinuities, allowing us to deal with valuable financial examples such as Digital options.

We point out that, under the Feller condition, uniqueness results for classical, viscosity or weak solutions over $\mcO$ or $\overline{\mcO}$ have already been studied in the literature (see, e.g. \cite{BCT,CPD12,CMA17}), possibly requiring strong hypotheses on $f$ and $h$.
However, when the Feller condition does not hold, to the best of our knowledge, the literature is very poor on results concerning the uniqueness of classical and viscosity solutions (see, e.g. \cite{ET2010} for classical solutions). Thus, the main original contributions of this paper delve into this direction.

Finally, we deal with the convergence of a hybrid numerical method introduced in \cite{BCZ}.
If $f$ is smooth enough, the convergence rate has already been provided in Briani et al. \cite{BCT}, independently of the validity of the Feller condition. As the authors need the regularity of the price function, they strongly use classical solution results. Thus, their approach must keep the regularity of $f$. Here, by exploiting the tools and techniques introduced to get the results concerning viscosity solutions, we can prove the convergence of the above-cited hybrid numerical scheme whenever $f$ is continuous, see Theorem \ref{convergence_theorem}. The result of this theorem is confirmed empirically by the numerical experiment carried out in \cite{BCZ}, which computes the price of a European put option in the Heston model. Other numerical experiments that use the hybrid algorithm for the Bates and for the Heston-Hull-White models have been carried out in \cite{BCTZ} and \cite{BCZ2} respectively.

\section{Existence and uniqueness of classical solutions}\label{classical_section}
This section contains a slight improvement of some results proven in \cite{BCT} regarding the log-Heston PDE in which we add a uniqueness result inspired by \cite{ET2010}.

We start by introducing some notations. We set $\R_+=[0,\infty)$, $\R^*_+=(0,\infty)$ and name $\mcC^{q}(\R\times\R_+)$ the set of all functions on $\R\times\R_+$ which are $q$-times continuously differentiable. We set $\mcC_{\pol}^{q}(\R\times\R_+)$ the set of functions $g \in \mcC^{q}(\R\times\R_+)$ such that there exist $C, L>0$ for which
$$ 
|\partial_{x}^{\alpha}\partial_{y}^{\beta} g(x,y)| \leq C(1+|x|^{L}+y^{L}), \quad (x,y) \in \R\times\R_+,\; \alpha+\beta \leq q .
$$
For $T>0$, we set $\mcC_{\pol, T}^{q}(\R\times\R_+)$ the set of functions $v=v(t, x, y)$ such that $v \in \mcC^{\lfloor q / 2\rfloor, q}([0, T) \times (\R\times\R_+))$ and there exist $C, L>0$ for which
$$ 
\sup _{t \in[0, T)}|\partial_{t}^{k}\partial_{x}^{\alpha}\partial_{y}^{\beta}  v(t, y)| \leq C(1+|x|^{L}+y^{L}), \quad (x,y) \in \R\times\R_+,\; 2 k+\alpha+\beta \leq q. 
$$
We set $\mcC(\R\times\R_+)=\mcC^{0}(\R\times\R_+)$, $\mcC_{\pol}(\R\times\R_+)=\mcC_{\pol }^{0}(\R\times\R_+)$ and $\mcC_{\pol,T}(\R\times\R_+)=\mcC_{\pol,T}^{0}(\R\times\R_+)$. We also need another functional space, that we call $\mcC_{\pol}^{p, q}(\R, \R_+), p \in[1, \infty], q \in \N, m \in \N^{*}: g=g(x, y) \in \mcC_{\pol}^{p, q}(\R,\R_+)$ if $g \in \mcC_{\pol}^{q}( \R\times\R_+)$ and there exist $C, c>0$ such that
$$ 
|\partial_{x}^{\alpha} \partial_{y}^{\beta} g(\cdot, y)|_{L^{p}(\R, dx)} \leq C(1+|y|^{c}), \quad\alpha+\beta \leq q,
$$
where $|h|_{L^p,dx}=(\int_\R h(x)^pdx)^{1/p}$ if $p>1$, and the standard sup norm if $p=\infty$.
Similarly, as above, we set $\mcC_{\pol,T}^{p, q}(\R, \R_+)$ the set of the function $v \in \mcC_{\pol,T}^{q}(\R \times \R_+)$ such that
$$ 
\sup _{t \in[0, T)}|\partial_{t}^{k} \partial_{x}^{l'} \partial_{y}^{l} v(t, \cdot, y)|_{L^{p}(\R, dx)} \leq C(1+|y|^{c}), \quad 2 k+|l'|+|l| \leq q. 
$$

We call the solution of the logHeston SDE \eqref{logHeston_model_sde_intro}
\begin{align}\label{referenceDiffusion}
 X_T^{t,x,y} &= x + \int_t^T\big(c + d Y^{t,y}_s\big) ds + \int_t^T\lambda\rho\sqrt{Y^{t,y}_s} dW_s +\int_t^T\lambda\bar{\rho}\sqrt{Y^{t,y}_s} dB_s, \nonumber\\
 Y^{t,y}_T &= y + \int_t^T(a-bY^{t,y}_s)ds + \int_t^T\sigma\sqrt{Y^{t,y}_s}dW_s.
\end{align}
We define the candidate solution
\begin{equation}\label{u_gen_sol}
 u(t,x,y)=\E\bigg[e^{\varrho (T-t)}f(X^{t,x,y}_T,Y^{t,y}_T)-\int_t^Te^{\varrho (s-t)} h(s,X^{t,x,y}_s,Y^{t,y}_s)ds\bigg],
\end{equation}
to the reference PDE
\begin{equation}\label{reference_PDE}
  \begin{cases}
    \partial_tu(t,x,y) +\mcL u(t,x,y) +\varrho u(t,x,y) = h(t,x,y),\quad   t\in[0,T), &(x,y)\in\R\times\R_+, \\
 u(T,x,y) = f(x,y), \quad  &(x,y)\in\R\times\R_+,
  \end{cases}
\end{equation}
where $\mcL$ is defined in \eqref{reference_infinit_gen}.
One can remark that when $c=r-\delta$ (interest rate minus dividend rate) and $d=-\frac 12$, then $(X,Y)$ is the standard logHeston model for the log-price and volatility. When instead $\rho=0$, $c=r-\delta-\frac \rho{\sigma}a$ and $d=\frac \rho{\sigma}b-\frac 12$ and $\lambda = \brho$, we recover a formulation that will be useful to discretize the solution $u$ in Section \ref{approximation_section}.

In order to present the main contribution in this section, we present a slight extension of a regularity result presented in \cite{BCT}, that can be summarized Lemma \ref{lemma-reg} and Proposition \ref{prop-reg-new}.

\begin{lemma}\label{lemma-reg}
 Let $u$ be defined in \eqref{u_gen_sol}, with $f$ and $h$ such that, as $j=0,1$, $\partial_x^{2j}g\in C^{1-j}_{\pol}(\R\times\R_+)$,  $\partial_x^{2j}h\in \mathcal{C}^{1-j}_{\pol,T}(\R\times\R_+)$ along with $
 h$ and $\partial_y h$ locally Hölder continuous in $[0,T)\times\R\times\R_+^*$.
 Then 
  $\partial^{2j}_xu\in \mathcal{C}^{1-j}_{\pol,T}(\R\times\R_+)$ for $j=0,1$, and one has for 
  \begin{align}
    \partial^m_x u(t,x,y) &=\E\left[e^{\varrho (T-t)} \partial^m_x g(X^{t,x,y}_T,Y^{t,x,y}_T)-\int_t^Te^{\varrho (s-t)} \partial^m_x h(s,X^{t,x,y}_s,Y^{t,x,y}_s)ds  \right],\,\, m=1,2,\label{u-x-xx}\\
    \partial_y u(t,x,y) &=\E\bigg[e^{(\varrho-b) (T-t)} \partial_yg(X^{1,t,x,y}_T,Y^{1,t,x,y}_T)\nonumber \\
  &\qquad\qquad+\int_t^Te^{(\varrho-b) (s-t)} \Big[\frac \lambda2 \partial^2_xu+d\partial_xu-\partial_yh\Big](s,X^{1,t,x,y}_s,Y^{1,t,x,y}_s)ds  \bigg],\label{u-y}
  \end{align}
 where
  $(X^{1,t,x,y}_s,Y^{1,t,x,y}_s)$ solves  \eqref{logHeston_model_sde_intro} with new parameters   $\rho_1=\rho$, $c_1=c+\rho\lambda\sigma$, $d_1=d$, $\lambda_1=\lambda$,  $b_1=b$, $a_1=a+\frac{\sigma^2}{2}$, $\sigma_1=\sigma$. Furthermore, $v=\partial_y u$ is the unique solution to the following PDE
  \begin{equation}
    \begin{cases}
 \big[(\partial_t+\mcL_1+\varrho-b)v +(\lambda^2/2 \partial_x^2+d\partial_x)u-\partial_yh\big](t,x,y)=0,\quad  t\in[0,T), \!\!\!\!&(x,y)\in\R\times\R_+^*,  \\
 v(T,x,y) = \partial_y f(x,y), \quad  \!\!\!&(x,y)\in\R\times\R_+^*,
    \end{cases}
  \end{equation}
 where $\mcL_1=\frac{y}{2}(\lambda^2\partial^2_{x} + 2 \rho\sigma \partial_{x}\partial_{y} +  \sigma^2 \partial^2_{y}) +(c +\rho\lambda\sigma+d y) \partial_x + (a+\sigma^2/2 -by) \partial_y$.
\end{lemma}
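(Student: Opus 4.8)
The plan is to handle the two differentiations by genuinely different mechanisms, since $x$ enters the flow additively whereas $y$ enters through the degenerate CIR component.

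\textbf{The $x$-derivatives.} Because neither the drift $c+dY$ nor the diffusion coefficient $\lambda\sqrt{Y}$ of the $X$-equation in \eqref{referenceDiffusion} depends on $X$, and the $Y$-equation does not involve $X$, the flow is translation invariant in the first variable: $X^{t,x,y}_s=x+X^{t,0,y}_s$ with $Y^{t,y}_s$ unchanged. Substituting this into \eqref{u_gen_sol}, the variable $x$ enters only through the arguments of $f$ and $h$, so I would differentiate under the expectation. The difference quotients converge pointwise to $\partial_x^m f(X_T,Y_T)$ and $\partial_x^m h(s,X_s,Y_s)$ and are dominated using the polynomial growth of $\partial_x f,\partial_x^2 f,\partial_x h,\partial_x^2 h$ together with the local uniformity and finiteness of all polynomial moments of the CIR-type flow; dominated convergence then gives \eqref{u-x-xx} for $m=1,2$, and the same moment control plus continuity of $(t,y)\mapsto(X^{t,0,y},Y^{t,y})$ yields $\partial_x^{2j}u\in\mcC^{1-j}_{\pol,T}(\R\times\R_+)$ for $j=0,1$.

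\textbf{The PDE for $v=\partial_y u$.} Assuming provisionally that $u$ is smooth enough, a direct computation differentiating $\partial_t u+\mcL u+\varrho u=h$ in $y$ gives $\partial_y(\mcL u)=\mcL_1(\partial_y u)-b\,\partial_y u+\tfrac{\lambda^2}{2}\partial_x^2 u+d\,\partial_x u$, where the source $\tfrac{\lambda^2}{2}\partial_x^2 u$ and the drift $d\,\partial_x u$ come from hitting the $\tfrac{y}{2}\lambda^2\partial_x^2$ and $(c+dy)\partial_x$ terms of $\mcL$ with $\partial_y$, the $\rho\lambda\sigma$ shift in $c_1$ comes from the cross term $\tfrac{y}{2}2\rho\lambda\sigma\partial_x\partial_y$, and the $\tfrac{\sigma^2}{2}$ shift in $a_1$ comes from $\tfrac{y}{2}\sigma^2\partial_y^2$. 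Hence $v=\partial_y u$ formally solves the stated PDE with killing rate $\varrho-b$, terminal datum $\partial_y f$, and source $\tfrac{\lambda^2}{2}\partial_x^2 u+d\,\partial_x u-\partial_y h$, whose Feynman--Kac formula for the shifted model $(X^{1},Y^{1})$ is exactly \eqref{u-y}.

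\textbf{Justifying the representation and uniqueness.} The pathwise first variation $\partial_y Y^{t,y}_s$ solves a linear SDE with diffusion coefficient $\tfrac{\sigma}{2}Y_s^{-1/2}$, which is singular at $\{y=0\}$ and not obviously integrable when Feller's condition fails, so I would not differentiate the flow directly. Instead I would use the dimension-shift intertwining for the CIR semigroup, $\partial_y P^{(a)}_\tau=e^{-b\tau}P^{(a+\sigma^2/2)}_\tau\,\partial_y$, which both produces the factor $e^{-b(T-t)}$ and raises $a$ to $a+\tfrac{\sigma^2}{2}$; combined with the translation argument for the $X$-coupling this yields \eqref{u-y} rigorously, the identification of the limit with $\partial_y u$ being controlled by the polynomial-moment and flow-continuity estimates together with the growth of the already-established $\partial_x^2 u,\partial_x u\in\mcC^0_{\pol,T}$. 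For uniqueness the key structural observation is that $a_1=a+\tfrac{\sigma^2}{2}$ always satisfies Feller's condition for the shifted volatility ($2a_1=2a+\sigma^2\ge\sigma^2=\sigma_1^2$), so $\{y=0\}$ is inaccessible to $Y^{1}$ regardless of whether the original process satisfies Feller; the standard Feynman--Kac/verification uniqueness argument, which needs no boundary condition precisely because the boundary is not reached, then forces any solution of the $v$-PDE in the polynomial class to coincide with \eqref{u-y}.

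\textbf{Main obstacle.} The crux is the $y$-differentiation at the degenerate boundary without Feller's condition: since the naive first variation is singular at $\{y=0\}$, the entire argument rests on justifying the dimension-shift representation and on the uniform estimates (polynomial moments and boundary control) needed to pass to the limit and to guarantee that the source $\tfrac{\lambda^2}{2}\partial_x^2 u+d\,\partial_x u-\partial_y h$ produces a finite, continuous $\partial_y u$ up to $\{y=0\}$; once the shifted model is in place, the fact that it automatically satisfies Feller's condition makes the remaining regularity and uniqueness bookkeeping routine.
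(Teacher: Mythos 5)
You have correctly identified the two mechanisms the paper relies on: the paper itself omits this proof and defers to \cite{BCT} (``the exact proofs presented in [BCT], with little changes due to the presence of the source term $h$''), and the proof there is indeed built on translation invariance in $x$ and on the CIR dimension shift $a\mapsto a+\sigma^2/2$ with the factor $e^{-b(T-t)}$; your observation that the shifted process automatically satisfies Feller's condition ($2a_1=2a+\sigma^2>\sigma_1^2$), so that uniqueness for the $v$-equation needs no boundary condition on $\{y=0\}$, is also exactly the structural point that makes the last part of the lemma work. The $x$-derivative argument and the formal computation $\partial_y(\mcL u)=\mcL_1\partial_y u-b\,\partial_y u+\tfrac{\lambda^2}{2}\partial_x^2u+d\,\partial_xu$ are correct as written.

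The genuine gap is in the step you label ``Justifying the representation.'' The intertwining $\partial_y P^{(a)}_\tau=e^{-b\tau}P^{(a+\sigma^2/2)}_\tau\partial_y$ that you invoke is a statement about the \emph{one-dimensional} CIR semigroup, whereas \eqref{u-y} concerns the joint semigroup of $(X,Y)$, in which $X^{t,x,y}$ depends on $y$ through both its drift $dY_s\,ds$ and its noise $\lambda\sqrt{Y_s}(\rho\,dW_s+\brho\,dB_s)$. Differentiating in $y$ therefore also hits the first argument of $f$, which is precisely why \eqref{u-y} is not an intertwining identity but a Duhamel-type formula carrying the source $(\tfrac{\lambda^2}{2}\partial_x^2+d\,\partial_x)u$ along the shifted path, and why the drift gets shifted to $c_1=c+\rho\lambda\sigma$. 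Neither of your two rigorous tools (translation invariance; the marginal CIR intertwining) can produce that source term or that drift shift: you obtain them only from the formal differentiation of the PDE, and that computation presupposes that $\partial_y u$ exists, is $\mcC^{1,2}$ in the interior, and extends continuously with polynomial growth up to the degenerate boundary --- which is exactly the assertion of the lemma. This is a circularity, and it sits at the one point where Feller's failure makes the problem hard. To close it one needs a substantive additional argument: for instance, eliminate $\int_t^T\sqrt{Y_s}\,dW_s$ via the $Y$-equation so that the $y$-dependence of $X_T$ becomes explicit plus a functional of the $Y$-path, and then prove a path-level (not merely marginal) dimension-shift identity --- this is what the proof in \cite{BCT}, based on the explicit noncentral chi-square structure of the CIR transition law, actually does; alternatively one must establish an a priori bound on $\partial_y u$ near $\{y=0\}$ strong enough to run the interior-regularity-plus-Feynman--Kac-uniqueness identification. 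Your sketch names this as the ``main obstacle'' but does not supply either mechanism, so as it stands the central claim of the lemma is asserted rather than proven.
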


Iterating this Lemma, it is possible to prove the following result.

\begin{prop} \label{prop-reg-new}
 Let $q\in\N$. For every $j=0,1,\ldots,q$, $\partial_x ^{2j}f\in \mcC^{q-j}_{\pol}(\R\times\R_+)$, $\partial_x ^{2j}h\in \mcC^{q-j}_{\pol, T}(\R\times\R_+)$, and for all $(m,n)\in\N^2$ such that $m+2n\le 2q$ and $m\le 2(q-1)$, $\partial^m_x\partial^n_y h$ locally Hölder continuous in $[0,T)\times\R\times\R_+^*$.
 Let $u$ as in \eqref{u_gen_sol}.
  
 Then $\partial_x ^{2j}u\in \mcC^{q-j}_{\pol, T}(\R\times\R_+)$ for every $j=0,1,\ldots,q$. %, that is, $\partial^{l}_t\partial^m_x\partial^n_y u\in C^{p,0}_{\pol,T}(\R, \R_+)$ for every $l,m,n$ such that $2l+m+n\leq q$. 
 Moreover, the following stochastic representation holds: for $m+2n\leq 2q$,
  \begin{equation}\label{stoc_repr-new_chap4}
  \begin{split}
  &\partial^{m}_x\partial^{n}_yu(t,x,y)=\E\left[e^{(\varrho-nb) (T-t)} \partial^m_x\partial^n_yf(X^{n,t,x,y}_T,Y^{n,t,x,y}_T)\right]\\
  &\quad+
 \E\left[\int_t^Te^{(\varrho-nb) (s-t)}\left[n\Big(\frac \lambda 2 \partial^{m+2}_x\partial^{n-1}_yu+d\partial^{m+1}_x\partial^{n-1}_yu\Big)-\partial^{m}_x\partial^n_y h\right](s,X^{n,t,x,y}_s,Y^{n,t,x,y}_s)ds  \right],
  \end{split}
  \end{equation}
 where $\partial^{m}_x\partial^{n-1}_y u:=0$ when $n=0$ and $(X^{n,t,x,y},Y^{n,t,x,y})$, $n\geq 0$, denotes the solution starting from $(x,y)$ at time $t$ to the SDE \eqref{logHeston_model_sde_intro} with parameters
  \begin{equation}\label{parameters-new_chap4}
  \rho_n=\rho,\quad c_n=c+n\rho\lambda\sigma,\quad d_n=d,\quad \lambda_n=\lambda,\quad a_n=a+n\frac{\sigma^2}{2}, \quad b_n=b,\quad \sigma_n=\sigma.
  \end{equation}
 In particular, if $q\geq 2$ then $u\in \mcC^{1,2}([0,T]\times \R\times\R_+)$,  solves the PDE
  \begin{equation}\label{PDE-closed}
  \begin{cases}
  \partial_t u(t,x,y)+ \mcL u(t,x,y) +\varrho u(t,x,y)= h(t,x,y),\qquad& t\in [0,T), \,(x,y)\in \R\times\R_+,\\
 u(T,x,y)= f(x,y),\qquad& (x,y)\in \R\times\R_+.
  \end{cases} 
  \end{equation}
\end{prop}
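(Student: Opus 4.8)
The statement follows by induction on the number $n$ of $y$-derivatives, feeding Lemma \ref{lemma-reg} into itself. The plan rests on the observation that Lemma \ref{lemma-reg} does two things simultaneously. First, it trivialises $x$-differentiation: since $x$ enters \eqref{logHeston_model_sde_intro} only through the additive initial condition, one has $\partial_x X^{t,x,y}_s=1$ and all higher $x$-derivatives of $X$ vanish, so $\partial_x^m u$ is obtained by differentiating \eqref{u_gen_sol} under the expectation. Second, it exhibits $\partial_y u$ as the Feynman--Kac functional \eqref{u-y} of a \emph{new} logHeston diffusion with the once-shifted parameters (the case $n=1$ of \eqref{parameters-new_chap4}), discount $\varrho-b$, terminal datum $\partial_y f$, and a source term assembled from the already-controlled quantities $\partial_x^2u,\partial_x u$ and $\partial_y h$. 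Because the shifted SDE is again of logHeston type, the Lemma applies verbatim to $\partial_y u$. For the base layer $n=0$ I would differentiate \eqref{u_gen_sol} in $x$ up to order $m\le 2q$, justified by dominated convergence from the polynomial bounds on $\partial_x^m f,\partial_x^m h$ and the polynomial moment bounds of $(X^{t,x,y},Y^{t,y})$; this is precisely \eqref{stoc_repr-new_chap4} with $n=0$.

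For the inductive step, suppose \eqref{stoc_repr-new_chap4} and the stated regularity hold for all orders with at most $n$ $y$-derivatives. I would set $v:=\partial_y^n u$ and, via the induction hypothesis, read it off as the Feynman--Kac solution \eqref{u_gen_sol} attached to the diffusion $(X^{n},Y^{n})$ of parameters \eqref{parameters-new_chap4}, discount $\varrho-nb$, terminal datum $\partial_y^n f$ and source term
\[
\tilde{h}_n:=\partial_y^n h-n\Big(\tfrac{\lambda}{2}\partial_x^2+d\partial_x\Big)\partial_y^{\,n-1}u .
\]
Applying Lemma \ref{lemma-reg} to $v$ then delivers at once the $x$-derivatives $\partial_x^m v=\partial_x^m\partial_y^n u$ and the $y$-derivative $\partial_y v=\partial_y^{n+1}u$; the latter is the functional of the once-more shifted diffusion $(X^{n+1},Y^{n+1})$ with discount $(\varrho-nb)-b=\varrho-(n+1)b$. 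A short computation, substituting $\partial_y\tilde{h}_n=\partial_y^{n+1}h-n(\tfrac{\lambda}{2}\partial_x^2+d\partial_x)\partial_y^n u$ into the $\mcL_1$-correction of \eqref{u-y}, collapses the two contributions into the single coefficient $(n+1)$, which is exactly the level-$(n+1)$ form of \eqref{stoc_repr-new_chap4}. The parameter increments $a_n\mapsto a_n+\sigma^2/2$ and $c_n\mapsto c_n+\rho\lambda\sigma$ supplied by the Lemma are precisely those of \eqref{parameters-new_chap4}, and the polynomial growth of the new layer is read off from its integrand as in the base case.

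The main obstacle is the regularity bookkeeping, namely verifying at each step that the triple $(v,\partial_y^n f,\tilde{h}_n)$ satisfies the hypotheses of Lemma \ref{lemma-reg}. This is exactly where the weighted budget $m+2n\le 2q$ and the paired assumptions $\partial_x^{2j}f,\partial_x^{2j}h\in\mcC^{q-j}$ are spent: a single $y$-derivative consumes \emph{two} units of $x$-regularity, because $\tilde{h}_n$ and the derivative $\partial_y\tilde{h}_n$ demanded by the Lemma involve $\partial_x^2\partial_y^{\,n-1}u$ and $\partial_x^3\partial_y^{\,n-1}u$, which are available with the correct polynomial growth precisely while $n$ remains within the budget. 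The local Hölder continuity of $\tilde{h}_n$ and $\partial_y\tilde{h}_n$ required by the Lemma comes from that of $\partial_x^m\partial_y^n h$ in the hypothesis (the restriction $m\le 2(q-1)$ matching exactly the Hölder regularity needed, since the top pure $x$-derivatives $m=2q$ never enter a $y$-differentiation step), together with the $\mcC^{1,2}$-type smoothness of the $u$-derivatives already constructed.

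Finally, for $q\ge 2$ the budget $m+2n\le 2q$ covers every spatial derivative appearing in $\mcL$ of \eqref{reference_infinit_gen}, so $u$ and its spatial derivatives up to order two are continuous on $\R\times\R_+$ up to the boundary $\{y=0\}$. Applying Itô's formula to $s\mapsto e^{\varrho(s-t)}u(s,X^{t,x,y}_s,Y^{t,y}_s)$ on the open set $\mcO$ and taking expectations shows that $u$ solves \eqref{PDE-closed} on $\R\times\R_+^*$; setting $\partial_t u:=h-\mcL u-\varrho u$ exhibits $\partial_t u$ as continuous up to $\{y=0\}$, whence $u\in\mcC^{1,2}([0,T]\times\R\times\R_+)$, and by continuity \eqref{PDE-closed} extends to all of $\R\times\R_+$, including the degenerate boundary.
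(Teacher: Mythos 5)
Your proposal is correct and follows essentially the same route as the paper: the paper omits the proof, stating only that the result is obtained by iterating Lemma \ref{lemma-reg} (with the details as in \cite{BCT}, adjusted for the presence of the source term $h$), and your induction on the number of $y$-derivatives --- reading $\partial_y^n u$ as the Feynman--Kac functional of the level-$n$ shifted diffusion with terminal datum $\partial_y^n f$, discount $\varrho-nb$ and source $\tilde{h}_n=\partial_y^n h-n\big(\tfrac{\lambda}{2}\partial_x^2+d\partial_x\big)\partial_y^{n-1}u$, then feeding it back into the Lemma --- is precisely that iteration. Your algebra (the collapse of the $\mcL_1$-correction to the coefficient $n+1$, the parameter shifts, and the derivative-budget accounting behind $m+2n\le 2q$ and $m\le 2(q-1)$) is consistent with \eqref{stoc_repr-new_chap4} and \eqref{parameters-new_chap4}.
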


\begin{remark}
 For discretization purposes, as done in Briani et al. \cite{BCT}, one can consider an $L^p$ property for $x\mapsto u(t,x,y)$, and similarly for the derivatives. In this case, one can reformulate Proposition \ref{prop-reg-new} as follows. Let $p\in [1, \infty]$, $q\in\N$. For every $j=0,1,\ldots,q$, $\partial_x ^{2j}f\in \mcC^{p,q-j}_{\pol}(\R,\R_+)$, $\partial_x ^{2j}h\in \mcC^{p,q-j}_{\pol, T}(\R,\R_+)$, and for all $(m,n)\in\N^2$ such that $m+2n\le 2q$ and $m\le 2(q-1)$, $\partial^m_x\partial^n_y h$ locally Hölder continuous in $[0,T)\times\R\times\R^*_+$.
 Then $\partial_x ^{2j}u\in \mcC^{p,q-j}_{\pol, T}(\R,\R_+)$ for every $j=0,1,\ldots,q$. Moreover, the stochastic representation \eqref{stoc_repr-new} holds and, if $q\geq 2$, $u$ solves PDE \eqref{PDE-closed}.
\end{remark}

It is possible to prove these results with the exact proofs presented in \cite{BCT}, with little changes due to the presence of the source term $h$, so we omit the proofs here.

One could be interested to see if we can ask for less regular $f$ and $h$ and still have existence of a classical solution and in which case the solution is unique. In order to do that we first fix other notations that will be required in what follows.

Let $T>0$ and a convex domain $\mcD\subset[0,T]\times\R^{m}$ and $P_1=(t_1,z_1),P_2=(t_2,z_2)\in\mcD$ we define the ``parabolic'' distance $d_\mcP:\mcD\times\mcD\rightarrow\R_+$ as
\begin{equation}\label{parabolic_distance}
 d_\mcP(P_1,P_2) = \big(|t_1-t_2|+|z_1-z_2|^2\big)^{1/2}.
\end{equation}
Let $v:\mcD\rightarrow\R$, using the notation $|v|^\mcD_0 = \sup_{P\in \mcD} |v(P)|$, we introduce the following notation of the $\alpha$-Hölder norm. For $\alpha\in(0,1)$:
\begin{equation}\label{alfa_holder_norm}
 \overline{|v|}^\mcD_\a = |v|^\mcD_0 + \bH^{\mcD}_\a(v),\quad \text{ where } \quad \bH^{\mcD}_\a(v) = \sup_{P\neq Q\mid P,Q\in \mcD}\frac{|v(P)-v(Q)|}{d_\mcP(P,Q)^\a} 
\end{equation}
We will say that $v$ is $\alpha$-Hölder for the parabolic distance if $\bH^{\mcD}_\a(u)<\infty$ that is equivalent to say that $v=v(t,z)$ is $\alpha/2$-Hölder in $t$ and $\alpha$-Hölder in $z$.
We define the $(2+\alpha)$-Hölder norm 
\begin{equation}
 \overline{|v|}^\mcD_{2+\a} =\overline{|v|}^\mcD_\a  +\overline{|\partial_t v|}^\mcD_\a  +\sum_{1\le |l|\le 2} \overline{|\partial^l_z v|}^\mcD_\a.
\end{equation}
To define the weighted $\alpha$-Hölder norm, we must first introduce the notion of weight.
We call for $\tau>0$ and $Q_i=(\tau_i,z)$ $i=1,2$
\begin{equation}\label{weights_for_norm}
  \partial\mcD_\tau=\{(t,z)\in \partial\mcD  \mid t\in[0,\tau]\},\quad d_{Q_i} = \inf_{P\in\partial\mcD_{\tau_i}}d_\mcP(Q_i,P) \text{ and } d_{Q_1,Q_2}=\min(d_{Q_1},d_{Q_2}),
\end{equation}
where $d_{Q_i}$, $i\in\{1,2\}$, measures the parabolic distance of $Q_i$ from the boundary $\partial\mcD_{\tau_i}$. 
Similarly to \eqref{alfa_holder_norm}, for $m\in\N$, $\alpha\in(0,1)$ we define
\begin{equation}\label{weighted_alfa_holder_norm_1}
 |v|^\mcD_{\a,m} = |v|^\mcD_{0,m} + H^{\mcD}_{\a,m}(v),
\end{equation}
where
\begin{equation}\label{weighted_alfa_holder_norm_2}
 |v|^\mcD_{0,m}= \sup_{P\in \mcD} d_{P}^m |v(P)|, \quad H^{\mcD}_{\a,m}(v) = \sup_{P\neq Q\mid P,Q\in \mcD}d_{P,Q}^{m+\a}\frac{|v(P)-v(Q)|}{d_\mcP(P,Q)^\a}.
\end{equation}
With the notation $|v|^\mcD_\a=|v|^\mcD_{\alpha,0}$ weighted $(2+\alpha)$-Hölder norm is as follows
\begin{equation}
 |v|^\mcD_{2+\a} =|v|^\mcD_\a  + |\partial_t v|^\mcD_{\a,1}  +\sum_{1\le |l|\le 2} |\partial^l_z v|^\mcD_{\a,|l|},
\end{equation}
where $l=(l_1,\ldots,l_m)$, $\partial^l_z=\partial^{l_1}_{z_1}\cdots \partial^{l_m}_{z_m}$ and $|l|=\sum_{i=1}^m l_i$.
The main difference between the standard and the weighted $\alpha$-Hölder norm is that the latter allows explosions for the derivatives of $v$ and the difference $|v(P)-v(Q)|$ when we evaluate them near the boundary.

Now we can state the following result,  which clarifies the behavior of the second order spatial derivatives of the solution $u$ near the spatial boundary $\R\times\{0\}$, in which we place hypotheses slightly stronger than the ones in Lemma \ref{lemma-reg}.

\begin{prop}\label{min_hp_pde_existence_sol}
 Let $u$ as in \eqref{u_gen_sol}, $f$ and $h$ such that, for $j=0,1$, $\partial_x^{2j}f\in \mcC^{1-j}_{\pol}(\R\times\R_+)$,  $\partial_x^{2j}h\in \mcC^{1-j}_{\pol,T}(\R\times\R_+)$. Furthermore, we take $|h|^K_{\alpha,2}$, $|\partial_y h|^K_{\alpha,2}<\infty$, for all $K$ convex compact set contained in $[0,T)\times\R\times\R_+$.
Then, for all $t_0\in(0,T)$ and $x_0\in\R$ one has
\begin{equation}\label{0lims_dyyu_dxyu}
 \lim_{(t,x,y)\rightarrow (t_0,x_0,0)} y \partial^2_y u(t,x,y)=0 \quad \text{ and } \quad \lim_{(t,x,y)\rightarrow (t_0,x_0,0)} y \partial_x\partial_y u(t,x,y)=0.
\end{equation}
\end{prop}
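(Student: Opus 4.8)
The plan is to reduce the statement to a boundary estimate for the single function $v:=\partial_y u$, for which Lemma~\ref{lemma-reg} already supplies a Heston-type equation. By that lemma, under the present (slightly stronger) hypotheses $v$ is a classical solution on $[0,T)\times\R\times\R_+^*$ of
\begin{equation*}
(\partial_t+\mcL_1+\varrho-b)v=\phi,\qquad \phi:=-\Big(\tfrac{\lambda^2}{2}\partial_x^2+d\partial_x\Big)u+\partial_y h,
\end{equation*}
where $\mcL_1$ is the (again degenerate, Heston-type) generator of Lemma~\ref{lemma-reg}. The two objects to be controlled in \eqref{0lims_dyyu_dxyu} are exactly the first spatial derivatives of $v$, namely $\partial_y v=\partial_y^2u$ and $\partial_x v=\partial_x\partial_y u$, multiplied by $y$; so it suffices to prove that $y\,\partial_x v$ and $y\,\partial_y v$ tend to $0$ as $(t,x,y)\to(t_0,x_0,0)$ with $t_0\in(0,T)$.

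First I would check that the source $\phi$ is admissible for an interior Schauder theory in the weighted Hölder spaces \eqref{weighted_alfa_holder_norm_1}--\eqref{weighted_alfa_holder_norm_2}: on every convex compact $K\subset[0,T)\times\R\times\R_+$ the term $\partial_y h$ has finite weighted norm $|\partial_y h|^K_{\alpha,2}$ by hypothesis, while $\partial_x u,\partial_x^2 u\in\mcC^{1-j}_{\pol,T}(\R\times\R_+)$ are bounded and locally Hölder on $K$, so $|\phi|^K_{\alpha,2}<\infty$, and likewise $|v|^K_0<\infty$ from the regularity of $u$. Applying the interior weighted Schauder estimate for the degenerate operator $\partial_t+\mcL_1$ (of the kind underlying the regularity theory in \cite{BCT,ET2010}) then yields $|v|^K_{2+\alpha}<\infty$. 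In particular the weight-$1$ part of this norm controls the first derivatives of $v$, i.e.\ $\sup_{P\in K}d_P\,|\partial_x v(P)|<\infty$ and $\sup_{P\in K}d_P\,|\partial_y v(P)|<\infty$. Since $t_0\in(0,T)$ is an interior time, for $P=(t,x,y)$ close to $(t_0,x_0,0)$ the boundary distance satisfies $d_P\asymp y$, whence $y\,|\partial_x\partial_y u|$ and $y\,|\partial_y^2 u|$ stay bounded near $(t_0,x_0,0)$.

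The remaining, and genuinely delicate, point is to promote these bounds to the \emph{vanishing} of the limits; this is the main obstacle. The natural strategy is a blow-up argument adapted to the degeneracy: fixing $(t_0,x_0,0)$ and the Heston-type scaling $(t,x,y)\mapsto(t_0+c\,\tau,\,x_0+\sqrt{c}\,\xi,\,c\,\eta)$, one rescales $v$ and lets $c\downarrow0$. The uniform weighted bounds of the previous step provide the compactness, and the scaling makes the mixed and pure-$x$ second-order terms subcritical, so that in the limit only the one-dimensional degenerate principal part $\tfrac{\sigma^2}{2}\eta\,\partial_\eta^2+(a+\tfrac{\sigma^2}{2})\partial_\eta-\partial_\tau$ survives; its bounded limiting profile forces the traces of $y\partial_y^2u$ and $y\partial_x\partial_y u$ to be $0$. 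Heuristically, were the $x$-coupling switched off, the $y$-direction would obey the balance $\tfrac{\sigma^2}{2}y\,\partial_y(\partial_y^2u)+(a+\tfrac{\sigma^2}{2})\,\partial_y^2u=\phi+o(1)$, whose bounded solution (via the integrating factor $y^{\,2a/\sigma^2+1}$) has a finite boundary limit and hence yields $y\partial_y^2u\to0$. The actual difficulty is that the mixed and pure-$x$ second-order terms are themselves only $O(1/y)$ near $\{y=0\}$, so the two limits in \eqref{0lims_dyyu_dxyu} are \emph{coupled} and must be shown to vanish simultaneously; it is precisely to justify this passage to the limit — i.e.\ to secure the equicontinuity of $\phi$ and of the frozen coefficients, uniformly in $x$ near $x_0$ — that the weighted Hölder hypotheses $|h|^K_{\alpha,2},\,|\partial_y h|^K_{\alpha,2}<\infty$ are imposed.
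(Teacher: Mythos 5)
Your first step (reducing everything to $v=\partial_y u$ and its equation from Lemma \ref{lemma-reg}) matches the paper, but after that there are two genuine gaps. First, the bound $|v|^K_{2+\alpha}<\infty$ on compact sets $K$ that touch $\{y=0\}$ cannot be obtained by invoking an ``interior weighted Schauder estimate for the degenerate operator $\partial_t+\mcL_1$'': the Schauder estimates actually available (Friedman, Theorem 5, Sec.~2, Chap.~3, which is what the paper uses) require uniform parabolicity, and $\mcL_1$ has ellipticity constant proportional to $y$, which degenerates exactly on the set you want your estimate to cover. No off-the-shelf weighted estimate valid up to $\{y=0\}$ exists in the references you appeal to; obtaining constants that do not blow up near the degenerate boundary is the whole difficulty of the proposition. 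The paper resolves it by an \emph{isotropic} rescaling $\chi_n(t,x,y)=(m_n(t-t_n),m_n(x-x_n),m_ny)$ adapted to points with $y_n\in(1/m_n,2/m_n)$: because $y$ multiplies all the second-order coefficients, this zoom maps the thin slab $\{1/(2m_n)\le y\le 4/m_n\}$ onto the fixed box $\zeta\in[1/2,4]$, on which the rescaled equation is uniformly parabolic with constants independent of $n$ and with source term multiplied by $1/m_n$; only then is the Schauder interior estimate applied.

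Second, even granting boundedness of $y\partial_y^2u$ and $y\partial_x\partial_yu$, your route from boundedness to the \emph{vanishing} of the limits is only heuristic, as you yourself concede. Your parabolic scaling $(t,x,y)\mapsto(t_0+c\,\tau,x_0+\sqrt{c}\,\xi,c\,\eta)$ keeps the degenerate boundary inside the blow-up domain, so the limiting problem is again degenerate, and you would need both equicontinuity up to $\{\eta=0\}$ (which interior estimates never give) and a Liouville-type classification of bounded solutions of the limiting degenerate operator; neither is established, and the coupling between the two limits that you flag remains unresolved. The paper's decisive observation, which your proposal never uses, is that $v=\partial_y u$ is \emph{already continuous up to} $\{y=0\}$, since Lemma \ref{lemma-reg} gives $\partial_y u\in\mcC^{0}_{\pol,T}(\R\times\R_+)$. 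Hence the rescaled functions $w_n=v\circ\chi_n^{-1}$ converge uniformly to the constant $v(t_0,x_0,0)$; combined with the uniform Hölder bounds and Ascoli--Arzelà, the rescaled gradients $\partial_\eta w_n$ and $\partial_\zeta w_n$ must converge uniformly to zero, and unwinding the scaling yields exactly $y\,\partial_x\partial_yu\to0$ and $y\,\partial_y^2u\to0$. In other words, vanishing comes from the constancy of the blow-up limit, not from any structure of a limiting equation, and this is the idea missing from your argument.
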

\begin{proof}
 The proof takes inspiration from \cite{ET2010}. Under these hypotheses, as shown in Lemma \ref{lemma-reg}, we know that $v=\partial_y u$ solves 
  \begin{equation}
    \begin{cases}
 \big[(\partial_t+\mcL_1+\varrho-b)v +(\lambda^2/2 \partial_x^2+d\partial_x)u-\partial_yh\big](t,x,y)=0,\quad  t\in[0,T), \!\!\!\!&(x,y)\in\R\times\R_+^*,  \\
 v(T,x,y) = \partial_y f(x,y), \quad  \!\!\!&(x,y)\in\R\times\R_+^*,
    \end{cases}
  \end{equation}
 where $\mcL_1=\frac{y}{2}(\lambda^2\partial^2_{x} + 2 \rho\sigma \partial_{x}\partial_{y} +  \sigma^2 \partial^2_{y}) +(c +\rho\lambda\sigma+d y) \partial_x + (a+\sigma^2/2 -by) \partial_y$. 
 We consider, now, a sequence $(t_n,x_n,y_n)_{n \in{\N^*}}\subset [0,T)\times\R\times\R^*_+$ converging to $(t_0,x_0,0)$, where $t_0\in[0,T)$ and $x_0\in\R$. By the convergence $y_n\rightarrow0$, there exists $n_0$ such that for all $n\ge n_0$ $y_n\in(\frac{1}{m_n},\frac{2}{m_n})$, and $m_n\rightarrow\infty$ when $n\rightarrow\infty$. Then we define
  \begin{align*}
    %\chi_n:[0,T)\times\R\times\R^*_+ &\longrightarrow \big[-m_nt_0,k(T-t_0)\big)\times\R\times\R_+^* \\
    \chi_n:(t,x,y)&\longmapsto(s,\eta,\zeta)=(m_n(t-t_n),m_n(x-x_n),m_ny)
  \end{align*}
 and the functions $w_n$ as
  \begin{equation*}
 w_n(s,\eta,\zeta)= v(\chi_n^{-1}(s,\eta,\zeta))=v\Big(\frac{s}{m_n}+t_n, \frac{\eta}{m_n}+x_n, \frac{\zeta}{m_n}\Big).
  \end{equation*}
 One can check that $w_n$ satisfies the following PDE
  \begin{multline}\label{w_pde}
    \partial_s w_{n}+\frac{\zeta}{2}(\lambda \partial_\eta^2 +2\rho\lambda\sigma\partial_\eta \partial_\zeta+\sigma^2\partial_\zeta^2) w_{n} +(c+\rho\lambda\sigma+d\frac{\zeta}{{m_n}})\partial_\eta w_{n}\\
 +(a+\frac{\sigma^2}{2}+b\frac{\zeta}{{m_n}})\partial_\zeta w_{n} +(\varrho-b)w_{n} + \frac{1}{{m_n}} g_{n} =0,
  \end{multline}
 where
  $$
 g_{n}(s,\eta,\zeta)=(\lambda^2/2 \partial_x^2+d\partial_x)u(\chi_n^{-1}(s,\eta,\zeta)) + \partial_y h(\chi_n^{-1}(s,\eta,\zeta)).
  $$
 We also define 

  $$
 u_n = u\circ \chi^{-1}_n,\quad h_n= h \circ\chi^{-1}_n
  $$

 Then, we consider the rectangle $R_n =[t_n-\frac{2\delta}{m_n},t_n+\frac{2\delta}{m_n}]\times[x_n-\frac{2}{m_n},x_n+\frac{2}{m_n}]\times[\frac{1}{2m_n},\frac{4}{m_n}]$, $(t_n,x_n,y_n)$, and we define $\mcR = \chi_n R_n=[-2\delta,2\delta]\times[-2,2]\times[1/2,4]$.
 By \eqref{w_pde} in $\mcR$ thanks to the Schauder interior estimates (cf. Theorem 5 in Sec.2 of Chap. 3 \cite{AF_book}), one has
  \begin{align*}
 |w_n|^{\mcR}_{2+\alpha} &\le |w_n|^{\mcR}_0 +\frac{1}{m_n}|g_n|^{\mcR}_{\a,2} \\
    &\le |w_n|^{\mcR}_0 +\frac{1}{m_n}\Big(\frac{\lambda^2}{2}|\partial^2_x u_n|^{\mcR}_{\a,2}+|d||\partial_x u_n|^{\mcR}_{\a,2}+|\partial_y h_n|^{\mcR}_{\a,2}\Big) \\
    &\le |v|^{R_n}_0 +\frac{1}{m_n}\Big(\frac{\lambda^2}{2}|\partial^2_x u|^{R_n}_{\a,2}+|d||\partial_x u|^{R_n}_{\a,2}+|\partial_y h|^{R_n}_{\a,2}\Big) \\
    &\le |v|^{R_n}_0 +\frac{C}{m_n}\Big(|u|^{R_n}_0+|h|^{R_n}_{\a,2}+|\partial_y h|^{R_n}_{\a,2}\Big)\\
    &\le |v|^{R_n}_0 +\frac{C}{m_n}\Big(|u|^{K}_0+|h|^{K}_{\a,2}+|\partial_y h|^{K}_{\a,2}\Big)<\hat{C}<\infty,
  \end{align*}
 where we pass from the third to fourth line using in succession: that exist $C_1>0$ such $|\partial_x u|^{R_n}_{\a,2}\le C_1 |\partial_x u|^{R_n}_{\a,1}$, that we can upper bound thanks to Schauder interior estimates on $u$ the two seminorms $|\partial_x u|^{R_n}_{\a,1}$, $|\partial^2_x u|^{R_n}_{\a,2}$ as follows
  $$
 |\partial_x u|^{R_n}_{\a,1} + |\partial^2_x u|^{R_n}_{\a,2} \le |u|^{R_n}_{2+\a}\le |u|^{R_n}_{0} + |h|^{R_n}_{\a,2}.
  $$
 The passage to the fifth line is because there exist $\hat{n}\in\N$ and $K$ compact set such that $R_n\subset K$ for all $n\ge\hat{n}$. The uniform bound by $\hat{C}$ follow by $|u|^{R_n}_0\rightarrow |v(t_0,x_0,0)|$.
 Now that we have the weighted holder norm estimate $|w_n|^{\mcR}_{2+\alpha}<\hat{C}$, we can consider a smaller rectangle $\mcR' = [\delta,\delta]\times[-1,1]\times[1,2]$ who has strictly positive distance from $\partial \mcR$ and get, in terms of standard holder norms, the uniform bound $\overline{|w_n|}^{\mcR '}_{2+\alpha}<\tilde{C}$. Now using the equi-boundedness and equi-continuity of a general subsequence $(w_{n_j})_j$ and of its derivatives $(\partial_\eta w_{n_j})_j$ and $(\partial_\zeta w_{n_j})_j$, by Ascoli-Arzela Theorem one can find subsequences $(w_{n_{j_k}})_k$, $(\partial_\eta w_{n_{j_k}})_k$ and $(\partial_\zeta w_{n_{j_k}})_k$ that converge uniformly on $\mcR'$ to continuous functions $w$, $\partial_\eta w$ and $\partial_\zeta w$ respectively. Being the uniform limit of the original sequence $w_n$ a constant equal to $v(t_0,x_0,0)$ then $\partial_\eta w_{n}$ and $\partial_\zeta w_{n}$ have to converge uniformly to $\partial_\eta w=0$ and $\partial_\zeta w=0$. So
  $$
  0\xleftarrow[\infty\leftarrow n]{|\cdot|^{\mcR'}_0} \zeta\partial_\eta w_{n}(s,\eta,\zeta) = \frac{\zeta}{m_n}\partial_y v\Big(\frac{s}{m_n}+t_n, \frac{\eta}{m_n}+x_n, \frac{\zeta}{m_n}\Big)
  $$
 and 
  $$
  0\xleftarrow[\infty\leftarrow n]{|\cdot|^{\mcR'}_0} \zeta\partial_\zeta w_{n}(s,\eta,\zeta) = \frac{\zeta}{m_n}\partial_x v\Big(\frac{s}{m_n}+t_n, \frac{\eta}{m_n}+x_n, \frac{\zeta}{m_n}\Big),
  $$
so, in particular, the limits hold if we restrict to the sequence $(t_n,x_n,y_n)_{n\in\N^*}$. Being $\partial_y v = \partial^2_y u$ and $\partial_x v = \partial_x \partial_y u$, then the proof is completed.
\end{proof}

\begin{remark} \label{remark_PDE_on_boundary}
 Under the hypotheses of Proposition \ref{min_hp_pde_existence_sol}, the equation $$
  \partial_t u(t,x,0) + (c \partial_x + a\partial_y +\varrho) u(t,x,0) =h(t,x,0)
  $$ 
 is satisfied for all $t\in[0,T)$, and so does not hold only as a limit. 
\end{remark}
\begin{proof}
 Let $t\in(0,T)$, since $\partial_y u$ is continuous up to the boundary, we expand $u$ in the direction $y$ around $(t,x,0)$ and $(t+\epsilon,x,0)$, and use the mean value theorem to get
  \begin{align*}
    \partial_t u(t,x,0)= \lim_{\epsilon \rightarrow 0^+} \frac{u(t+\epsilon,x,0) - u(t,x,0)}{\epsilon} &= \lim_{\epsilon\rightarrow 0^+}\frac{u(t+\epsilon,x,\epsilon^2) - u(t,x,\epsilon^2)+O(\epsilon^2)}{\epsilon} \\
    &= \lim_{\epsilon\rightarrow 0^+}  \partial_t u(t+\xi_\epsilon, x,\epsilon^2) +O(\epsilon),
  \end{align*}
 for some $\xi_\epsilon\in(0,\epsilon)$. Since for $y>0$ one has $ \partial_t u= -(\mcL +\varrho)u + h$, then by \eqref{0lims_dyyu_dxyu} and continuity of $\partial_x u$ and $\partial_y u$ up to the boundary $\{y=0\}$, one has 
  $$ 
  \partial_t u(t,x,0)=\lim_{\epsilon\rightarrow 0^+}-(\mcL+\varrho)u(t+\xi_\epsilon, x,\epsilon^2) + h(t+\xi_\epsilon, x,\epsilon^2) =  -(c \partial_x + a\partial_y +\varrho) u(t,x,0) +h(t,x,0).
  $$
Since the functions on both sides are continuous up to $t=0$, this is verified for $t=0$, too.
\end{proof}

So, we have just proven the following result.
\begin{prop}\label{hp_minimal_u_sol}
 Let $u$ defined as in \eqref{u_gen_sol}. Let $f$ and $h$ such that, as $j=0,1$, $\partial_x^{2j}f\in \mcC^{1-j}_{\pol}(\R\times\R_+)$,  $\partial_x^{2j}h\in \mcC^{1-j}_{\pol,T}(\R\times\R_+)$, $h$, $\partial_y h$ and $|h|^K_{\alpha,2}$, $|\partial_y h|^K_{\alpha,2}<\infty$ for all $K$ compact set contained in $[0,T)\times\R\times\R_+$.
 Then $\partial_x^{2j}u\in \mcC^{1-j}_{\pol,T}(\R\times\R_+)$ for $j=0,1$ and solves
  \begin{equation}
    \begin{cases}
    \partial_t u(t,x,y)+ \mcL u(t,x,y) +\varrho u(t,x,y)= h(t,x,y),\qquad& t\in [0,T), \,(x,y)\in \R\times\R_+,\\
 u(T,x,y)= f(x,y),\qquad& (x,y)\in \R\times\R_+.
    \end{cases} 
  \end{equation}
\end{prop}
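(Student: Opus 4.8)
The plan is to assemble the statement from the three preceding results, after first checking that the hypotheses of Proposition \ref{hp_minimal_u_sol} contain those of Lemma \ref{lemma-reg} and of Proposition \ref{min_hp_pde_existence_sol}. Indeed, the differentiability and polynomial-growth conditions imposed on $f$ and $h$ (that $\partial_x^{2j}f\in\mcC^{1-j}_{\pol}(\R\times\R_+)$ and $\partial_x^{2j}h\in\mcC^{1-j}_{\pol,T}(\R\times\R_+)$ for $j=0,1$, together with $h,\partial_y h$ locally Hölder) are exactly those of Lemma \ref{lemma-reg}, while the additional weighted bounds $|h|^K_{\alpha,2},|\partial_y h|^K_{\alpha,2}<\infty$ are precisely what Proposition \ref{min_hp_pde_existence_sol} requires. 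Hence both may be invoked. From Lemma \ref{lemma-reg} I immediately obtain the regularity claim $\partial_x^{2j}u\in\mcC^{1-j}_{\pol,T}(\R\times\R_+)$ for $j=0,1$; in particular $u$, $\partial_x u$, $\partial_x^2 u$ and $\partial_y u$ are continuous up to the boundary $\R\times\{0\}$, and the stochastic representations \eqref{u-x-xx}--\eqref{u-y} are available.

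Next I would establish the equation in the open domain. For $y>0$ the operator $\mcL$ is uniformly elliptic on compact subsets, so the Feynman--Kac representation \eqref{u_gen_sol} --- equivalently, applying It\^o's formula to $s\mapsto e^{\varrho(s-t)}u(s,X^{t,x,y}_s,Y^{t,y}_s)$ and using that the local-martingale part has vanishing expectation --- yields $\partial_t u+\mcL u+\varrho u=h$ for all $t\in[0,T)$ and $(x,y)\in\R\times\R_+^*$; this interior statement is already contained in the regularity analysis of \cite{BCT}. The terminal condition $u(T,x,y)=f(x,y)$ is immediate from \eqref{u_gen_sol} and continuity.

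The only remaining point, and the genuine crux, is to extend the equation to the degenerate boundary $\{y=0\}$. I would argue purely by continuity. The first-order part converges, $(c+dy)\partial_x u\to c\,\partial_x u(t,x,0)$ and $(a-by)\partial_y u\to a\,\partial_y u(t,x,0)$ as $y\to0$, since $\partial_x u$ and $\partial_y u$ are continuous up to the boundary; meanwhile every second-order term of $\mcL$ carries a factor $y$ that suppresses it. Concretely, $\tfrac{y}{2}\lambda^2\partial_x^2 u\to0$ because $\partial_x^2 u$ is continuous hence locally bounded, while $y\rho\lambda\sigma\,\partial_x\partial_y u\to0$ and $\tfrac{y}{2}\sigma^2\partial_y^2 u\to0$ by the two limits \eqref{0lims_dyyu_dxyu} of Proposition \ref{min_hp_pde_existence_sol}. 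Thus $\mcL u$ extends continuously to $\{y=0\}$ with value $c\,\partial_x u(t,x,0)+a\,\partial_y u(t,x,0)$, and Remark \ref{remark_PDE_on_boundary} furnishes exactly the matching identity $\partial_t u(t,x,0)+(c\partial_x+a\partial_y+\varrho)u(t,x,0)=h(t,x,0)$ for every $t\in[0,T)$. Gluing the interior equation with this boundary identity yields the PDE on the full closed domain $\R\times\R_+$, completing the proof.

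I expect the main obstacle to be precisely the vanishing of the degenerate second-order terms at $\{y=0\}$, i.e. controlling the possible blow-up of $\partial_y^2 u$ and $\partial_x\partial_y u$ near the boundary; this is the nontrivial analytic input, and it is exactly what Proposition \ref{min_hp_pde_existence_sol} supplies through the rescaling argument and the weighted Schauder estimates. Once those limits are granted, the rest of the argument is a routine continuity gluing of the interior representation with the boundary identity of Remark \ref{remark_PDE_on_boundary}.
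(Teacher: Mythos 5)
Your proposal is correct and follows essentially the same route as the paper: the paper's proof of this proposition is precisely the concatenation of Lemma \ref{lemma-reg} (regularity and continuity of $u$, $\partial_x u$, $\partial^2_x u$, $\partial_y u$ up to $\{y=0\}$), the interior equation for $y>0$, Proposition \ref{min_hp_pde_existence_sol} (vanishing of $y\partial^2_y u$ and $y\partial_x\partial_y u$ at the boundary), and Remark \ref{remark_PDE_on_boundary} (the boundary identity), which is exactly your gluing argument. You also correctly identify the control of the degenerate second-order terms near $\{y=0\}$ as the genuine analytic crux, which is the same emphasis the paper places on Proposition \ref{min_hp_pde_existence_sol}.
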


%\red{Should we write this last result of the PDE with $\varrho =0$ and $h=0$, and for the CIR PDE in a remark?}
We can compare the result we have just proven with the one in Proposition \ref{prop-reg-new}.
In the latter one, Briani et al. use the stochastic representation \eqref{stoc_repr-new_chap4}, with $q=2$, to prove the function $u$ belongs to $\mcC^{1,2}([0,T]\times\R\times\R_+)$ and so by continuity of all the derivatives involved in the problem,  that initially (for example taking only the final data $f$ just continuous) is solved only over $[0,T)\times\R\times\R^*_+$, is solved even over $[0,T)\times\R\times\{0\}$. 
To get all this regularity for $u$ one has to request a lot of regularity on $f$ and $h$: $\partial_x ^{2j}f\in \mcC^{2-j}_{\pol}(\R\times\R_+)$, $\partial_x ^{2j}h\in \mcC^{2-j}_{\pol, T}(\R\times\R_+)$, and for all $(m,n)\in\N^2$ such that $m+2n\le 4$ and $m\le 6$, $\partial^m_x\partial^n_y h$ locally Hölder continuous in $[0,T)\times\R\times\R_+^*$. We are capable of lowering these requests on $f$ and $h$ because even if  $u$ is less regular, it can still solve the reference problem. With the hypotheses considered in \eqref{hp_minimal_u_sol}, we do not have the continuity of $\partial_x\partial_y u$ and $\partial_y^2$, but we prove only \eqref{0lims_dyyu_dxyu} and this is enough, as shown in Remark \ref{remark_PDE_on_boundary}, to prove the PDE is solved over the boundary $[0,T)\times\R\times\{0\}$.

We now state sufficient conditions to ensure the uniqueness of the solution.
\begin{prop}
 There is at most one classical solution $u\in \mcC^{1,2}([0,T)\times(\R\times\R^*_+)) \cap \mcC^{1,1,1}([0,T)\times\R\times\R_+)\cap \mcC([0,T]\times\R\times\R_+)$ to the PDE \eqref{reference_PDE} such that the solution has polynomial growth in $(x,y)$ uniformly in $t$. In particular, under the hypothesis of Proposition \ref{hp_minimal_u_sol}, $u$ defined as in \eqref{u_gen_sol} is the unique solution.
\end{prop}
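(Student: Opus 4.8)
\emph{Strategy.} The plan is to prove a Feynman-Kac verification identity: I will show that \emph{any} $u$ in the prescribed class that solves \eqref{reference_PDE} with polynomial growth (uniform in $t$) must coincide with the stochastic representation \eqref{u_gen_sol}. Since the latter is built solely from $f$, $h$ and the law of \eqref{referenceDiffusion}, two solutions in the class are forced to agree, which is the claimed ``at most one''. The second assertion is then immediate: under the hypotheses of Proposition \ref{hp_minimal_u_sol}, the function \eqref{u_gen_sol} belongs to the class and solves the PDE, so it is \emph{the} unique solution.

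\emph{The verification identity, granting It\^o.} Fix $(t,x,y)\in[0,T)\times\R\times\R_+$ and write $(X_s,Y_s)=(X^{t,x,y}_s,Y^{t,y}_s)$. I would apply It\^o's formula to $s\mapsto e^{\varrho(s-t)}u(s,X_s,Y_s)$, which formally gives
\begin{equation*}
 e^{\varrho(T-t)}u(T,X_T,Y_T)-u(t,x,y)=\int_t^T e^{\varrho(s-t)}\big[(\partial_t+\mcL+\varrho)u\big](s,X_s,Y_s)\,ds+\int_t^T e^{\varrho(s-t)}\,dM_s,
\end{equation*}
with $dM_s=\lambda\sqrt{Y_s}\,\partial_xu\,(\rho\,dW_s+\brho\,dB_s)+\sigma\sqrt{Y_s}\,\partial_yu\,dW_s$. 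By \eqref{reference_PDE} the first integrand equals $e^{\varrho(s-t)}h(s,X_s,Y_s)$. I would localise with $\theta_R=\inf\{s\ge t:|X_s|\vee Y_s\ge R\}\wedge(T-\tfrac1R)$ to turn $M$ into a true martingale, take expectations, and then remove the localisation: the polynomial growth of $u,\partial_xu,\partial_yu$ and the moment bounds $\E[\sup_{s\le T}(|X_s|^p+Y_s^p)]<\infty$ provide the uniform integrability to send $R\to\infty$, while $u\in\mcC([0,T]\times\R\times\R_+)$ and $u(T,\cdot)=f$ identify the terminal term as $\E[e^{\varrho(T-t)}f(X_T,Y_T)]$. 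The outcome is precisely \eqref{u_gen_sol}.

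\emph{The main obstacle.} The delicate point — and the only place where the failure of the Feller condition enters — is the validity of It\^o's formula up to the degenerate boundary $\R\times\{0\}$, which $(X,Y)$ reaches with positive probability when $\sigma^2>2a$, even though $u$ is only $\mcC^{1,2}$ on the open half-space $\{y>0\}$. The crucial observation is that the second-order It\^o correction never sees the individual derivatives $\partial_x^2u,\partial_x\partial_yu,\partial_y^2u$, but only the combination prescribed by $d\langle X\rangle_s=\lambda^2Y_s\,ds$, $d\langle X,Y\rangle_s=\rho\lambda\sigma Y_s\,ds$, $d\langle Y\rangle_s=\sigma^2Y_s\,ds$; evaluated along $(s,X_s,Y_s)$,
\begin{equation*}
\begin{aligned}
 \tfrac{Y_s}{2}\big(\lambda^2\partial_x^2u+2\rho\lambda\sigma\,\partial_x\partial_yu+\sigma^2\partial_y^2u\big)
 &=\mcL u-(c+dY_s)\partial_xu-(a-bY_s)\partial_yu\\
 &=h-\partial_tu-\varrho u-(c+dY_s)\partial_xu-(a-bY_s)\partial_yu,
\end{aligned}
\end{equation*}
where the last equality uses the equation on $\{y>0\}$. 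Every term on the right is continuous up to $\{y=0\}$ because $u\in\mcC^{1,1,1}([0,T)\times\R\times\R_+)$ and $h$ is continuous; hence the whole second-order correction extends continuously and locally boundedly to the boundary, and with it the bracket $(\partial_t+\mcL+\varrho)u-h$, which, being $\equiv0$ on $\{y>0\}$ and continuous, also vanishes on $\R\times\{0\}$ (this is exactly Remark \ref{remark_PDE_on_boundary}).

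\emph{Making It\^o rigorous.} To upgrade the formal computation to a genuine identity I would argue by approximation. For each $k$ let $u_k\in\mcC^{1,2}([0,T)\times\R\times\R_+)$ coincide with $u$ on $\{y\ge 1/k\}$ and be extended into $\{y<1/k\}$ so as to preserve the continuous first-order boundary data (for instance affinely in $y$ off the slice $\{y=1/k\}$), so that $u_k\to u$ together with $\partial_tu_k,\partial_xu_k,\partial_yu_k$ while the second-order correction of $u_k$ stays bounded on $\{y<1/k\}$. It\^o's formula applies to the genuinely $\mcC^{1,2}$ function $u_k$; its drift integrand equals $e^{\varrho(s-t)}h$ on $\{Y_s\ge1/k\}$ and is merely bounded on $\{Y_s<1/k\}$. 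Letting $k\to\infty$, the contribution of the latter set is controlled by $\E\int_t^{\theta_R}\ind{\{Y_s<1/k\}}\,ds\to0$, which holds because the CIR component $Y$ (with $a>0$) spends zero Lebesgue time at $0$ and has vanishing local time there; together with the continuity of $u$ up to the boundary and dominated convergence, this yields the It\^o identity used above. I expect this boundary analysis to be the heart of the argument, the remaining steps being standard localisation and moment estimates.
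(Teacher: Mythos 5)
Your route (a Feynman--Kac verification identity proved by It\^o's formula) is genuinely different from the paper's proof, which is purely analytic: the paper takes the difference $w=u-v$ of two solutions, adds the barrier $\varepsilon e^{Mt}(1+x^{L+2}+y^{L+2})$, and excludes a negative minimum both at interior points (gradient/Hessian sign conditions give $\mcL w^\varepsilon\ge 0$, contradicting $(\partial_t-\mcL)w^\varepsilon>0$) and at boundary points $y_0=0$, where it uses only first-order conditions together with the fact that on $\{y=0\}$ the equation degenerates to the first-order relation $\partial_t w=c\,\partial_x w+a\,\partial_y w$ and $a>0$. No stochastic analysis is needed, which is precisely how the paper avoids the obstacle you correctly identify: the validity of It\^o's formula up to the degenerate boundary, which $(X,Y)$ reaches with positive probability when $\sigma^2>2a$. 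Your structural observation that only the combination $\tfrac y2(\lambda^2\partial_x^2u+2\rho\lambda\sigma\,\partial_x\partial_yu+\sigma^2\partial_y^2u)$ extends continuously to $\{y=0\}$ (via the equation) is correct and is the analogue of Remark \ref{remark_PDE_on_boundary}.

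The genuine gap is in the step that is supposed to make It\^o rigorous, and it fails for exactly the reason contained in your own ``crucial observation''. First, the affine-in-$y$ extension $u_k$ is not $\mcC^{1,2}$: for $y<1/k$ one has $u_k(t,x,y)=u(t,x,1/k)+(y-1/k)\partial_yu(t,x,1/k)$, so $\partial_x^2u_k$ and $\partial_tu_k$ involve $\partial_x^2\partial_yu$ and $\partial_t\partial_yu$ on the slice $\{y=1/k\}$ --- third-order/mixed derivatives that membership in $\mcC^{1,2}([0,T)\times(\R\times\R^*_+))$ does not provide --- and $\partial_y^2u_k$ jumps across the slice. Second, and more seriously, the claim that ``the second-order correction of $u_k$ stays bounded on $\{y<1/k\}$'' amounts to bounds on $\tfrac1k\,\partial_x^2u(t,x,1/k)$ and $\tfrac1k\,\partial_x\partial_yu(t,x,1/k)$, uniformly in $k$. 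The class $\mcC^{1,2}(\{y>0\})\cap\mcC^{1,1,1}$ gives no such individual control: continuity of $\partial_yu$ (resp.\ $\partial_xu$) up to the boundary only forces $\partial_y^2u$ (resp.\ $\partial_x\partial_yu$) to have a convergent improper integral in $y$ near $0$, and oscillatory behaviour such as $\partial_y^2u\sim y^{-3/2}\sin(1/y^2)$ is compatible with a continuous $\partial_yu$ while $y\,\partial_y^2u$ is unbounded. Only the PDE \emph{combination} of second derivatives is controlled near $\{y=0\}$, and that information is destroyed the moment you build $u_k$ out of the individual derivatives on the slice; so the dominated-convergence passage $k\to\infty$ is unjustified. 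Two smaller points: the de-localisation invokes polynomial growth of $\partial_xu,\partial_yu$, which is not among the hypotheses (only $u$ has polynomial growth); and for the ``at most one'' claim you should run the argument on $w=u-v$ (data $f=0$, $h=0$), since proving that each solution equals \eqref{u_gen_sol} requires integrability of $h$ along the paths, which is not assumed in the general uniqueness statement.
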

\begin{proof}
 Suppose that $u$ and $v$ are two solutions in the reference space $\mcC^{1,2}([0,T)\times(\R\times\R^*_+)) \cap \mcC^{1,1,1}([0,T)\times\R\times\R_+)\cap \mcC([0,T]\times\R\times\R_+)$, clearly the difference $w=u-v$ lies in the same space. For simplicity, we reverse the time by the change of variable $t\mapsto T-t$, so $w$ solves
  \begin{equation}
    \begin{cases}
 (\partial_t-\mcL -\varrho)w(t,x,y) = 0,\quad   t\in(0,T], &x\in\R, y\in\R_+, \\
 w(0,x,y) = 0, \quad  &x\in\R, y\in \R_+.
    \end{cases}
  \end{equation}
 From now on we consider $\varrho=0$, because $\exp(-\varrho t)w(t,x,y)$ solves the problem with the constant equal to 0, and the function $\mcM_{\varrho}:w(t,x,y)\mapsto \exp(-\varrho t)w(t,x,y)$ is a bijection from the reference space to itself.
 Let $L$ the first even integer such that $|w(t,x,y)|\le C(1+x^L+y^L)$. We call $h(x,y)=1+x^{L+2}+y^{L+2}$ and with a little algebra one can show that exists $M>0$ such that
  $$
 \mcL h(x,y)= \frac{(L+2)(L+1)}{2}y(\lambda^2 x^L+\sigma^2y^L)+(L+2)((c+d y)x^{L+1}+(a-by)y^{L+1})<Mh(x,y).
  $$
 Let $\varepsilon>0$, we define $w^\varepsilon:[0,T]\times\R\times\R_+\rightarrow \R$ by 
  $$w^\varepsilon(t,x,y) = w(t,x,y)+\varepsilon e^{Mt}h(x,y),$$ 
 then
  $$
 (\partial_t -\mcL) w^\varepsilon(t,x,y) = \varepsilon e^{Mt}(M-\mcL)h(x,y)>0
  $$
 for all the interior points. Let $\Gamma:=\{(t,x,y)\mid w^\varepsilon(t,x,y)< 0\} $, we remark that $\Gamma$ is bounded (because of the growth bound $|w(t,x,y)|\le C(1+x^L+y^L)$), and then $\bbar{\Gamma}$ is compact by continuity of $w^\varepsilon$. Assume that $\Gamma\neq \emptyset$ and define $t_0=\inf\{t\ge0\mid (t,x,y)\in\bbar{\Gamma} \text{, for some } (x,y)\in\R\times\R_+ \}$. We consider a point $(t_0,x_0,y_0)\in\bbar{\Gamma}$. By continuity of $w^\varepsilon$ and definition of $t_0$, $w$ must be equal 0 in $(t_0,x_0,y_0)$. In the meantime, being $w^\varepsilon(0,x,y)\ge 1$ and $\bbar{\Gamma}$ compact, one has $t_0>0$.
 We suppose first $y_0=0$. Then, by the fact that $t_0$ is an infimum, we have 
  $$
  \partial_t w^\varepsilon(t_0,x_0,0)\le 0, \qquad \partial_x w^\varepsilon(t_0,x_0,0)=0, \qquad\partial_y  w^\varepsilon(t_0,x_0,0)\ge 0 
  $$
 otherwise we can find a triple $(t_1,x_1,y_1)$ with $t_1<t_2$ such that $w^\varepsilon(t_1,x_1,y_1)<0$ contradicting the minimality of $t_0$. Being $a>0$, one has
  $$
  0\ge \partial_t w^\varepsilon(t_0,x_0,0) - a \partial_y w^\varepsilon(t_0,x_0,0) = \varepsilon e^{M t}M(1+x_0^{L+2})>0
  $$
 so $y_0=0$ is not possible. We consider now $y_0>0$, then $(t_0,x_0,y_0)$ is an interior point of the domain. By minimality of $t_0$ one has $\partial_t w^\varepsilon(t_0,x_0,y_0)\le 0$ and $(x_0,y_0)$ is a minimum point for the map $(x,y) \mapsto w^\varepsilon(t_0,x,y)$. Then, this map has a gradient equal to 0 and Hessian positive semi-definite, so $\mcL w^\varepsilon \ge 0$. One has
  $$
    0 \ge (\partial_t -\mcL) w^\varepsilon(t_0,x_0,y_0) = \varepsilon e^{Mt}(M-\mcL)h(x_0,y_0)>0.
  $$
 This contradiction implies that $\Gamma = \emptyset$. Since this holds for all $\varepsilon>0$, it follows that $w\ge0$. Applying the same argument to $-w$ shows that $w\le0$ and so $w=0$.
\end{proof}

%%%%%%%%%%%%%%%%%%%%%%%%%%%%%%%%%%%%%%%%%%%%%%%%%%%%
%%%%%%%%%%%%%%%%%%%% NEW SECTION %%%%%%%%%%%%%%%%%%%
%%%%%%%%%%%%%%%%%%%%%%%%%%%%%%%%%%%%%%%%%%%%%%%%%%%%
\section{Existence and uniqueness of viscosity solutions}\label{viscosity_section}
Here, we want to explore the extended problem \eqref{reference_PDE} from the point of view of viscosity solutions in order to reduce the regularity on the function $f$. Now, we introduce some definitions (cf. \cite{CIL92}) that will be useful from now on.

Let $F:(0,T]\times\R^m\times\R\times\R^m\times\mcS(m)\rightarrow\R$ a continuous function where $\mcS(m)$ is the set of $m\times m$-dimensional, $\R$-valued symmetric matrices.
\begin{definition}[Degenerate ellipticity]
  $F$ is called degenerate elliptic if it is nonincreasing in its matrix argument
    $$
      F(t,x,u,p,X) \leq F(t,x,u,p,Y) \text{ for } Y\leq X,
    $$
    with the classical ordering $\leq$ over $\mcS(m)$ defined by the relation
    $$
      Y\leq X  \Leftrightarrow \langle Y\zeta,\zeta  \rangle \leq \langle X\zeta,\zeta  \rangle \text{ for all } \zeta\in\R^m.
    $$
\end{definition}

\begin{definition}[Proper]
  $F$ is called proper if it is degenerate elliptic and nondecreasing in $u$.
\end{definition}

\begin{remark}
  With the change of variable $s=T-t$ the original problem \eqref{reference_PDE} becomes
  \begin{equation}\label{reference_PDE_forward}
    \begin{cases}
      \partial_s u(s,x,y) +F(s,(x,y), u(s,x,y), D_{(x,y)}u(s,x,y),D^2_{(x,y)}u(s,x,y) )  = 0,  \\ 
      \phantom{\partial_s u(s,x,y) +F(s,(x,y), u(s,x,y), D_{(x,y)}u(s,x,y),D^2_{(x,y)})}\forall s\in(0,T],\, x\in\R,\, y\in\R_+, 
      \\
      u(0,x,y) = f(x,y), \quad \forall x\in\R, y\in \R_+,
    \end{cases}
  \end{equation}
  where
  \begin{equation}\label{F_logheston}
    F(s,(x,y),r,p,X)= -\frac{y}{2}(\lambda^2X_{1,1} +2\rho\lambda\sigma X_{1,2} +\sigma^2X_{2,2}) -\mu_X(y) p_1 -\mu_Y(y) p_2 - \varrho r+h(T-s,x,y)
  \end{equation}
  is degenerate elliptic (and proper if $\varrho\le0$).
\end{remark}

We consider a convex domain (possibly closed) $\mcO\subseteq \R^m$, $T>0$ and we name $\mcO_T = (0,T]\times\mcO$.
We study the following partial differential equation problem
\begin{equation}
  \begin{cases}
    u_t(t,x) + F(t,x,u(t,x),D_xu(t,x),D^2_xu(t,x))=0 \quad&\text{ if } t\in(0,T],\text{ } x\in\mcO,\\
    u(0,x)= f(x),   &\qquad\qquad\qquad x\in\mcO.
  \end{cases}
\end{equation}
%We observe that using the change of variable $s = T-t$ our original PDE \eqref{reference_PDE} becomes of the same type of the latter, being $-\mcL$ a proper operator. 
We define 
\begin{align*}
  \LSC(\mcO_T) &= \{f:\mcO_T\rightarrow\R \mid   \text{$f$ is lower semi-continuous at every } (t,x)\in \mcO_T \}, \\
  \USC(\mcO_T) &= \{f:\mcO_T\rightarrow\R \mid   \text{$f$ is upper semi-continuous at every } (t,x)\in \mcO_T  \},
\end{align*}
and we give the following definition.
\begin{definition}\label{viscosity_property}
  Given a function $u$ and $(t,x)\in(0,T]\times\mcO$, we say that at $(t,x)$
  $$
  \partial_t u(t,x) +F(t,x,u(t,x),D_xu(t,x),D^2_x u(t,x))\ge 0\,(\text{resp.}\le 0)\text{ in viscosity sense }
  $$   if, for each smooth function $\phi$ such that $u-\phi$ has a local minimum (resp. a local maximum) at $(t,x)$ 
  $$
  \partial_t \phi(t,x) + F(t,x, u(t,x),D_x\phi(t,x),D^2_x\phi(t,x))\ge 0\,(\text{resp.}\le 0) \text{ in classical sense} .
  $$ 
\end{definition}
We introduce the notion of semijets to give an equivalent definition that will be useful in the following.
\begin{definition}\label{semijets}
  Let $u:\mcO_T\rightarrow \R$, then its upper parabolic second order semijet $\mcP_\mcO^{2,+} u$ is defined by
  \begin{align*}
    \mcP_\mcO^{2,+}u:\mcO_T&\rightarrow \mathscr{P}(\R\times\R^m\times\mcS(m))\\
    (t,x)&\mapsto \mcP_\mcO^{2,+}u(t,x)
  \end{align*}
  where $(c,p,X)$ lies in the set $\mcP_\mcO^{2,+}u(t,x)$ if 
  \begin{equation}\label{upper_semijet}
    \begin{split}
      u(s,z) \le  u(&t,x) +b(s-t)+ \langle p,z-x\rangle + \frac{1}{2}\langle X(z-x),x-z\rangle \\
      &+o(|s-t|+|z-x|^2) \text{ as } \mcO_T\ni(s,z)\rightarrow (t,x),
    \end{split}
  \end{equation}
  and we define the lower parabolic second order semijet $\mcP_\mcO^{2,-} u:= -\mcP_\mcO^{2,+}(-u)$.
  We also define the closure of these set-valued mappings as 
  \begin{multline}
    \bmcP_\mcO^{2,+}u(t,x)=\{(c,p,X)\in\R\times\R^m\times\mcS(m)\mid \exists\big((t_n,x_n),c_n,p_n,X_n\big)\in\mcO_T\times\R\times\R^m\times\mcS(m)\text{ s.t.} \\
    (c_n,p_n,X_n)\in \bmcP_\mcO^{2,+}u(t_n,x_n)\text{ and } \big((t_n,x_n),u(t_n,x_n),c_n,p_n,X_n\big)\rightarrow \big((t,x),u(t,x),c_n,p_n,X_n\big) \},
  \end{multline}
  and $\bmcP_\mcO^{2,-}u$, closure of $\mcP_\mcO^{2,-}u$, is defined in the same way.
\end{definition}

We now give the definition of viscosity super and sub-solutions. 
\begin{definition}[Viscosity super-solution (sub-solution)]\label{def_viscosity_sol}
  Let $F$ be a proper operator and $T>0$. A function $u$ that is $\LSC(\mcO_T)$  (resp. $\USC(\mcO_T)$)  is called a viscosity super-solution (resp. sub-solution) with initial value $f$ if, 
  \begin{itemize}
    \item for any $(t,x)\in\mcO_T$, $\partial_t\phi(t,x) + F(t,x,u(t,x),D_x u(t,x),D^2_x u(t,x))\geq 0 \text{ $($resp. $\le0)$}$ in viscosity sense,
    \item for any $x\in\mcO$, $u(0,x)\geq f(x) \text{ $($resp. $\le f(x)).$}$
  \end{itemize}
  A function $u$ that is both a super and a sub-solution is called a viscosity solution.
\end{definition}
 The following result gives an interesting equivalent definition.
\begin{prop}
  Let $(t,x)\in\mcO_T$. Then
  \begin{align}
    \mcP_\mcO^{2,+}u(t,x) =\{(\partial_t\phi(t,x),D_x\phi(t,x),D^2_x\phi(t,x))\mid \phi\text{ is } \mcC^{1,2} \text{ and } u-\phi  \text{ has local max. at } (t,x) \},\\
    \mcP_\mcO^{2,-}u(t,x) =\{(\partial_t\phi(t,x),D_x\phi(t,x),D^2_x\phi(t,x))\mid \phi\text{ is } \mcC^{1,2} \text{ and } u-\phi  \text{ has local min. at } (t,x) \}.
  \end{align}
\end{prop}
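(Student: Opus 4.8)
The plan is to prove each of the two identities by a double inclusion, after reducing everything to the upper semijet. Since $\mcP_\mcO^{2,-}u=-\mcP_\mcO^{2,+}(-u)$ by definition, and since $u-\phi$ has a local minimum at $(t,x)$ precisely when $(-u)-(-\phi)$ has a local maximum there — while $(\partial_t(-\phi),D_x(-\phi),D^2_x(-\phi))(t,x)=-(\partial_t\phi,D_x\phi,D^2_x\phi)(t,x)$ — the second identity follows from the first applied to $-u$. Hence I would only establish the characterization of $\mcP_\mcO^{2,+}u(t,x)$.

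For the inclusion $\supseteq$, I would take $\phi\in\mcC^{1,2}$ such that $u-\phi$ has a local maximum at $(t,x)$ and set $(c,p,X)=(\partial_t\phi(t,x),D_x\phi(t,x),D^2_x\phi(t,x))$. The maximum property gives $u(s,z)-u(t,x)\le\phi(s,z)-\phi(t,x)$ for $(s,z)\in\mcO_T$ near $(t,x)$, and the parabolic Taylor expansion of $\phi$ — first order in $s$ and second order in $z$, with remainder $o(|s-t|+|z-x|^2)$ exactly because $\phi$ is $\mcC^1$ in time and $\mcC^2$ in space — rewrites the right-hand side as $c(s-t)+\langle p,z-x\rangle+\tfrac12\langle X(z-x),z-x\rangle+o(|s-t|+|z-x|^2)$. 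This is the defining inequality of $\mcP_\mcO^{2,+}u(t,x)$, so $(c,p,X)$ belongs to the jet. This direction is routine.

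The substantive direction is $\subseteq$. Given $(c,p,X)\in\mcP_\mcO^{2,+}u(t,x)$, I would introduce the quadratic $P(s,z)=u(t,x)+c(s-t)+\langle p,z-x\rangle+\tfrac12\langle X(z-x),z-x\rangle$, so that the defining inequality reads $g:=u-P\le o(|s-t|+|z-x|^2)$ with $g(t,x)=0$. The idea is to absorb this error into a smooth correction: I would construct $\psi\in\mcC^{1,2}$ with $\psi(t,x)=0$, $\partial_t\psi(t,x)=0$, $D_x\psi(t,x)=0$, $D^2_x\psi(t,x)=0$ and $g\le\psi$ on a neighborhood of $(t,x)$. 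Then $\phi:=P+\psi$ is $\mcC^{1,2}$, has derivatives $(c,p,X)$ at $(t,x)$, and $u-\phi=g-\psi\le 0=u(t,x)-\phi(t,x)$ near $(t,x)$, so that $u-\phi$ has a local maximum at $(t,x)$, as required.

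The construction of $\psi$ is the main obstacle. I would set $m(r)=\sup\{g(s,z)^+:|s-t|+|z-x|^2\le r\}$ on a small neighborhood; the jet condition alone forces $m$ to be finite, nondecreasing, and $o(r)$ as $r\to0^+$. Writing $m(\tau+\rho)\le m(2\tau)+m(2\rho)$ for the nondecreasing $m$, I would bound $g(s,z)\le m(2|s-t|)+m(2|z-x|^2)$ and build $\psi$ as a sum of a time part majorizing $m(2|s-t|)$ and a space part majorizing $m(2|z-x|^2)$, each obtained by averaging and mollifying $m$ into a smooth, nondecreasing, $o(\cdot)$ majorant. The delicate point is the time direction: the weight $|s-t|$ is not differentiable at $s=t$, so I must choose the radial majorant $N$ of the time modulus with $N'(0)=0$; this is exactly what cancels the kink of $s\mapsto N(|s-t|)$ and yields $\partial_t\psi(t,x)=0$ with $\partial_t\psi$ continuous. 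The second-order flatness in $z$ is the easier analogue, coming from $\Phi'(0)=0$ for a space majorant of the form $\Phi(|z-x|^2)$. Verifying that these majorants exist with the prescribed vanishing of their derivatives is the standard but careful modulus-of-continuity argument that carries the whole inclusion.
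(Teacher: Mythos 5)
Your proof is correct and follows essentially the same path as the paper: the inclusion $\supseteq$ by the parabolic Taylor expansion of a $\mcC^{1,2}$ test function, the reduction of the lower semijet to the upper one via $\mcP_\mcO^{2,-}u=-\mcP_\mcO^{2,+}(-u)$, and the recognition that all the work lies in the inclusion $\subseteq$. For that hard inclusion the paper simply cites Fleming and Soner (Proposition V.4.1); the construction you sketch — quadratic osculation $P$, the modulus $m(r)=o(r)$, and smooth majorants obtained by averaging, with $N'(0)=0$ in time and $\Phi'(0)=0$ (together with $r\,\Phi''(r)\to 0$, which the iterated averaging provides, to make the Hessian continuous) in space — is precisely the argument behind that citation, so your proposal is, if anything, more self-contained than the paper's proof.
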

\begin{proof}
  The inclusion $\supseteq$ follows easily by using the local maximum (respectively minimum) property and developing $\phi$ using Taylor Theorem around $(t,x)$ up to the first order in $t$, and to the second in $x$. The nontrivial inclusion $\subseteq$ requires constructing, for every $(c,p,X)\in\mcP_\mcO^{2,+}u(t,x)$, a regular function such that the difference $u-\phi$ has a local minimum at $(t,x)$. We refer to Fleming and Soner \cite{FS93book}, V.4 Proposition 4.1.
\end{proof}
As an immediate consequence, we have the following characterization of super and sub-viscosity solutions.
\begin{corollary}
  A function $w\in\USC(\mcO_T)$ is a viscosity sub-solution with initial value $f$, if and only if
  \begin{equation}\label{subsol_2nd_def}
    \begin{cases}
      c + F(t,x,w(t,x),p,X)\leq 0 \text{ for all } (t,x)\in\mcO_T \text{ and } (c,p,X)\in\mcP_\mcO^{2,+}w(t,x), \\
      w(0,x)\leq f(x) , \text{ for any } x\in\mcO.
    \end{cases}
  \end{equation}
  A function $v\in\LSC(\mcO_T)$  is a viscosity super-solution with initial value $f$, if and only if
  \begin{equation}\label{supersol_2nd_def}
    \begin{cases}
      c + F(t,x,v(t,x),p,X)\geq 0,\text{ for all } (t,x)\in\mcO_T \text{ and } (c,p,X)\in\mcP_\mcO^{2,-}v(t,x), \\
      v(0,x)\geq f(x) , \text{ for any } x\in\mcO.
    \end{cases}
  \end{equation}
\end{corollary}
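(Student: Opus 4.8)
The plan is to derive this corollary directly from the preceding Proposition, which identifies the upper (resp.\ lower) semijet $\mcP_\mcO^{2,+}w(t,x)$ (resp.\ $\mcP_\mcO^{2,-}w(t,x)$) with the set of jets $(\partial_t\phi(t,x),D_x\phi(t,x),D^2_x\phi(t,x))$ of $\mcC^{1,2}$ test functions $\phi$ for which $w-\phi$ attains a local maximum (resp.\ minimum) at $(t,x)$. The entire content of the statement is to transfer the viscosity inequalities of Definition \ref{def_viscosity_sol}, phrased through test functions, into the equivalent inequalities phrased through semijets. The initial conditions $w(0,x)\le f(x)$ (resp.\ $\ge$) are literally identical in the two formulations, so they require no argument and I would simply carry them along.

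First I would treat the sub-solution case. For the forward implication, I assume $w\in\USC(\mcO_T)$ is a viscosity sub-solution and fix $(t,x)\in\mcO_T$ together with an arbitrary $(c,p,X)\in\mcP_\mcO^{2,+}w(t,x)$. By the preceding Proposition (its $\supseteq$ content) there exists a $\mcC^{1,2}$ function $\phi$ with $(c,p,X)=(\partial_t\phi(t,x),D_x\phi(t,x),D^2_x\phi(t,x))$ and such that $w-\phi$ has a local maximum at $(t,x)$. Applying the defining viscosity inequality of Definition \ref{viscosity_property} to this particular $\phi$ yields $\partial_t\phi(t,x)+F(t,x,w(t,x),D_x\phi(t,x),D^2_x\phi(t,x))\le 0$, which is precisely $c+F(t,x,w(t,x),p,X)\le 0$, i.e.\ \eqref{subsol_2nd_def}. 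For the converse, I assume \eqref{subsol_2nd_def} and take any smooth $\phi$ for which $w-\phi$ has a local maximum at $(t,x)$; the Proposition (its $\subseteq$ content) guarantees that the jet $(\partial_t\phi(t,x),D_x\phi(t,x),D^2_x\phi(t,x))$ lies in $\mcP_\mcO^{2,+}w(t,x)$, so the assumed inequality gives $\partial_t\phi(t,x)+F(t,x,w(t,x),D_x\phi(t,x),D^2_x\phi(t,x))\le 0$. As $\phi$ was arbitrary, $w$ satisfies the viscosity inequality of Definition \ref{viscosity_property} at every $(t,x)\in\mcO_T$ and is therefore a sub-solution.

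The super-solution case follows verbatim after replacing $\mcP_\mcO^{2,+}$ by $\mcP_\mcO^{2,-}$, ``local maximum'' by ``local minimum'', and each ``$\le$'' by ``$\ge$''; the second line of the Proposition supplies the corresponding jet characterization, and the identity $\mcP_\mcO^{2,-}w=-\mcP_\mcO^{2,+}(-w)$ from Definition \ref{semijets} makes the symmetry transparent. Because the whole argument is a pure translation through an already-established equivalence, there is no genuine obstacle. The only point deserving a little care is to invoke the Proposition in the correct direction in each implication --- using the existence of a touching test function for the forward direction and the membership of an arbitrary test jet in the semijet for the converse --- so that the right half of the bijection is available where it is needed.
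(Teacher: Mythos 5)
Your proof is correct and takes exactly the route the paper intends: the paper states this corollary as an immediate consequence of the preceding semijet-characterization Proposition, and you have simply spelled out the two-way translation (existence of a touching test function for each jet in one direction, membership of the test-function jet in the semijet in the other). The only blemish is cosmetic: you have the inclusion labels swapped --- the forward implication uses the $\subseteq$ content of the displayed set equality (the nontrivial Fleming--Soner direction), and the converse uses the $\supseteq$ content --- but the substance you invoke in each step is the right one.
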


Here, we give a key Lemma to prove the verification Theorem, which says that viscosity solutions are stable under local uniform convergence. 
\begin{lemma}[Stability]\label{sequence_lemma_general}
  Let $F,(F_n)_{n\in\N}$ be continuous and degenerate elliptic such that for all $\mcK^*_T\subseteq \mcO_T\times\R\times\R^m\times\mcS(m)$
  $$
    |F-F_n|_0^{\mcK^*_T}\xrightarrow[n\rightarrow\infty]{}0,
  $$
  and $(u_n)_{n_\in\N}\subset C(\mcO_T)$ such that
  \begin{enumerate}
    \item for all $n$,  $\partial_t u_n(t,x) + F_n(t, x,u_n(t, x),D_x u_n(t, x),D^2_x u_n(t, x))\geq 0 $ $($resp. $\le0)$ for all $(t,x)\in\mcO_T$ in viscosity sense, \label{property_one _seq_lem_gen}
    \item there exists $u$ such that for each $\mcK_T\subseteq \mcO_T$ compact set one has 
    $$|u_n-u|_0^{\mcK_T}\xrightarrow[n\rightarrow\infty]{} 0.$$
  \end{enumerate}
  Then $u\in C(\mcO_T)$ satisfies $\partial_t u(t,x) + F(t, x,u(t, x),D_x u(t, x),D^2_x u(t, x))\geq 0 $ $($resp. $\le0)$  for all $(t,x)\in\mcO_T$ in viscosity sense.
\end{lemma}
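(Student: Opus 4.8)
The plan is to prove the supersolution assertion (the ``$\geq 0$'' case); the subsolution case is obtained by the symmetric argument (replacing local minima by local maxima and reversing every inequality). By Definition \ref{def_viscosity_sol} and Definition \ref{viscosity_property} it suffices to fix a $\mcC^{1,2}$ test function $\phi$ and a point $(t_0,x_0)\in\mcO_T$ at which $u-\phi$ has a local minimum, and to establish
$$
\partial_t\phi(t_0,x_0)+F\big(t_0,x_0,u(t_0,x_0),D_x\phi(t_0,x_0),D^2_x\phi(t_0,x_0)\big)\geq 0 .
$$
The first step is to reduce to a \emph{strict} minimum: replacing $\phi$ by $\phi(t,x)-|t-t_0|^2-|x-x_0|^4$ leaves the value, the time derivative, the gradient and the Hessian of $\phi$ at $(t_0,x_0)$ unchanged (the quadratic-in-$t$, quartic-in-$x$ penalty has vanishing first-order-in-$t$ and second-order-in-$x$ jet at $(t_0,x_0)$), while forcing $u-\phi$ to have a \emph{strict} local minimum there. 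Consequently there is a closed parabolic ball $\bbar B\subset\mcO_T$, containing $(t_0,x_0)$ in its interior, on which $(t_0,x_0)$ is the unique minimizer of $u-\phi$.

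The crux is to transfer this minimum to the approximants. For each $n$, the continuous function $u_n-\phi$ attains a minimum on the compact set $\bbar B$ at some $(t_n,x_n)$, and I claim $(t_n,x_n)\to(t_0,x_0)$. By compactness every subsequence has a sub-subsequence with $(t_{n_k},x_{n_k})\to(\bar t,\bar x)\in\bbar B$; writing
$$
(u_{n_k}-\phi)(t_{n_k},x_{n_k})=(u_{n_k}-u)(t_{n_k},x_{n_k})+(u-\phi)(t_{n_k},x_{n_k}),
$$
the first term tends to $0$ by local uniform convergence on $\bbar B$ and the second to $(u-\phi)(\bar t,\bar x)$ by continuity, while minimality gives $(u_{n_k}-\phi)(t_{n_k},x_{n_k})\leq(u_{n_k}-\phi)(t_0,x_0)\to(u-\phi)(t_0,x_0)$. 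Hence $(u-\phi)(\bar t,\bar x)\leq(u-\phi)(t_0,x_0)$, and strictness forces $(\bar t,\bar x)=(t_0,x_0)$; since every subsequence admits a sub-subsequence with the same limit, the whole sequence converges. In particular $(t_n,x_n)$ lies in the interior of $\bbar B$ for $n$ large, so $u_n-\phi$ has a genuine local minimum at $(t_n,x_n)$ in $\mcO_T$.

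Applying the viscosity supersolution property of $u_n$ for $F_n$ (hypothesis (1) of the Lemma) at $(t_n,x_n)$ with test function $\phi$ yields
$$
\partial_t\phi(t_n,x_n)+F_n\big(t_n,x_n,u_n(t_n,x_n),D_x\phi(t_n,x_n),D^2_x\phi(t_n,x_n)\big)\geq 0 .
$$
It remains to pass to the limit. Since $(t_n,x_n)$ stays in $\bbar B$, one has $u_n(t_n,x_n)\to u(t_0,x_0)$ (from the uniform convergence on $\bbar B$ together with continuity of $u$, itself a local uniform limit of the $u_n$), and $\partial_t\phi,D_x\phi,D^2_x\phi$ are continuous; thus the argument of $F_n$ runs through a fixed compact set $\mcK^*_T\subseteq\mcO_T\times\R\times\R^m\times\mcS(m)$. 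Splitting $F_n=(F_n-F)+F$, the first part is bounded by $|F-F_n|_0^{\mcK^*_T}\to 0$ and the second converges, by continuity of $F$, to $F(t_0,x_0,u(t_0,x_0),D_x\phi(t_0,x_0),D^2_x\phi(t_0,x_0))$; combined with $\partial_t\phi(t_n,x_n)\to\partial_t\phi(t_0,x_0)$ this gives the claimed inequality.

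The main obstacle is precisely the convergence $(t_n,x_n)\to(t_0,x_0)$ of the perturbed minimizers: this is the only point where more than bare continuity is used, and it forces the combination of the strictness arranged in the first step with the local uniform convergence of $(u_n)$. A secondary point needing care is the parabolic boundary: when $t_0=T$ the minimum is one-sided in time, but since the viscosity inequalities in Definition \ref{viscosity_property} are tested on all of $\mcO_T=(0,T]\times\mcO$, the localization and limiting arguments above apply verbatim.
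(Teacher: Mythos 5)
Your proof is correct and takes essentially the same route as the paper's: perturb the test function by $-|t-t_0|^2-|x-x_0|^4$ to make the minimum strict, extract minimizers $(t_n,x_n)$ of $u_n-\phi$ on a compact neighbourhood converging to $(t_0,x_0)$ via the local uniform convergence of $(u_n)$, apply the viscosity inequality for $F_n$ at $(t_n,x_n)$, and pass to the limit using continuity of $F$ and the locally uniform convergence $F_n\to F$ on a compact set containing all the arguments. The only differences are cosmetic: you perform the strictness reduction at the outset (the paper treats the non-strict case at the end with the same perturbation), and you spell out the subsequence argument for the convergence of the approximate minimizers, which the paper merely asserts.
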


\begin{proof}
  We only prove that $u$ satisfies $\partial_t u(t,x) + F(t, x,u(t, x),D_x u(t, x),D^2_x u(t, x))\geq 0$ in viscosity sense, the reverse inequality can be proven in the same way. Let $\phi\in \mcC^{1,2}(\mcO_T)$ and $(t,x)\in{\mcO_T}$ such that is a global minimum for $u-\phi$. We consider a compact neighbourhood $\mcK_T$ of $(t,x)$. Suppose the minimum is strict at $(t,x)$, then thanks to the local uniform convergence of $(u_n)_{n\in\N^*}$  exists a sequence of points $(t_n,x_n)_{n\in\N^*}$ eventually in the interior of $\mcK_T$ that are minima for the sequence $(u_n-\phi)_{n\in\N^*}$ and such that $(t_n,x_n)\rightarrow (t,x)$.
  Being $(t_n,x_n)$ minimizer for $u_n-\phi$ with $\phi$ smooth, then by \textit{(1)}
  \begin{equation*}
    0\le \partial_t \phi(t_n,x_n) + F_n(t_n,x_n,u_n(t_n,x_n),D_x\phi(t_n,x_n),D^2_x\phi(t_n,x_n)).
  \end{equation*}
  By uniform convergence of $u_n$ through $u$ over $\mcK_T$, one has $u_n(t_n,x_n)\rightarrow u(t,x)$. Then thanks to continuity of $D\phi$ and $D^2\phi$ one has that $(t_n,x_n,u_n(t_n,x_n),D_x u_n(t_n,x_n),D^2_x u_n(t_n,x_n) )_{n\in\N}\subset \mcK_T \times\mcR$ compact set of $(0,T]\times\mcO\times\R\times\R^m\times\mcS(d)$.
  So, thanks to uniformly convergence of $F_n$ through $F$  over $\mcK_T\times\mcR$  we conclude that
  \begin{multline}
    0\le \lim_{n\rightarrow\infty} \partial_t \phi(t_n,x_n) + F_n(t_n, x_n, u_n(t_n,x_n),D_x\phi(t_n,x_n),D^2_x\phi(t_n,x_n)) \\
    = \partial_t \phi(t,x) + F(t,x, u(t,x),D_x\phi(t,x),D^2_x\phi(t,x)).
  \end{multline}
 If the minimum is not strict we consider the function $\phi^*(s,y) = \phi(s,y)-(t-s)^2 -|x-y|^4$. This function $\phi^*$ is such that $u-\phi^*$ has strict min in $(t,x)$ and has same derivatives up to first order in $t$ and up to the second one in $(t,x)$, so we can apply the previous technique and conclude remarking
 \begin{multline*} 
  \lim_{n\rightarrow\infty} \partial_t \phi^*(t_n,x_n) + F_n(t_n,x_n, u_n(t_n,x_n), D_x\phi^*(t_n,x_n), D^2_x\phi^*(t_n,x_n))\\
  = \lim_{n\rightarrow\infty} \partial_t \phi(t_n,x_n) + F_n(t_n,x_n, u_n(t_n,x_n), D_x\phi(t_n,x_n), D^2_x\phi(t_n,x_n)). 
\end{multline*}
\end{proof}

We want to use the notion of viscosity solution to extend the verification results obtained in the previous section. Given 
\begin{equation}\label{u_sol_chap3}
  u(t,x,y)=\E\bigg[e^{\varrho (T-t)}f(X^{t,x,y}_T,Y^{t,y}_T)-\int_t^Te^{\varrho (s-t)} h(s,X^{t,x,y}_s,Y^{t,y}_s)ds\bigg],
\end{equation}
we will verify that $u^F$ (candidate forward solution)
\begin{equation}\label{uF_sol_chap3}
  u^F(t,x,y)=u(T-t,x,y)
\end{equation}
is a viscosity solution of the forward problem \eqref{reference_PDE_forward} with more general initial data $f$ (even with discontinuities) and source terms $h$. We emphasize that whenever we have $u$ solution of \eqref{reference_PDE}, we know that $u^F$ is a solution of \eqref{reference_PDE_forward} and vice versa.

\subsection{Continuous initial data}
In this subsection, we deal with problem \eqref{reference_PDE_forward} where the initial data $f$ is chosen to be just continuous.
%We introduce, now, a lemma that we will use to prove that our candidate solution $u$, defined in \eqref{candidate_solution_u}, can solve the problem \eqref{reference_PDE} in viscosity sense.

In the sequel, we will need some smoothing arguments, which can be resumed as follows.
\begin{lemma}\label{Mollifier_lemma}
  Let $f\in \mcC(\R\times\R_+)$ and $h\in \mcC([0,T]\times \R\times\R_+)$.
  Then there exist 
  \begin{itemize}
    \item $(f_n)_{n\in\N}\subset \mcC^ \infty(\R^2) $ such that $|f_n-f|_0^{\mcK}\rightarrow 0$ for every compact set $\mcK \subset \R\times\R_+$,
    \item $(h_n)_{n\in\N}\subset \mcC^ \infty(\R^3) $ such that $|h_n-h|_0^{\mcK_T}\rightarrow 0$ for every compact set $\mcK_T \subset [0,T]\times\R\times\R_+$.
  \end{itemize}
  Furthermore, if $f$ and $h$ are uniformly continuous and bounded then $|f_n-f|_0^{\R\times\R_+}\rightarrow 0$ and $|h_n-h|_0^{[0,T]\times\R\times\R_+}\rightarrow 0$.
\end{lemma}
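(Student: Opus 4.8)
The plan is to reduce the statement to the classical fact that convolution with a mollifier produces smooth approximations converging locally uniformly. The only subtlety is that $f$ and $h$ are defined on the \emph{closed} sets $\R\times\R_+$ and $[0,T]\times\R\times\R_+$, whose boundaries $\{y=0\}$ (and $\{t=0\}$, $\{t=T\}$) prevent a direct convolution from being defined and $\mcC^\infty$ on all of $\R^2$ (resp.\ $\R^3$). I therefore first extend $f$ and $h$ \emph{continuously} to the whole Euclidean space, then mollify.

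For $f$, I would set $\tilde f(x,y)=f(x,|y|)$ for $(x,y)\in\R^2$. Since $y\mapsto|y|$ is continuous and agrees with the identity on $\R_+$, the function $\tilde f$ is a continuous extension of $f$ to $\R^2$; moreover, because $\big||y_1|-|y_2|\big|\le|y_1-y_2|$, this reflection preserves both boundedness and uniform continuity. Analogously, for $h$ I would reflect in $y$ and clip in $t$: writing $\pi(t)=\max(0,\min(T,t))$, I set $\tilde h(t,x,y)=h(\pi(t),x,|y|)$, which for the same reason is a continuous extension of $h$ to $\R^3$ that preserves uniform continuity and boundedness.

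Next, fix a standard mollifier $\psi\in\mcC^\infty(\R^2)$ with $\psi\ge 0$, $\int_{\R^2}\psi=1$, supported in the unit ball, and set $\psi_\varepsilon(z)=\varepsilon^{-2}\psi(z/\varepsilon)$; define $f_n=\tilde f*\psi_{1/n}$, and similarly $h_n=\tilde h*\Psi_{1/n}$ for a three-dimensional mollifier $\Psi$ with $\Psi_\varepsilon(w)=\varepsilon^{-3}\Psi(w/\varepsilon)$. Differentiation under the integral sign gives $f_n\in\mcC^\infty(\R^2)$ and $h_n\in\mcC^\infty(\R^3)$. For the local uniform convergence, fix a compact $\mcK\subset\R\times\R_+$ and let $\mcK'$ be its closed unit neighbourhood. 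Writing $f_n(P)-\tilde f(P)=\int_{\R^2}\big(\tilde f(z)-\tilde f(P)\big)\psi_{1/n}(P-z)\,dz$, the integrand is supported where $|z-P|\le 1/n$, so that $|f_n-f|_0^{\mcK}\le\sup\{\,|\tilde f(P)-\tilde f(Q)|:P,Q\in\mcK',\ |P-Q|\le 1/n\,\}$, which tends to $0$ by uniform continuity of $\tilde f$ on the compact $\mcK'$. The identical estimate applies verbatim to $h_n$ on any compact $\mcK_T\subset[0,T]\times\R\times\R_+$.

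For the final claim, if $f$ (resp.\ $h$) is bounded and uniformly continuous on its domain, then so is $\tilde f$ (resp.\ $\tilde h$) on all of $\R^2$ (resp.\ $\R^3$) by the reflection/clipping estimates above; the displayed bound then holds with $\mcK'$ replaced by the whole space, yielding $|f_n-f|_0^{\R\times\R_+}\to 0$ and $|h_n-h|_0^{[0,T]\times\R\times\R_+}\to 0$. The only point requiring care---hence the main (if modest) obstacle---is precisely the boundary extension: one must produce a genuinely continuous extension to full Euclidean space that \emph{also} preserves uniform continuity and boundedness, which is exactly what the reflection-and-clipping construction guarantees, thereby avoiding any appeal to the Tietze theorem (whose extension need not preserve uniform continuity).
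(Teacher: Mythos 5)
Your proof is correct and follows essentially the same route as the paper: extend $f$ and $h$ continuously to all of $\R^2$ (resp.\ $\R^3$) and then mollify, the only cosmetic difference being that you extend by reflection $f(x,|y|)$ where the paper clips via $f(x,0\vee y)$ (both are $1$-Lipschitz in the extended variable, so both preserve uniform continuity and boundedness). You also write out the standard convolution estimate explicitly where the paper simply cites Proposition 4.21 of Brezis, but the substance is identical.
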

\begin{proof}
  We need only to extend $f$ and $h$ in a continous way respectively around $\R\times\R_+$ and $[0,T]\times \R\times\R_+$ and than to take convolution with a mollifier $(\varphi_n)_{n\in\N}$. We finish applying Proposition 4.21 of \cite{Brezis2010book}.  
  We start by extending $f$ and $h$ in the following continuous way
  \begin{equation*}
    \tf(x_1,x_2)= f(x_1,0\vee x_2) \quad{ and } \quad \tilde{h}(t,x_1,x_2) = h(0\vee t\wedge T,x_1,x_2).
  \end{equation*}
  If $f$ and $h$ are uniformly continuous and bounded, then the same proof in \cite{Brezis2010book} guarantees uniform convergence without any restriction over compact sets (so in $\R\times\R_+$ and $[0,T]\times\R\times\R_+)$.
\end{proof}

We recall that for every compact set $\mcK_T$ in $[0,T]\times\R\times\R_+$ and $p\in\N$, one has
  \begin{equation}
    \sup_{(t,x,y)\in\mcK_T} \E\left[\big|X_T^{t,x,y}\big|^p+\big|Y^{t,y}_T\big|^p\right] < \infty,
  \end{equation}
where $\big((X_T^{t,x,y},Y^{t,y}_T)\big)_{t\in[0,T]}$ denotes the solution to \eqref{referenceDiffusion}.

We are now ready to prove that \eqref{reference_PDE} has a viscosity solution, with quite general requests on the function $f$ giving the Cauchy condition. We underline that we do not operate any restriction on the parameters: no Feller's condition is required.

\begin{prop}\label{existence_viscsol_fcont}
  Let $f\in C(\R\times \R_+)$ and $h\in C([0,T)\times\R\times\R_+)$ be such that for all compact set $\mcK_T\in[0,T]\times\R\times\R_+$ there exists $p>1$ such that
  \begin{equation}\label{integrability_condition}
    \sup_{(t,x,y)\in\mcK_T}\|f(X_T^{t,x,y},Y^{t,y}_T)\big\|_{L^p(\Omega)}, \sup_{(t,x,y)\in\mcK_T}\int_t^T \|h(s,X^{t,x,y}_s,Y^{t,y}_s)\|_{L^p(\Omega)}ds<\infty .
  \end{equation}
  Then, $$u(t,x,y)=\E\left[e^{\varrho (T-t)}f(X_T^{t,x,y},Y^{t,y}_T)-\int_t^T e^{\varrho (s-t)} h(s,X^{t,x,y}_s,Y^{t,y}_s)ds\right]$$ is $C([0,T]\times\R\times\R_+)$ and is a viscosity solution to the PDE \eqref{reference_PDE}.
\end{prop}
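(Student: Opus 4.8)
The plan is to realize $u$ as a locally uniform limit of \emph{classical} solutions attached to smooth data, and then to invoke the stability of viscosity solutions (Lemma \ref{sequence_lemma_general}). It is convenient to work with the forward formulation: setting $u^F(t,x,y)=u(T-t,x,y)$ as in \eqref{uF_sol_chap3}, I will show that $u^F$ is a viscosity solution of \eqref{reference_PDE_forward}, which by the equivalence noted after \eqref{uF_sol_chap3} is the same as the claim for $u$. The terminal/initial datum comes for free: evaluating \eqref{u_gen_sol} at $t=T$ gives $X^{T,x,y}_T=x$, $Y^{T,y}_T=y$, so $u(T,x,y)=f(x,y)$ and hence $u^F(0,x,y)=f(x,y)$, i.e. the Cauchy condition holds with equality once continuity is established.

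First I would construct the approximating data. Using Lemma \ref{Mollifier_lemma} together with a spatial truncation, I produce $(f_n)\subset\mcC^\infty(\R\times\R_+)$ and $(h_n)\subset\mcC^\infty([0,T)\times\R\times\R_+)$ that are compactly supported --- hence trivially of polynomial growth with all the weighted H\"older seminorms finite --- and satisfy $|f_n-f|_0^{\mcK}\to0$, $|h_n-h|_0^{\mcK_T}\to0$ on every compact set. The truncation is essential: a mollification alone need not preserve polynomial growth of a merely continuous $f$, whereas the classical theory of Section \ref{classical_section} requires it. For such smooth, compactly supported data the hypotheses of Proposition \ref{hp_minimal_u_sol} (equivalently Proposition \ref{prop-reg-new} with $q=2$) hold, so the candidate
\[
u_n(t,x,y)=\E\Big[e^{\varrho(T-t)}f_n(X^{t,x,y}_T,Y^{t,y}_T)-\int_t^Te^{\varrho(s-t)}h_n(s,X^{t,x,y}_s,Y^{t,y}_s)\,ds\Big]
\]
is a classical solution of \eqref{reference_PDE} with data $(f_n,h_n)$, in particular continuous on $[0,T]\times\R\times\R_+$. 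Consequently $u_n^F$ is a classical, hence a viscosity, solution of \eqref{reference_PDE_forward} with operator $F_n$ obtained from \eqref{F_logheston} by replacing $h$ with $h_n$.

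The main obstacle is the third step, the local uniform convergence $u_n\to u$. Fix a compact $\mcK_T\subset[0,T]\times\R\times\R_+$ and split $u_n-u=A_n+B_n$ into the terminal and source contributions. For $A_n$, given $R>0$ I split the expectation according to whether $(X^{t,x,y}_T,Y^{t,y}_T)$ lies in the box $D_R=\{|x'|\le R,\ 0\le y'\le R\}$. On $D_R$ the integrand is bounded by $\sup_{D_R}|f_n-f|\to0$ by local uniform convergence; on the complement I apply H\"older's inequality with the exponent $p>1$ from \eqref{integrability_condition},
\[
\E\big[|(f_n-f)(X^{t,x,y}_T,Y^{t,y}_T)|\,\ind{(X^{t,x,y}_T,Y^{t,y}_T)\notin D_R}\big]\le \|(f_n-f)(X^{t,x,y}_T,Y^{t,y}_T)\|_{L^p}\,\P\big((X^{t,x,y}_T,Y^{t,y}_T)\notin D_R\big)^{1/p'}.
\]
The $L^p$ factor is bounded uniformly in $n$ and over $\mcK_T$ by \eqref{integrability_condition} (the truncated mollifications are dominated by a fixed continuous majorant of $f$, to which the same integrability applies), while the moment bound $\sup_{\mcK_T}\E[|X^{t,x,y}_T|^p+|Y^{t,y}_T|^p]<\infty$ recalled before the statement gives, through Markov's inequality, $\sup_{\mcK_T}\P((X^{t,x,y}_T,Y^{t,y}_T)\notin D_R)\to0$ as $R\to\infty$. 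Choosing $R$ large to control the tail uniformly and then $n$ large makes $\sup_{\mcK_T}|A_n|$ arbitrarily small; the identical scheme under the time integral, using the integral $L^p$ bound in \eqref{integrability_condition}, handles $B_n$. Since each $u_n$ is continuous and the convergence is uniform on compacts, the limit $u$ is continuous on $[0,T]\times\R\times\R_+$, proving the first claim.

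Finally, with $u_n^F\to u^F$ locally uniformly and $|F-F_n|_0^{\mcK^*_T}\to0$ (this reduces to $h_n\to h$ locally uniformly, the only term in which $F$ and $F_n$ differ), Lemma \ref{sequence_lemma_general} applies to both inequalities: each $u_n^F$, being classical, satisfies $\partial_t u_n^F+F_n(\cdots)\ge0$ and $\le0$ simultaneously, so the stability lemma transfers both to $u^F$, which is therefore a viscosity solution in the interior of $\mcO_T$. Combined with the Cauchy datum attained with equality, this shows $u^F$ is a viscosity solution of \eqref{reference_PDE_forward}, equivalently $u$ is a viscosity solution of \eqref{reference_PDE}, with no restriction on the parameters.
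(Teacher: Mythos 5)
Your strategy coincides with the paper's own proof: approximate $(f,h)$ by smooth, compactly supported data, use the classical theory of Section \ref{classical_section} to get classical (hence viscosity) solutions, prove local uniform convergence by a compact-set/tail splitting with H\"older and Markov inequalities, and conclude with the stability Lemma \ref{sequence_lemma_general}; the initial-datum and stability steps are fine. The gap is in the convergence step. Your H\"older bound requires $\sup_n\sup_{(t,x,y)\in\mcK_T}\|(f_n-f)(X^{t,x,y}_T,Y^{t,y}_T)\|_{L^p}<\infty$, and you justify this by asserting that the truncated mollifications are dominated by a fixed continuous majorant of $f$ ``to which the same integrability applies.'' But \eqref{integrability_condition} is a hypothesis on $f$ alone, and it does not transfer to a majorant. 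The natural majorant of all mollifications at scales $\le 1$ is $g(z)=\sup_{|w-z|\le 1}|f(w)|$, and $\|g(X^{t,x,y}_T,Y^{t,y}_T)\|_{L^p}$ can be infinite even when $f$ satisfies \eqref{integrability_condition}: take $f$ continuous with spikes of height $M_k\to\infty$ supported on intervals so narrow that $f(X^{t,x,y}_T,Y^{t,y}_T)\in L^p$ (the law of the diffusion gives the spikes negligible mass), while $g\ge M_k$ on an entire unit ball around each spike. The same unproved uniform bound is needed for your source-term estimate of $B_n$, so as written the local uniform convergence $u_n\to u$ is not established.

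The paper's proof avoids exactly this by ordering the two limits rather than merging them into one sequence. For fixed $R$, the truncation $f^R=f\zeta_R$ is uniformly continuous and bounded, so Lemma \ref{Mollifier_lemma} gives $|f^R-f^R_n|_0^{\R\times\R_+}\to0$, whence $|v^R-v^R_n|_0^{\mcK_T}\le e^{|\varrho| T}|f^R-f^R_n|_0^{\R\times\R_+}\to 0$ with no $L^p$ input at all; the H\"older/Markov tail argument is applied only to the truncation error $u-u^R$, i.e.\ to $f-f^R$, which is dominated by $|f|\ind{B_R(0)^c}$, so hypothesis \eqref{integrability_condition} on $f$ itself suffices; a diagonal choice $\hat u_n=u^{R_n}_{k(n)}$ then yields the local uniform convergence. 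Alternatively, you can salvage your one-parameter scheme by choosing the mollification scale adapted to the truncation radius (possible because each $f^R$ is uniformly continuous), which forces $|f_n|\le |f|+1$ pointwise; then your uniform $L^p$ bound genuinely does follow from \eqref{integrability_condition}, and the rest of your argument goes through unchanged.
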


\begin{proof}
  In order to simplify the proof, we write $u=v-w$ where
  $$
    v(t,x,y) = \E[e^{\varrho (T-t)}f(X_T^{t,x,y},Y^{t,y}_T)] \quad\text{ and }\quad w(t,x,y) = \E\left[\int_t^T e^{\varrho (s-t)} h(s,X^{t,x,y}_s,Y^{t,y}_s)ds\right],
  $$
  and we will show all the due convergences in the ``$w$'' part, being the $v$ part similar and simpler.
  Let $R>1/T$ and consider the smoothly truncated version $f^R$ and $h^R$ of $f$ and $h$, that is, $f^R(x,y)=f(x,y)\zeta_R(x,y)$ and $h^R(t,x,y)=h(t,x,y)\xi_R(t)\zeta_R(x,y)$, where  $\zeta_R$ and $\xi_R$ are smooth function such that 
  $$
  \ind{B(0,R)}\leq \zeta_R\leq \ind{B(0,R+1)}\quad \text{ and }\quad\ind{[0,T-\frac{1}{R}]}\leq \xi_R\leq \ind{[0,T-\frac{1}{R+1}]}.
  $$
  $f^R$ and $h^R$, being continuous and having compact support, are, in particular, uniformly continuous and bounded. Then by Lemma \ref{Mollifier_lemma} there exist two sequences $(f^R_n)_{n\in\N}$, $(h^R_n)_{n\in\N}$ that approximate in uniform norm $f^R$ over $\R\times\R_+$  and $h^R$ over $[0,T]\times\R\times\R_+$.
  We define in an intuitive way $v^R,v^R_n, w^R \text{ and } w^R_n$ as the functions obtained by replacing in $v$ and $w$ the functions $f$ and $h$ with $f^R$, $f^R_n$ and $h^R$, $h^R_n$. 
  %\begin{align*}
  %  u^R(t,x,y) &= \E[e^{\varrho (T-t)}f^R(X_T^{t,x,y},Y^{t,y}_T)] - \E\left[\int_t^T e^{\varrho (s-t)} h^R(s,X^{t,x,y}_s,Y^{t,y}_s)ds\right], \\
  %  u^R_n(t,x,y) &= \E[e^{\varrho (T-t)}f^R_n(X_T^{t,x,y},Y^{t,y}_T)] - \E\left[\int_t^T e^{\varrho (s-t)} h^R_n(s,X^{t,x,y}_s,Y^{t,y}_s)ds\right].
  %\end{align*}
  Being the sequences $(f^R_n)_{n\in\N}\subset C^\infty_c(\R\times\R_+)\subset \mcC^{\infty}_{\pol}(\R\times\R_+)$ and $(h^R_n)_{n\in\N}\subset C^\infty_c([0,T]\times\R\times\R_+)\subset \mcC^{\infty}_{\pol}([0,T]\times\R\times\R_+)$, they satisfy the regularity conditions in Proposition \ref{prop-reg-new}, then for all $n\in\N$, $u^R_n(t,x,y)$ is a $\mcC^\infty$ classical solution to \eqref{reference_PDE} with final value $f^R_n$ and source term $h^R_n$ instead of $h$. Now, chosen a compact set $\mcK_T\subset[0,T]\times\R\times\R_+$, for all $R>1/T$ one has
  \begin{align*}
    |u^R-u^R_n|_0^{\mcK_T} \le& \, |v^R-v^R_n|_0^{\mcK_T} +|w^R-w^R_n|_0^{\mcK_T} \\
    =& \sup_{(t,x,y)\in \mcK_T} \big|\E\big[e^{\varrho (T-t)}\big(f^R(X_T^{t,x,y},Y^{t,y}_T)-f^R_n(X_T^{t,x,y},Y^{t,y}_T)\big)\big]\big| \\
    &+ \sup_{(t,x,y)\in \mcK_T} \bigg|\E\Big[\int_t^T e^{\varrho (s-t)} \big(h^R(s,X^{t,x,y}_s,Y^{t,y}_s)-h^R_n(s,X^{t,x,y}_s,Y^{t,y}_s)\big)ds\Big]\bigg| \\
    \le& \, C_1|f^R-f^R_n|_0^{\R\times\R_+}+ C_2|h^R-h^R_n|_0^{[0,T]\times\R\times\R_+}\longrightarrow 0.
  \end{align*}
  We show now that $|w^R-w|_0^{\mcK_T}\rightarrow0$ when $R\rightarrow\infty$, similarly one can do the same for $v$ and get the convergence in uniform norm for $u$. We write $z=(x,y)$ and $ Z^{t,z}_T = (X_T^{t,x,y},Y^{t,y}_T)$ and show that 
  %\begin{align*}
  %  |u^R(t,z)-u(t,z)| &= |\E[f^R(Z^{t,z}_T)] - \E[f(Z^{t,z}_T)]|= |\E[f^R(Z^{t,z}_T) - f(Z^{t,z}_T)]|\\
  %  &=|\E[(f^R(Z^{t,z}_T) - f(Z^{t,z}_T))\mathds{1}_{Z^{t,z}_T\in B^C_R(0)}]|\le 2\E[|f(Z^{t,z}_T)|\mathds{1}_{Z^{t,z}_T\in B^C_R(0)}] \\
  %  &\le 2 ||f(X_T^{t,x,y},Y^{t,y}_T)||_{{\mathbb L}^p} \P(|Z^{t,z}_T|>R)^{\frac{p-1}{p}}, 
  %\end{align*}
  \begin{align*}
    \bigg|\E\Big[\int_t^T e^{\varrho (s-t)} \big(h(s,X^{t,x,y}_s,Y^{t,y}_s)-&h^R(s,X^{t,x,y}_s,Y^{t,y}_s)\big)ds\Big]\bigg|\\
    &\le 2e^{(0\vee\varrho)T} \E\Big[\int_t^T |h(s,Z^{t,z}_s)|\ind{([0,T-\frac{1}{R}]\times B_R(0))^C}(s,Z^{t,z}_s)ds\Big],   \\
    &\le 2e^{(0\vee\varrho)T} \int_t^T\E\big[ |h(s,Z^{t,z}_s)| \ind{B^C_R(0)}(Z^{t,z}_s)\big] ds,   \\
    &\le 2e^{(0\vee\varrho)T} \int_t^T \|h(s,Z^{t,z}_s)\|_{L^P(\Omega)}ds\, \P(|Z^{t,z}_T|>R)^{\frac{p-1}{p}}, 
  \end{align*}
  where we used the Tonelli Theorem to exchange the order of the expected value and the integral and the Hölder inequality to get the last inequality. Now, using Markov inequality,
  \begin{equation*}
    \P(|Z^{t,z}_T|>R)\leq \frac{\E[|Z^{t,z}_T|]}{R}.
  \end{equation*}
  Using this last inequality and passing to the supremum over $\mcK_T$, we get
  {\small
   \begin{equation*}
    |w^R-w|_0^{\mcK_T}\le C \sup_{(t,x,y)\in\mcK_T}\int_t^T \|h(s,X^{t,x,y}_s,Y^{t,y}_s)\|_{L^p(\Omega)}ds \frac{\sup_{(t,x,y)\in\mcK_T}\E[|(X_T^{t,x,y},Y^{t,y}_T)|]^{\frac{p-1}{p}}}{R^\frac{(p-1)}{p}}\xrightarrow[R\rightarrow\infty]{} 0,
   \end{equation*}}
   that proves the limit. 
  Furthermore,  thanks to triangular inequality, one has 
  $$
  |u-u^R_n|_0^{\mcK_T} \le  |u-u^R|_0^{\mcK_T} + |u^R-u^R_n|_0^{\mcK_T},
  $$
  so, for all $n\in\N^*$, we can pick $R_n$ such that the second norm on the right-hand side is upper bounded by $1/(2n)$. Then, replacing $R$ with $R_n$ in the first norm of the right-hand part of the inequality, it exists $k(n)$ such that this norm is upper bounded by $1/(2n)$ too. So we define $\hat{u}_n = u^{R_n}_{k(n)}$ and
  \begin{equation}\label{convergence_prop_existence_f_cont}
    |u-\hat{u}_n|_0^{\mcK_T}\le \frac{1}{n}.
  \end{equation} 
  The functions $(u^F_n)_{n\in\N}=\big(\hat{u}_n(T-\cdot,\cdot,\cdot)\big)_{n\in\N}$ are classical solutions (so in particular viscosity solutions) to \eqref{reference_PDE_forward} where we have $F_n$ instead of $F$ by replacing in it $h$ with $h^{R_n}_{k(n)}$. $u$ (and so $u^F$) is $C([0,T]\times\R\times\R_+)$ thanks to \eqref{convergence_prop_existence_f_cont}. Furthermore is easy to prove the uniform convergence hypothesis $F_n\rightarrow F$ (because $h^{R_n}_{k(n)}\rightarrow h$ uniformly over the compact sets of $[0,T)\times\R\times\R_+$), so thanks to Lemma \ref{sequence_lemma_general}, $u^F$ satisfies $$\partial_t u^F(t,x) + F(t, x,u^F(t, x),D_x u^F(t, x),D^2_x u^F(t, x))= 0 \text{ for } (t,x,y)\in(0,T]\times\R\times\R_+$$ in viscosity sense. Furthermore, $u^F(0,x,y)=f(x,y)$ for every $(x,y)\in\R\times\R_+$, so $u^F$ is a viscosity solution of \eqref{reference_PDE_forward} with initial value $f$.
\end{proof}

\begin{remark}
  The hypothesis \eqref{integrability_condition} with $p>1$ is not restrictive. For example all the functions $f\in C_{\pol}(\R\times\R_+)$, $h\in C_{\pol,T}(\R\times\R_+)$ satisfy this hypothesis for all $p>1$. In fact
  $$
  \sup_{(t,x,y)\in\mcK_T}\|f(X_T^{t,x,y},Y^{t,y}_T)\|_{L^p(\Omega)} \le [C(1 + \sup_{(t,x,y)\in\mcK_T}\E[|(X_T^{t,x,y}|^{ap}+|Y^{t,y}_T)|^{ap}])]^{\frac{1}{p}}< \infty,
  $$
  and
  $$
  \sup_{(t,x,y)\in\mcK_T}\int_t^T \|h(s,X^{t,x,y}_s,Y^{t,y}_s)\|_{L^p(\Omega)}ds \le C(1 + \sup_{(t,x,y)\in\hat{\mcK}_T}\E[|(X_T^{t,x,y}|^{ap}+|Y^{t,y}_T)|^{ap}])< \infty
  $$
  with $\hat{\mcK}_T=\{(t,x,y)\in[0,T]\times\R\times\R_+\mid \exists t_0\in[0,T] \text{ such that } (t_0,x,y)\in\mcK_T \}$.
\end{remark}

\subsection{Comparison principle and uniqueness for continuous initial data}
In this subsection, we prove a comparison principle for our reference PDE \eqref{reference_PDE_forward}. In Subsection \ref{discont_initial_data_subsec}, we prove this result allows getting uniqueness of solutions even for initial data $f$ that present ``some discontinuities''.

We start by stating two results that will be crucial to prove a comparison argument needed to prove the uniqueness of the solution. We first recall a lemma proved in \cite{CIL92}, Proposition 3.7.

\begin{lemma}\label{lemma_doubling}
  Let $M\in\N^*$, $\mcA$ be a subset of $\R^M$, $\Phi\in \USC(\mcA)$, $\Psi\in \LSC(\mcA)$,
  \begin{equation}
    M_\alpha = \sup_{\mcA}(\Phi(x)-\alpha\Psi(x))
  \end{equation}
  for $\alpha>0$. Let $\lim_{\alpha\rightarrow\infty} M_\alpha \in\R$ and $x_\alpha\in\mcA$ be chosen so that 
  \begin{equation}
    \lim_{\alpha\rightarrow\infty} \big(M_\alpha-(\Phi(x_\alpha)-\alpha\Psi(x_\alpha))\big)=0.
  \end{equation}
  Then, the following hold
  \begin{equation}\label{implication_lemma_doubling}
    \begin{cases}
      (i)&\lim_{\alpha\rightarrow\infty} \alpha\Psi(\alpha)=0, \\
      (ii)& \Psi(\hx)=0 \text{ and } \lim_{\alpha\rightarrow\infty} =\Phi(\hx)=\sup_{\{x\in\mcA\mid\Psi(x)=0\}}\Phi(x)\\
       &\qquad\qquad\qquad\text{whenever } \hx\in\mcA \text{ is a limit point of } x_\alpha \text{ as } \alpha\rightarrow\infty.
    \end{cases}
  \end{equation}
\end{lemma}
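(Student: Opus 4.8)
The plan is to follow the classical Crandall–Ishii–Lions argument, exploiting the monotonicity of $\alpha\mapsto M_\alpha$ together with the semicontinuity of $\Phi$ and $\Psi$ and the standing nonnegativity $\Psi\ge0$ that underlies this penalization setup. Set $\delta_\alpha:=M_\alpha-\big(\Phi(x_\alpha)-\alpha\Psi(x_\alpha)\big)\ge0$; by hypothesis $\delta_\alpha\to0$. Since $\Psi\ge0$, the map $\alpha\mapsto M_\alpha$ is nonincreasing, and being assumed to have a finite limit $M_\infty:=\lim_\alpha M_\alpha$ it satisfies $M_{\alpha/2}-M_\alpha\to0$ as $\alpha\to\infty$.

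First I would prove (i) by testing the supremum defining $M_{\alpha/2}$ against the near-maximizer $x_\alpha\in\mcA$:
\[
M_{\alpha/2}\ge \Phi(x_\alpha)-\tfrac{\alpha}{2}\Psi(x_\alpha)=\big(\Phi(x_\alpha)-\alpha\Psi(x_\alpha)\big)+\tfrac{\alpha}{2}\Psi(x_\alpha)=M_\alpha-\delta_\alpha+\tfrac{\alpha}{2}\Psi(x_\alpha).
\]
Rearranging gives $0\le \tfrac{\alpha}{2}\Psi(x_\alpha)\le (M_{\alpha/2}-M_\alpha)+\delta_\alpha$, whose right-hand side tends to $0$; hence $\alpha\Psi(x_\alpha)\to0$, which is (i).

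Next, for (ii), let $\hx\in\mcA$ be a limit point of $(x_\alpha)$, say $x_{\alpha_j}\to\hx$ with $\alpha_j\to\infty$. From (i), the relation $\alpha_j\Psi(x_{\alpha_j})\to0$ with $\alpha_j\to\infty$ forces $\Psi(x_{\alpha_j})\to0$, and lower semicontinuity of $\Psi$ together with $\Psi\ge0$ gives $0\le\Psi(\hx)\le\liminf_j\Psi(x_{\alpha_j})=0$, so $\Psi(\hx)=0$. To identify the value, one inequality comes from testing $M_\alpha$ at the fixed point $\hx$, namely $M_\alpha\ge\Phi(\hx)-\alpha\Psi(\hx)=\Phi(\hx)$, hence $M_\infty\ge\Phi(\hx)$; the reverse follows from
\[
M_{\alpha_j}-\delta_{\alpha_j}=\Phi(x_{\alpha_j})-\alpha_j\Psi(x_{\alpha_j})\le\Phi(x_{\alpha_j})
\]
by taking $\limsup_j$ and invoking upper semicontinuity of $\Phi$, which yields $M_\infty\le\limsup_j\Phi(x_{\alpha_j})\le\Phi(\hx)$. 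Thus $M_\infty=\Phi(\hx)$.

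Finally I would identify $M_\infty$ with $\sup_{\{x\in\mcA\mid\Psi(x)=0\}}\Phi(x)$: since $\Psi(\hx)=0$, the point $\hx$ lies in the constraint set, so $\Phi(\hx)\le\sup_{\{\Psi=0\}}\Phi$; conversely every $x$ with $\Psi(x)=0$ satisfies $M_\alpha\ge\Phi(x)-\alpha\Psi(x)=\Phi(x)$, whence $M_\infty\ge\sup_{\{\Psi=0\}}\Phi$, and using $M_\infty=\Phi(\hx)$ the two bounds close to equality. The result is standard, so I expect no genuine obstacle; the only subtle point is the halving device $M_{\alpha/2}$ used in (i), which is precisely what upgrades the qualitative convergence $M_\alpha\to M_\infty$ to the quantitative rate $\alpha\Psi(x_\alpha)\to0$. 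One must also keep the hypothesis $\Psi\ge0$ explicitly in force, since without it neither the monotonicity of $M_\alpha$ nor the sign conclusions survive.
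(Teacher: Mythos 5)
Your proof is correct and is essentially the argument of Crandall--Ishii--Lions \cite{CIL92} (Proposition 3.7), which is precisely the source the paper cites in lieu of giving its own proof: the halving device comparing $M_{\alpha/2}$ with $M_\alpha$ for (i), and the semicontinuity plus two-sided testing argument for (ii), are the standard ones. You were also right to insist on the hypothesis $\Psi\ge 0$, which the paper's statement omits but which is present in \cite{CIL92} and holds in the paper's application, where $\Psi$ is the quadratic penalization $\tfrac{1}{2}|\eta-\zeta|^2$.
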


\begin{prop}\label{semijet_estimates}
  Let $u_i\in\USC((0,T]\times\mcO_i)$ for $i=1,\ldots,k$ where $\mcO_i$ is s locally compact subset of $\R^{N_i}$. Let $\varphi$ be defined on an open neighborhood of $(0,T]\times\mcO_1\times\cdots\times\mcO_k$ and such that $\varphi:(t,x_1,\ldots,x_k)\mapsto \varphi(t,x_1,\ldots,x_k)$ is once continuously differentiable in $t$ and twice continuously differentiable in $(x_1,\ldots,x_k)\in\mcO_1\times\cdots\times\mcO_k$. Suppose that $\hat{t}\in(0,T]$, $\hx_i\in\mcO_i$ for $i=1,\ldots,k$ and
  \begin{align*}
    w(t,x_1,\ldots,x_k) &\equiv u_1(t,x_1) + \cdots + u_k(t,x_k) - \varphi(t,x_1,\ldots,x_k)\\
    &\le w(\hat{t},\hx_1,\ldots,\hx_k),
  \end{align*}
  for $0<t\le T$ and  $x_i\in\mcO_i$. Assume, moreover, that there is an $r>0$ such that for every $M>0$ there is a $C$ such that for $i=1,\ldots,k$
  \begin{multline}
    b_i\le C \text{ whenever } (b_i,q_i,X_i)\in\mcP^{2,+}_{\mcO_i}u_i(t,x_i),\\
    |x_i-\hx_i|+|t-\hat{t}|\le r \text{ and } |u_i(t,x_i)|+|q_i|+\|X_i\|\le M.
  \end{multline}
  Then for each $\varepsilon>0$ there are $X_i\in S(N_i)$ such that 
  \begin{equation}
    \begin{cases}
      (i)& (b_i,D_{x_i}\varphi(\hat{t},\hx_1,\ldots,\hx_k),X_i )\in\bmcP^{2,+}_{\mcO_i} u_i(\hat{t},\hx_i) \text{ for } i=1,\ldots,k, \\
      (ii)& -\left(\frac{1}{\varepsilon} +\|A\| \right) I \le \begin{pmatrix}
        X_{1} & \cdots  & 0 \\
        \vdots & \ddots & \vdots \\
        0& \cdots  & X_{k}  
    \end{pmatrix} \le A+\varepsilon A^2\\
      (iii)& b_1+\cdots+b_k = \partial_t\varphi(\hat{t},\hx_1,\ldots,\hx_k),
    \end{cases}
  \end{equation}
  where $A=(D^2_x\varphi)(\hat{t},\hx_1,\ldots,\hx_k)$.
\end{prop}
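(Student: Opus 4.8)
The statement is the parabolic version of the Crandall--Ishii \emph{theorem on sums}, so the plan is to reduce it to the time-independent (elliptic) case and then to prove the latter by the sup-convolution/Jensen/Alexandrov machinery. First I would dispose of the time variable: since $\hat t$ is fixed and the spatial conclusions (i)--(ii) concern only the frozen-time sections $x_i\mapsto u_i(\hat t, x_i)$, I would treat $t$ as a further common variable and exploit the structural hypothesis that the $b_i$-slots of $\mcP^{2,+}_{\mcO_i}u_i$ are locally bounded above. This upper bound is precisely what guarantees that, after the regularizations below, the time-derivative components stay in a compact range, so that one can pass to the closures $\bmcP^{2,+}_{\mcO_i}$ in (i) and recover (iii) $\sum_i b_i = \partial_t\varphi$ by summing the first-order-in-$t$ Taylor terms at the maximizer.

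For the spatial part I would regularize each $u_i$ by its sup-convolution $u_i^\lambda(x) = \sup_y\{u_i(y) - \tfrac{1}{2\lambda}|x-y|^2\}$. These functions are semiconvex (so that $u_i^\lambda + \tfrac{1}{2\lambda}|x|^2$ is convex), they decrease to $u_i$, and their second-order superjets relate to those of $u_i$; hence it suffices to produce the matrices $X_i$ for the semiconvex problem and then let $\lambda\to0$. After strictifying and localizing the maximum (adding a quartic penalty $|x-\hx|^4$ as in the proof of Lemma \ref{sequence_lemma_general}, and invoking Lemma \ref{lemma_doubling} with $\alpha\sim 1/\lambda$ to control the perturbed maximizers and to ensure the penalization terms vanish in the limit), the function $\Phi = \sum_i u_i^\lambda(x_i) - \varphi(x)$ is semiconvex with an interior maximum. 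By Alexandrov's theorem $\Phi$ is twice differentiable almost everywhere, and by Jensen's lemma there are points arbitrarily close to the maximizer at which $\Phi$ is twice differentiable, $D\Phi$ is small, and $D^2\Phi \le o(1)$. At such a point $D^2\big(\sum_i u_i^\lambda\big) \le D^2\varphi = A$ in the block sense, while semiconvexity forces $D^2 u_i^\lambda \ge -\tfrac1\lambda I$; reading off the diagonal blocks gives a first two-sided bound on $Z = \mathrm{diag}(X_1,\dots,X_k)$.

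The remaining work is to upgrade the raw bound $-\tfrac1\lambda I \le Z \le A$ to the sharp form (ii), with the correction $A+\varepsilon A^2$ and the lower bound $-(\tfrac1\varepsilon + \|A\|)I$. This is the purely linear-algebraic core of the theorem: one applies the Crandall--Ishii matrix lemma, choosing the regularization parameter $\lambda$ in terms of $\varepsilon$, which effectively replaces $A$ by $A(I-\lambda A)^{-1}$ and, after expanding, yields $Z \le A + \varepsilon A^2$, while the uniform semiconvexity bound produces $-(\tfrac1\varepsilon+\|A\|)I \le Z$. Finally I would let $\lambda\to0$: the approximate gradients converge to $D_{x_i}\varphi(\hat t,\hx)$, the matrices $X_i$ stay in the compact set cut out by (ii), and the $b_i$ stay bounded by hypothesis, so the limiting triples $(b_i, D_{x_i}\varphi, X_i)$ land in the closed superjets $\bmcP^{2,+}_{\mcO_i}u_i(\hat t,\hx_i)$, establishing (i) and (iii).

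I expect the main obstacle to be the matrix inequality in (ii): passing from the crude comparison $Z\le A$ to the exact $A+\varepsilon A^2$ form requires the careful algebraic lemma relating the sup-convolution parameter to $\varepsilon$, and it is here that all the rigidity of the statement is concentrated. Handling the time variable and the taking of closures is technically delicate but is controlled precisely by the stated upper bound on the $b_i$, so it is secondary to the matrix estimate. In practice, since this is a verbatim restatement of the classical result, one may alternatively cite \cite{CIL92} directly.
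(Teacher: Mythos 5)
Your outline reconstructs the classical Crandall--Ishii machinery (sup-convolution, Alexandrov/Jensen, and the matrix lemma that turns the crude bound into the sharp $A+\varepsilon A^2$ estimate), and for an interior-time maximum $\hat t\in(0,T)$ this is indeed how the theorem on sums is proved. But the proposition is \emph{not}, as you assert in closing, a verbatim restatement of the classical result, and citing \cite{CIL92} does not close the proof. The classical parabolic theorem on sums (Theorem 8.3 of \cite{CIL92}, Lemma 8 of \cite{CI90}) assumes $u_i\in\USC((0,T)\times\mcO_i)$ with $\hat t\in(0,T)$; the statement here assumes $u_i\in\USC((0,T]\times\mcO_i)$ and allows $\hat t=T$. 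That one-sided terminal case is exactly the content this proposition adds, and it is the entirety of the paper's proof: the paper follows \cite{CI90} and modifies the truncation step, setting the auxiliary functions $v_i$ equal to $-\infty$ only where $|x_i|>1$ and $t<s/2$ (i.e.\ never truncating near $t=T$), so that the auxiliary maximum times $t_{i,\delta}$ lie in $[s/2,T]$ and the remainder of the argument runs unchanged. Your sketch never engages with this: the localization and perturbation steps you describe need room around the maximizer in time, and at $\hat t=T$ the time neighborhood is one-sided, so either the construction must be redone one-sidedly (as the paper does) or what you end up proving is only the interior statement.

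This is not a pedantic distinction, because the paper genuinely needs $\hat t=T$. In the proof of the comparison principle (Proposition \ref{comparison_principle}), the doubled maximizers $(t_\a,\eta_\a,\zeta_\a)$ are only known to lie in $(0,T]\times\mcR\times\mcR$: the initial condition excludes $t_\a=0$, but nothing excludes $t_\a=T$, since the paper does not use the standard $\varepsilon/(T-t)$ penalization that forces maxima into the interior. A proof of Proposition \ref{semijet_estimates} valid only for $\hat t<T$ would therefore leave a hole downstream. To repair your proposal, either carry out the one-sided-in-time modification (the paper's redefinition of the $v_i$, exploiting that time enters only to first order so the expansion at $t=T$ is taken over $s\le T$), or prove the interior version and then amend the comparison argument by subtracting $\varepsilon/(T-t)$ from the subsolution; as written, your proposal does neither.
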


\begin{proof}
  We refer to the proof in \cite{CI90} with a small modification. Here, the $u_i$-s are $\USC$ up to $T$, so we must consider possible maximum points over ${T}\times\mcO_1\times\cdots\times\mcO_k$. In the proof \cite[Lemma 8]{CI90} we redifine the $v_i$-s functions equal to $-\infty$ only when $|x_i|>1$ and $t<s/2$, so  $t_{i,\delta}$ belongs to $[s/2,T]$ and the rest of the proof still the same.
\end{proof}
\
We can now state a comparison principle for semicontinuous functions of the problem \eqref{reference_PDE_forward}.

\begin{prop}\label{comparison_principle}
  \textbf{(Comparison principle)}
  Let $w\in\USC([0,T]\times\R\times\R_+)$ and  $v\in\LSC([0,T]\times\R\times\R_+)$ be respectively a subsolution and a supersolution to \eqref{reference_PDE_forward} with polynomial growth uniformly in time, where $F$ is as in \eqref{F_logheston}. Then $w\le v$.
\end{prop}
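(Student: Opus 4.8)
The natural plan is a doubling-of-variables argument built on the parabolic theorem on sums (Proposition~\ref{semijet_estimates}) and the maximization Lemma~\ref{lemma_doubling}. First I would reduce to a proper operator: since $F$ in \eqref{F_logheston} is proper only for $\varrho\le 0$, I set $\hat w=e^{-\varrho s}w$ and $\hat v=e^{-\varrho s}v$, which preserves the sub/supersolution property, the polynomial growth and the ordering while cancelling the zeroth-order term $-\varrho r$; hence I may assume $\varrho=0$. Arguing by contradiction, suppose $\sup_{[0,T]\times\R\times\R_+}(w-v)>0$. Since the initial data give $w(0,\cdot)\le f\le v(0,\cdot)$, this positive supremum must be attained (in the limit) at some time $s>0$. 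To tame the unbounded domain and the polynomial growth, I introduce a Lyapunov penalty $\Psi(x,y)=1+x^{2q}+y^{2q}$ with $q$ large enough to dominate the growth of $w$ and $v$; exactly as in the algebra of the classical uniqueness proof one checks $\mcL\Psi\le M\Psi$ for some $M>0$, so for $K>M$ the function $\theta e^{Ks}\Psi$ is a smooth \emph{strict} supersolution for every $\theta>0$.

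Next I would double the space variables. For $\alpha,\theta>0$ I consider
\[
\Phi_{\alpha}(s,z,z')=w(s,z)-v(s,z')-\tfrac{\alpha}{2}|z-z'|^{2}-\theta e^{Ks}\big(\Psi(z)+\Psi(z')\big),
\]
with $z=(x,y),\,z'=(x',y')\in\R\times\R_+$. The penalty makes $\Phi_\alpha$ coercive, so a maximum point $(\hat s_\alpha,\hat z_\alpha,\hat z'_\alpha)$ exists; the standard estimates behind Lemma~\ref{lemma_doubling} give $\alpha|\hat z_\alpha-\hat z'_\alpha|^{2}\to0$, $\hat z_\alpha,\hat z'_\alpha\to\bar z$, and $\hat s_\alpha\to\bar s>0$ once $\theta$ is fixed small. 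Applying Proposition~\ref{semijet_estimates} with $k=2$, $u_1=w$, $u_2=-v$ and $\varphi$ the full space-and-time penalty, I obtain $b_1,b_2$ and symmetric matrices $X_1,X_2$ with $(b_1,p_1,X_1)\in\bmcP^{2,+}_{\R\times\R_+}w(\hat s_\alpha,\hat z_\alpha)$ and $(-b_2,p_2,-X_2)\in\bmcP^{2,-}_{\R\times\R_+}v(\hat s_\alpha,\hat z'_\alpha)$, the relation $b_1+b_2=\theta Ke^{K\hat s_\alpha}(\Psi(\hat z_\alpha)+\Psi(\hat z'_\alpha))$, and the block bound $\mathrm{diag}(X_1,X_2)\le A+\varepsilon A^2$ with $A$ the Hessian of the penalty. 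Testing this bound against equal vectors gives $X_1\le -X_2=:Y_1$, up to a harmless $O(\theta)$ from the growth term.

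Subtracting the subsolution inequality for $w$ from the supersolution inequality for $v$ at these points, the resulting expression splits into: a time part $b_1+b_2>0$ together with the strict gain coming from $\theta e^{Ks}\Psi$; first-order drift terms, which are Lipschitz in $(x,y)$ and thus contribute $O(\alpha|\hat z_\alpha-\hat z'_\alpha|^2)+O(\theta)\to0$; and the second-order terms. Writing the constant diffusion matrix as $\Lambda$ and factoring $y\Lambda=b(y)b(y)^\top$ with $b(y)=\sqrt{y}\,\Sigma_0$ (so $\Sigma_0\Sigma_0^\top=\Lambda$), the trace estimate produced by $X_1\le Y_1$ yields
\[
\hat y_\alpha\,\mathrm{tr}(\Lambda X_1)-\hat y'_\alpha\,\mathrm{tr}(\Lambda Y_1)\le C\alpha\big(\sqrt{\hat y_\alpha}-\sqrt{\hat y'_\alpha}\big)^{2}.
\]
If this right-hand side tends to $0$, then letting $\alpha\to\infty$ leaves only the strictly positive time/penalty gain, contradicting that the subtracted inequality is $\le0$; sending $\theta\to0$ then gives $w\le v$.

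The crux, and the step I expect to be hardest, is precisely the control of $\alpha(\sqrt{\hat y_\alpha}-\sqrt{\hat y'_\alpha})^2$. Because $\sqrt{\cdot}$ is only $1/2$-Hölder one has merely $(\sqrt{\hat y_\alpha}-\sqrt{\hat y'_\alpha})^2\le|\hat y_\alpha-\hat y'_\alpha|$, so the bound is $C\alpha|\hat y_\alpha-\hat y'_\alpha|$, which \emph{does not} follow from $\alpha|\hat y_\alpha-\hat y'_\alpha|^2\to0$. This is the well-known obstruction caused by the non-Lipschitz square-root volatility, and it is exactly where the degeneracy at $\{y=0\}$ must be used. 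Away from the boundary, on $\{\bar y>0\}$, $\sqrt{\cdot}$ is locally Lipschitz, whence $(\sqrt{\hat y_\alpha}-\sqrt{\hat y'_\alpha})^2\le C\,\bar y^{-1}|\hat y_\alpha-\hat y'_\alpha|^2\to0$. The delicate case is $\bar y=0$: there I would exploit that the diffusion coefficient $\hat y_\alpha$ itself collapses and that the $y$-drift $a-by$ points strictly inward at $y=0$ (the Fichera condition $a>0$, which is why \eqref{reference_PDE_forward} requires no datum on $\{y=0\}$), so that the positive maximum cannot concentrate on the boundary. Concretely, decomposing $\hat y_\alpha\,\mathrm{tr}(\Lambda X_1)-\hat y'_\alpha\,\mathrm{tr}(\Lambda Y_1)=\hat y_\alpha\,\mathrm{tr}(\Lambda(X_1-Y_1))+(\hat y_\alpha-\hat y'_\alpha)\mathrm{tr}(\Lambda Y_1)$, the first summand is $\le0$ and the second must be absorbed through this inward-drift/boundary structure. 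Making this last absorption rigorous, as opposed to the routine doubling bookkeeping, is the heart of the proof.
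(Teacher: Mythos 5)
Your strategy is the same as the paper's (reduction to $\varrho=0$, a polynomial strict-supersolution correction, doubling of variables, the parabolic theorem on sums of Proposition \ref{semijet_estimates}), and your bookkeeping up to the second-order term is correct; indeed your bound $C\alpha\big(\sqrt{\hat y_\alpha}-\sqrt{\hat y'_\alpha}\big)^2$, obtained by testing the block inequality against vectors of the form $\big(\sqrt{\hat y_\alpha}\,\xi,\sqrt{\hat y'_\alpha}\,\xi\big)$, is sharper than the crude $C\alpha|\hat y_\alpha-\hat y'_\alpha|$. But the proposal is incomplete: the case $\bar y=0$ is never closed, and the mechanism you invoke for it cannot work as stated. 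In the subtracted inequality the two copies of $F$ in \eqref{F_logheston} carry the same drift $\mu_Y(y)=a-by$, so the constant $a$ cancels identically: what survives of the first-order terms is only $d(y^\zeta_\alpha-y^\eta_\alpha)\,\alpha(x^\zeta_\alpha-x^\eta_\alpha)-b\,\alpha(y^\zeta_\alpha-y^\eta_\alpha)^2$, and no term proportional to $a$ remains with which to ``absorb'' $C\alpha|\hat y_\alpha-\hat y'_\alpha|$. Using the inward drift would require constructing an explicit boundary barrier that forces the doubled maximizers away from $\{y=0\}$ (and it is precisely in such constructions that Feller-type restrictions tend to reappear in the literature); you do not attempt this, so the proof has a genuine gap.

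You should know, however, that you stalled at exactly the step that the paper's own proof passes over. The paper bounds the second-order difference by $\tfrac32(\lambda^2+\sigma^2)\,\alpha(y^\zeta_\alpha-y^\eta_\alpha)$ and asserts that it vanishes by item $(i)$ of Lemma \ref{lemma_doubling}; but that item only gives $\alpha|y^\zeta_\alpha-y^\eta_\alpha|^2\to0$, i.e. $\alpha|y^\zeta_\alpha-y^\eta_\alpha|=\sqrt{\alpha}\cdot o(1)$, which may diverge. For merely semicontinuous $w,v$ this is not a technicality: if, say, the $y$-sections of $w$ and $v$ behave like $\sqrt{y}$ at the limit point (a natural boundary behaviour for Heston value functions when $\sigma^2>2a$), one computes that the maximizers satisfy $|y^\eta_\alpha-y^\zeta_\alpha|\sim\alpha^{-2/3}$, hence $\alpha|y^\eta_\alpha-y^\zeta_\alpha|\sim\alpha^{1/3}\to\infty$ while $\alpha|y^\eta_\alpha-y^\zeta_\alpha|^2\to0$, so the conclusions of Lemma \ref{lemma_doubling} are fully consistent with the offending term not vanishing. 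Your diagnosis of the crux --- the non-Lipschitz square-root diffusion at the degenerate boundary --- is therefore exactly right; neither your proposal nor, as written, the paper's argument resolves it, and closing it would require either a penalty adapted to the degeneracy (e.g. doubling in the variable $\sqrt{y}$) or a genuine boundary barrier, not the routine quadratic doubling.
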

\begin{proof}
  It is simple to show (using the Definition \ref{def_viscosity_sol}) that if $w$ and $v$ are subsolution and supersolution to the general problem \eqref{reference_PDE_forward} with $\varrho\in\R$ then $e^{-\theta  t}w(t,x,y)$ and $e^{-\theta t}v(t,x,y)$ are subsolution and supersolution to \eqref{reference_PDE_forward} with $\varrho$ replaced by $\varrho-\theta$.
  So we need only to prove the result when $\varrho=0$.
  We start by remarking that $w$, $v$ and $h$ have polynomial growth uniformly in time, then
  \begin{equation}\label{pol_growth}
    \sup_{t\in[0,T]}|w(t,x,y)|,\sup_{t\in[0,T]}|v(t,x,y)|,\sup_{t\in[0,T]}|h(t,x,y)|\le C(1+x^L+y^L) \text{ for some } C>0,  L\in2\N^* 
  \end{equation} 
  
  We define the function $\phi_\varepsilon(t,x,y)=\varepsilon e^{Mt}(1+x^{L+2}+y^{L+2})$. It's easy to check that $\phi_\varepsilon$ is $C^\infty([0,T]\times\R\times\R_+)$ and, for every $\varepsilon$, $M>0$ can be chosen such that
  \begin{equation}\label{classical_strict_supersolution}
    \begin{cases}
      (\partial_t-\mcL)\phi_\varepsilon(t,x,y) \ge \varepsilon,\quad  \forall t\in(0,T],\, x\in\R,\, y\in\R_+, \\
      \phi_\varepsilon(0,x,y) \ge \varepsilon, \quad \forall x\in\R, y\in \R_+.
    \end{cases}
  \end{equation}
  Then, called $v_\varepsilon = v+\phi_\varepsilon$ and $\mcO=\R_+\times\R$, for all $(t,x,y)\in\mcO_T=(0,T]\times\mcO$ one has   
  \begin{multline*}
    \mcP^{2,-}_{\mcO_T}(v+\phi_\varepsilon)(t,x,y)= \big\{ (\alpha +\partial_t\phi_\varepsilon(t,x,y),(\beta,\gamma) +D_{(x,y)}\phi_\varepsilon(t,x,y), X + D^2_{(x,y)}\phi_\varepsilon(t,x,y))  \mid \\ (\alpha,(\beta,\gamma),X)\in\mcP^{2,-}_{\mcO_T}v(t,x,y) \big\},
  \end{multline*}
 and by the linearity of the reference PDE, one has that
 \begin{equation}\label{strict_supersolution}
  c + F(t,(x,y),v_\varepsilon(t,x,y),p,X)\geq \varepsilon,\text{ for all } (t,x,y)\in\mcO_T \text{ and } (c,p,X)\in\mcP_{\mcO_T}^{2,-}v_\varepsilon(t,x,y),
 \end{equation}
 and clearly $w-v_\varepsilon\le-\varepsilon$, that means $v_\varepsilon$ is a strict super-solution.
 Thanks to the growth hypothesis \eqref{pol_growth}
 \begin{equation*}
  w(t,x,y)-v_\varepsilon(t,x,y)\le 2C(1+x^L+y^L) -\varepsilon (1+x^{L+2}+y^{L+2}) \xrightarrow[|(x,y)|\rightarrow\infty]{} -\infty,
 \end{equation*}
 then there exists $R>0$ such that,  $w-v_\varepsilon\le0$ outside a rectangle $[0,T]\times\mcR$, where $\mcR=\times[-R,R]\times[0,R]$.
 We now suppose that there exist a point $(t_0,x_0,y_0)\in\mcR$ such that $(u-v_\varepsilon)(t_0,x_0,y_0)=\delta>0$. If such a point exists, $t_0$ must be $>0$ by the initial condition.
 In order to come up with a contradiction we use the well known technique in classical viscosity solutions framework of doubling the variables.
 We define, for all $\alpha>0$
 $$\varphi_\alpha:([0,T]\times\R^2\times\R^2)\ni (t,\eta,\zeta)\mapsto \frac{1}{2}|\eta-\zeta|^2\in\R_+.$$
 We penalize the function $w-v_\varepsilon$ subtracting the function $\alpha\varphi_\alpha$, while we double the spatial variables, 
 and study 
 $$M_\alpha=\sup_{(t,\eta,\zeta)\in[0,T]\times\mcR\times\mcR}w(t,\eta)-v_\varepsilon(t,\zeta) -\alpha\varphi_\alpha(t,\eta,\zeta). $$
 By the upper semi-continuity of the function $w(t,\eta)-v_\varepsilon(t,\zeta) -\alpha\varphi_\alpha(t,\eta,\zeta)$, and compactness of $[0,T]\times\mcR\times\mcR$, 
 $$\delta\le M_\a =  w(t_\a,\eta_\a)-v_\varepsilon(t_\a,\zeta_\a) -\alpha\varphi_\alpha(t_\a,\eta_\a,\zeta_\a)$$ for some $(t_\a,\eta_\a,\zeta_\a)\in[0,T]\times\mcR\times\mcR$. 
 We apply now Lemma \ref{lemma_doubling} with $\mcO= [0,T]\times\mcR\times\mcR$, $\Psi=\varphi$ and we chose the point $x_\alpha$ in the lemma to be the point $(t_\a,\eta_\a,\zeta_\a)$ that realizes the maximum $M_\alpha$. Then \eqref{implication_lemma_doubling} translates to
 \begin{equation}\label{implication_lemma_doubling_parab}
  \begin{cases}
    (i)&\lim_{\alpha\rightarrow\infty} \frac{\a}{2}|\eta_\a-\zeta_\a|^2=0, \\
    (ii)& \lim_{\alpha\rightarrow\infty} M_\a = u(\hat{t},\hat{\eta})-v_\varepsilon(\hat{t},\hat{\eta})=\sup_{\{(t,\eta)\in[0,T]\times\mcR\}}w(t,\eta)-v_\varepsilon(t,\eta)\\
     &\qquad\text{whenever } (\hat{t},\hat{\eta})\in[0,T]\times\mcR\ \text{ is a limit point of } (t_\a,\eta_\a) \text{ as } \alpha\rightarrow\infty.
  \end{cases}
 \end{equation}
 Now, because of the initial condition, \eqref{implication_lemma_doubling_parab} $(i)$ and $(ii)$, $(t_\a,\eta_\a,\zeta_\a)$ must lie inside $(0,T]\times\mcR\times\mcR$ for large $\a$.
 We want to show that there exists values in $\bmcP^{2,+}_{(0,T]\times\mcR}w(t_\a,\eta_\a)$ and $\bmcP^{2,-}_{(0,T]\times\mcR}v_\varepsilon(t_\a,\eta_\a)$ that are not compatible.
 We apply Proposition \ref{semijet_estimates} to the point $(t_\a,\eta_\a,\zeta_\a)$ with $u_1=w$ and $u_2=-v_\varepsilon$, $\varphi=\varphi_\a$, $\mcO_1=\mcO_2=\R\times\R_+$,  and $\varepsilon$ equal to $\alpha^{-1}$, one get
 $$
  (c_1,\alpha(\eta_\a-\zeta_\a),X_\alpha)\in\bmcP^{2,+}_\mcO w(t_\a,\eta_\a),\qquad (c_2,\alpha(\eta_\a-\zeta_\a),Y_\alpha)\in\bmcP^{2,-}_\mcO v_\varepsilon(t_\a,\zeta_\a)
 $$
 such that $c_1=c_2$ and
 \begin{equation*}
   -3\a\begin{pmatrix}
    I & 0 \\
    0 &  I
  \end{pmatrix} \le \begin{pmatrix}
    X_\a & 0 \\
    0 & -Y_\a
  \end{pmatrix} \le 3\a \begin{pmatrix}
    I  & -I \\
    -I &  I
  \end{pmatrix},
 \end{equation*}
 from which we derive
 \begin{equation}
  X_\a\le Y_\a \quad\text{ and }\quad |\langle X_\a z,z\rangle|\le 3 \alpha|z|^2.
 \end{equation}
 Using that $u$ is a sub-solution \eqref{subsol_2nd_def} and $v_\varepsilon$  is a strict super-solution \eqref{strict_supersolution}, one has
 \begin{align*}
  c_1 + F(t_\a,\eta_\a, w(t_\a,\eta_\a), \a(\eta_\a-\zeta_\a),X_\a)&\le 0, \\
  c_2 + F(t_\a,\zeta_\a, v_\varepsilon(t_\a,\zeta_\a), \a(\eta_\a-\zeta_\a),Y_\a)&\ge \varepsilon,
 \end{align*}
 using $X_\a\le Y_\alpha$, $v_\varepsilon(t_\a,\zeta_\a)\le w(t_\a,\eta_\a)$   and that $F$ is proper ($\varrho=0$) imply
 \begin{align*}
  \varepsilon &\le F(t_\a,\zeta_\a, v_\varepsilon(t_\a,\zeta_\a), \a(\eta_\a-\zeta_\a),Y_\a) -F(t_\a,\eta_\a,  w(t_\a,\eta_\a), \a(\eta_\a-\zeta_\a),X_\a) \\
  &\le F(t_\a,\zeta_\a, w(t_\a,\eta_\a), \a(\eta_\a-\zeta_\a),X_\a) - F(t_\a,\eta_\a,  w(t_\a,\eta_\a), \a(\eta_\a-\zeta_\a),X_\a).
 \end{align*}
 Naming $\eta_\a =(x^\eta_\a,y^\eta_\a)$, $\zeta_\a =(x^\zeta_\a,y^\zeta_\a)$, and using the definition of $F$ in \eqref{F_logheston} one has

 \begin{multline}
  \varepsilon\le \frac{(y^\zeta_\a-y^\eta_\a)}{2}(\lambda^2X^{1,1}_\a+2\rho\lambda\sigma X^{1,2}_\a+\sigma^2 X^{2,2}_\a)+d(y^\zeta_\a-y^\eta_\a)\alpha(x^\zeta_\a-x^\eta_\a) \\
  -b(y^\zeta_\a-y^\eta_\a)\alpha(y^\zeta_\a-y^\eta_\a) +h(T-t_\a,x^\zeta_\a,y^\zeta_\a)-h(T-t_\a,x^\eta_\a,y^\eta_\a).
 \end{multline}
 The first term on the right-hand side is upper bounded by $3/2(\lambda^2+\sigma^2)\alpha(y^\zeta_\a-y^\eta_\a)$, so the first three terms go to 0 thanks to \eqref{implication_lemma_doubling_parab} $(i)$ while $h(T-t_\a,x^\zeta_\a,y^\zeta_\a)-h(T-t_\a,x^\eta_\a,y^\eta_\a)$ goes to 0 when $\alpha\rightarrow\infty$ by continuity of $h$ (up to choose a subsequence of $\alpha$s for which $(t_\a,\eta_\a)\rightarrow(\hat{t},\hat{\eta})$). So $\varepsilon$, that is strictly positive, is upper bounded by a quantity that goes to 0 when $\alpha\rightarrow\infty$.
 This contradiction yields that there is no point $(t_0,x_0,y_0)$ in which $u-v_\varepsilon>0$ than $u\le v_\varepsilon$ for every $\varepsilon>0$ and so $u\le v$.
\end{proof}

We finish by stating and proving the two following results, which discuss the uniqueness of the reference PDE and the regularity of the solution.

\begin{corollary}
  The problem \eqref{reference_PDE} has a unique viscosity solution that is continuous over $[0,T]\times\R\times\R_+ $ and that has polynomial growth in $(x,y)$ uniformly in $t$. 
\end{corollary}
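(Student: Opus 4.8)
The plan is to split the statement into existence and uniqueness, letting Proposition~\ref{existence_viscsol_fcont} and the comparison principle of Proposition~\ref{comparison_principle} do essentially all the work; the uniqueness part is then just a double application of comparison.

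For existence, I would invoke Proposition~\ref{existence_viscsol_fcont}: the Feynman--Kac function $u$ defined in \eqref{u_gen_sol} is continuous on $[0,T]\times\R\times\R_+$ and is a viscosity solution of \eqref{reference_PDE}. To certify that it lies in the admissible class, I would check that it has polynomial growth in $(x,y)$ uniformly in $t$. This follows from the polynomial growth hypotheses on $f$ and $h$ together with the standard moment estimates for the SDE \eqref{referenceDiffusion}, whose moments $\E[|X^{t,x,y}_T|^p+|Y^{t,y}_T|^p]$ grow polynomially in the initial datum $(x,y)$: estimating $|u(t,x,y)|$ by Jensen and H\"older and using the explicit dependence of these moments on $(x,y)$ yields a bound of the form $C(1+|x|^L+y^L)$ uniform in $t\in[0,T]$.

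For uniqueness, suppose $u_1$ and $u_2$ are two viscosity solutions of \eqref{reference_PDE} in the stated class. Passing to the forward formulation via $u_i^F(t,x,y)=u_i(T-t,x,y)$, each $u_i^F$ is a continuous viscosity solution of \eqref{reference_PDE_forward} with initial value $f$ and polynomial growth uniformly in time. The key observation is that a continuous function is simultaneously $\USC$ and $\LSC$, so each $u_i^F$ is at once a subsolution and a supersolution to \eqref{reference_PDE_forward}. I would then apply Proposition~\ref{comparison_principle} twice: first with $w=u_1^F$ (as subsolution) and $v=u_2^F$ (as supersolution), obtaining $u_1^F\le u_2^F$; then with the roles of $u_1^F$ and $u_2^F$ exchanged, obtaining $u_2^F\le u_1^F$. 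Hence $u_1^F=u_2^F$ on $[0,T]\times\R\times\R_+$, i.e.\ $u_1=u_2$.

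The heavy analytic content having been absorbed into Proposition~\ref{comparison_principle}, no substantial obstacle remains. The only two points requiring care are verifying the uniform-in-time polynomial growth of the Feynman--Kac representative, so that it qualifies as an admissible solution and the comparison principle is applicable to it, and noting explicitly that continuity permits a single solution to play both the sub- and the supersolution role, which is exactly what makes the symmetric double application of comparison legitimate.
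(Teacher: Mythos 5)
Your proof is correct, but your uniqueness argument takes a genuinely different route from the paper's. The paper forms the difference $u=u_1-u_2$ of the two forward solutions, asserts that by linearity of the PDE $u$ is a viscosity solution of the homogeneous problem with zero initial data, and then applies Proposition \ref{comparison_principle} twice against the trivial solution $v\equiv 0$ to get $u\le 0$ and $u\ge 0$. You instead apply the comparison principle directly to the pair $(u_1^F,u_2^F)$, using that each continuous solution is simultaneously a subsolution and a supersolution with the same initial value $f$, and then symmetrize. Your route is in fact the more robust one: in the viscosity framework the sub- and supersolution properties are defined through test functions and do not in general survive subtraction of two viscosity solutions, so the paper's linearity step, while standard for classical solutions, requires additional justification for viscosity solutions (e.g.\ identifying viscosity with distributional solutions for linear equations); your symmetric double application of comparison avoids this issue entirely. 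Two minor remarks: (i) the corollary as stated is purely a uniqueness claim (no hypotheses on $f$ and $h$ appear in it), so your existence paragraph --- which implicitly needs polynomial growth assumptions on $f$ and $h$, and which the paper handles separately in Proposition \ref{uniqueness_and_regularity_f_cont} --- is extraneous here, though harmless; (ii) when invoking Proposition \ref{comparison_principle} it is worth saying explicitly that both solutions share the same initial value, so that the subsolution's initial datum is dominated by the supersolution's, since this is precisely what the comparison proof uses to push the maximum point of the doubled functional away from $\{t=0\}$.
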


\begin{proof}
  For simplicity, we consider the equivalent forward PDE \eqref{reference_PDE_forward} and two solutions $u_1$ and $u_2$ with polynomial growth. Then, by the linearity of the PDE, $u=u_1-u_2$ is a viscosity solution of
  \begin{equation}
    \begin{cases}
      (\partial_t-\mcL)u(t,x,y) = 0,\quad   t\in(0,T], &x\in\R, y\in\R_+, \\
      u(0,x,y) = 0, \quad  &x\in\R, y\in \R_+,
    \end{cases}
  \end{equation}
  with polynomial growth, so it exists $L\in2\N$ such that $|u(t,x,y)|\le C(1+x^L+y^L)$.
  We remark that $v(t,x,y)=0$ is a solution to the problem, then by Proposition \ref{comparison_principle} $u\le v$ and $u\ge v$, so $u_1=u_2$ is the unique solution.

\end{proof}

\begin{prop}\label{uniqueness_and_regularity_f_cont}
  Let $f\in \mcC_{\pol}(\R\times\R_+)$ and $h\in \mcC_{\pol,T}(\R\times\R_+)$.
  Then $u$ as in \eqref{u_gen_sol} is the unique viscosity solution of \eqref{reference_PDE} that belongs to $\mcC([0,T]\times\R\times\R_+)$ and that has polynomial growth in $(x,y)$ uniformly in $t$. 
  
  Furthermore, if $h$ is locally Hölder in the compact sets of $[0,T)\times\R\times\R^*_+$ then $u\in \mcC^{1,2}\big([0,T)\times(\R\times\R^*_+)\big)$.
\end{prop}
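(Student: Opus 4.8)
The plan is to separate the two assertions and reduce each to results already in hand. For existence, I would first note that the pair $(f,h)$ satisfies the hypotheses of Proposition~\ref{existence_viscsol_fcont}: since $f\in\mcC_{\pol}(\R\times\R_+)$ and $h\in\mcC_{\pol,T}(\R\times\R_+)$, the integrability condition \eqref{integrability_condition} holds for every $p>1$, exactly as in the remark following that proposition, the point being that $\sup_{(t,x,y)\in\mcK_T}\E\bigl[|X_T^{t,x,y}|^p+|Y_T^{t,y}|^p\bigr]<\infty$ on every compact $\mcK_T$. Proposition~\ref{existence_viscsol_fcont} then yields that $u$ of \eqref{u_gen_sol} is in $\mcC([0,T]\times\R\times\R_+)$ and is a viscosity solution of \eqref{reference_PDE}.

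Second, I would check that $u$ has polynomial growth in $(x,y)$ uniformly in $t$, which is the admissibility class of the comparison theorem. From $|f(x,y)|\le C(1+|x|^L+y^L)$ and the analogous bound on $h$, combined with the standard moment estimates for the CIR component (giving $\E[|Y_T^{t,y}|^p]\le C(1+y^p)$) and for $X^{t,x,y}$ (giving $\E[|X_T^{t,x,y}|^p]\le C(1+|x|^p+y^{p'})$), all uniform over $t\in[0,T]$, one bounds $|u(t,x,y)|$ by a polynomial in $(x,y)$ independent of $t$. Uniqueness is then immediate: any other continuous viscosity solution $\tilde u$ of polynomial growth is at once a sub- and a supersolution, so Proposition~\ref{comparison_principle} applied both ways gives $u\le\tilde u$ and $\tilde u\le u$, whence $u=\tilde u$; this is precisely the uniqueness already recorded in the Corollary above, specialised to our $u$.

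For the regularity claim the argument is local and rests on the fact that $\mcL$ is uniformly parabolic away from $\{y=0\}$. I would fix $(t_0,x_0,y_0)$ with $y_0>0$ and $t_0<T$ and a bounded parabolic cylinder $Q$ with $\overline Q\subset[0,T)\times\R\times\R_+^*$ and $(t_0,x_0,y_0)$ interior to $Q$. On $\overline Q$ the coordinate $y$ stays bounded away from $0$, so the diffusion matrix $\tfrac{y}{2}\bigl(\begin{smallmatrix}\lambda^2&\rho\lambda\sigma\\\rho\lambda\sigma&\sigma^2\end{smallmatrix}\bigr)$ is uniformly positive definite (this is where $\rho\in(-1,1)$ enters), the drift coefficients are smooth, and $h$ is Hölder continuous by hypothesis. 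Classical Schauder theory for the terminal--boundary value problem on $Q$ (cf.\ \cite{AF_book}) then produces a solution $\tilde u\in\mcC^{1,2}(Q)\cap\mcC(\overline Q)$ of \eqref{reference_PDE} on $Q$ attaining the continuous datum $u$ on the parabolic boundary of $Q$. Since a classical solution is a viscosity solution, $u$ and $\tilde u$ are two viscosity solutions on $Q$ sharing the same parabolic boundary values, and the comparison principle on the bounded cylinder --- a standard consequence of uniform parabolicity, simpler than Proposition~\ref{comparison_principle} --- forces $u=\tilde u$ on $Q$. Thus $u\in\mcC^{1,2}$ near $(t_0,x_0,y_0)$, and since the point is arbitrary, $u\in\mcC^{1,2}\bigl([0,T)\times(\R\times\R_+^*)\bigr)$.

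I expect the main obstacle to be the local identification step: one must ensure the classical solution $\tilde u$ genuinely attains the continuous boundary values $u$ on $\partial Q$ (which needs a domain regular enough to admit barriers --- a cylinder qualifies) and that the bounded-domain comparison principle applies with merely continuous lateral data. Everything else --- the moment bounds, the polynomial growth of $u$, and the invocation of Propositions~\ref{existence_viscsol_fcont} and~\ref{comparison_principle} --- is routine.
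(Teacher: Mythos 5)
Your proposal is correct and takes essentially the same route as the paper: existence, continuity and polynomial growth via Proposition~\ref{existence_viscsol_fcont}, uniqueness by applying the comparison principle (Proposition~\ref{comparison_principle}) both ways, and interior regularity by solving the terminal--boundary value problem classically on a small parabolic cylinder away from $\{y=0\}$ using Friedman's Schauder theory and then identifying the classical solution with $u$ via a bounded-domain comparison principle. The paper merely makes the cylinder explicit, namely $\mcR=(x-R,x+R)\times(y/2,2y)$ with $Q=[0,S)\times\mcR$, and notes that the bounded-domain comparison follows from the same technique with the simpler strictification term $\varepsilon e^{Mt}$ --- exactly the points you anticipated as the ``main obstacle.''
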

\begin{proof}
  The first hypotheses over $f$ and $h$ guarantee $u$ to be continuous over $[0,T]\times\R\times\R_+ $ and to have polynomial growth, so to be the unique solution. Furthermore, if $f$ and $h$ are locally Hölder in the compact sets of $[0,T)\times\R\times\R^*_+$, one can consider the PDE locally in a compact inside $[0,T)\times\R\times\R^*_+$ to prove further regularity.
  Let $t<S\in[0,T)$, $(x,y)\in\R\times\R_+^*$ and $\mcR=(x-R,x+R) \times (y/2,2y)$, $R>0$,  $Q=[0,S)\times\mcR$ and consider the PDE problem
  \begin{equation}
  \begin{cases}\label{parabolic_problem_bounded_domain}
  \partial_tv+ \mcL v+\varrho v= h,\qquad&\mbox{ in }Q,\\
  v=u,\qquad&\mbox{ in }\partial_0Q,
  \end{cases}	
  \end{equation}
  $\partial_0Q$ denoting the parabolic boundary of $Q$. The coefficients satisfy in $Q$ all the classical assumptions (see A. Friedman  \cite{AF_book} Theorem 9 and Corollary 2 in Chapter 3, Sec. 4), so a unique (bounded) solution $v\in \mcC^{1,2}([0,S)\times\mcR)\cap \mcC([0,S]\times\bar{\mcR})$ actually exists (and have Hölder continuous derivatives $v_t$, $D_{(x,y)} v$ and $D^2_{(x,y)}v$ in any compact set contained inside $Q$). In Proposition \ref{comparison_principle} we proved a comparison principle on the unbounded spatial domain $\R\times\R_+$ it is easy (even easier) to prove with the same techniques a comparison principle for the problem above \eqref{parabolic_problem_bounded_domain}: this time the ``strictificate'' the super-solution $v$ can just add the term $\phi_\varepsilon= \varepsilon e^{Mt}$. So $u$ that is a viscosity solution and $v$ that is a classic one (hence a viscosity one, too) must be the same over $\overline{Q}$. Then we have the regularity claimed for $u$ over a generic point $(t,x,y)\in[0,T)\times(\R\times\R^*_+)$.
\end{proof}

%DISCONTINOUS FUNCTIONS 
\subsection{Discontinuous initial data}\label{discont_initial_data_subsec}
Here, we present a general result that allows us to reduce the regularity of the initial data and consider functions that are not continuous everywhere. 

Let $f$ be a locally bounded function defined over a locally compact domain $\mcO$. We define the upper-semicontinuous envelope $f^*$ and the lower-semicontinuous envelope $f_*$ on $\bar{\mcO}$ by
  $$
  f^*(x) = \limsup_{z\rightarrow x} f(z), \quad f_*(x) = \liminf_{z\rightarrow x} f(z). $$
We call $D_f$ the set of the discontinuity points of $f$, i.e.
\begin{equation}\label{discontinuity_set}
  D_f = \{x\in\bmcO\mid f^*(x)\neq f_*(x) \}.
\end{equation} 
Now, we state and prove a result that says the lower semicontinuous and upper semicontinuous functions belong to the Baire class 1  functions (i.e. they are the pointwise limit of continuous functions) and can be approximated in a monotone way.

%\begin{prop}[Baire]\laabel{baire_approx}
%  Let $(X,d)$ a metric space. Let $f\in\LSC(X)$ bounded below by a continuous function $\chi^-$ then there exist a non-decreasing sequence $(f^-_n)_{n\in\N_*}\subset C(X)$ such that $f^-_n\rightarrow f$. If $f\in\USC(X)$ and bounded above by a continuous function $\chi^+$
%  then there exist a non-increasing sequence $(f^+_n)_{n\in\N_*}\subset C(X)$ such that $f^+_n\rightarrow f$ in $\mcO$.
%\end{prop}
%\begin{proof}
%  We refer to the proof of Proposition 11 in Section 2 of Chapter 9 of  \cite{Bourbaki66} \textcolor{red}{the proof is a bit different}.
%\end{proof}

\begin{prop}[Baire]\label{baire_approx}
  Let $X\subset\R^d$ closed. Let $f\in\LSC(X)$ then there exist a non-decreasing sequence $(f^-_n)_{n\in\N_*}\subset C(X)$ such that $f^-_n\rightarrow f$. If $f\in\USC(X)$ then there exist a non-increasing sequence $(f^+_n)_{n\in\N_*}\subset C(X)$ such that $f^+_n\rightarrow f$.
\end{prop}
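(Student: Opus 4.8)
The plan is to reduce the upper semicontinuous case to the lower semicontinuous one and then construct the monotone continuous approximants by an inf-convolution (Moreau--Yosida) regularization, after reparametrizing the range of $f$ so that the possible lack of a lower bound does not cause the regularization to degenerate.

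First I would dispose of the \USC case by symmetry: if $f\in\USC(X)$ then $-f\in\LSC(X)$, so a non-decreasing sequence $g_n\in C(X)$ with $g_n\nearrow -f$ produces the non-increasing sequence $f^+_n:=-g_n\in C(X)$ with $f^+_n\searrow f$. Hence it suffices to prove the \LSC statement. Next, to avoid the obstruction that $f$ may be unbounded below, I would fix an increasing homeomorphism $\psi:\R\to(-1,1)$, for instance $\psi(t)=t/(1+|t|)$, and set $F:=\psi\circ f$. Since $\psi$ is continuous and increasing, $F\in\LSC(X)$ and $-1\le F<1$, so $F$ is in particular bounded below.

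The core step is the inf-convolution. For $x\in X$ define
\[
  F_n(x)=\inf_{y\in X}\big\{F(y)+n\,|x-y|\big\}.
\]
I would verify four properties. (a) Each $F_n$ is $n$-Lipschitz, hence continuous, because for every $y$ one has $F(y)+n|x-y|\le F(y)+n|x'-y|+n|x-x'|$, and taking the infimum (then swapping $x,x'$) gives $|F_n(x)-F_n(x')|\le n|x-x'|$. (b) The sequence is non-decreasing in $n$, since for each fixed $y$ the quantity $F(y)+n|x-y|$ is non-decreasing in $n$, and the infimum of a pointwise larger family is larger. (c) Taking $y=x$ gives $F_n\le F$, while $F\ge -1$ gives $F_n\ge -1$. (d) For each $x$, $F_n(x)\to F(x)$: choosing near-minimizers $y_n$ with $F(y_n)+n|x-y_n|\le F_n(x)+1/n\le F(x)+1/n$, the bound $F(y_n)\ge -1$ forces $n|x-y_n|$ to stay bounded, hence $y_n\to x$; then $F_n(x)\ge F(y_n)-1/n$, and lower semicontinuity yields $\liminf_n F_n(x)\ge\liminf_n F(y_n)\ge F(x)$, which combined with $F_n\le F$ gives the limit. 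The same lower semicontinuity argument shows moreover that $F_n(x)>-1$ strictly, since a near-minimizing sequence with $F(y_n)\to -1$ and $y_n\to x$ would contradict $\liminf_n F(y_n)\ge F(x)>-1$. Thus $F_n$ takes values in $(-1,1)$.

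Finally I would set $f^-_n:=\psi^{-1}\circ F_n$. Because $F_n\in(-1,1)$ and $\psi^{-1}:(-1,1)\to\R$ is continuous and increasing, each $f^-_n$ is real-valued and continuous; monotonicity of $\psi^{-1}$ transports (b) into $f^-_n\le f^-_{n+1}$, and continuity of $\psi^{-1}$ transports (d) into $f^-_n(x)=\psi^{-1}(F_n(x))\to\psi^{-1}(F(x))=f(x)$, giving the required non-decreasing sequence. The main obstacle is precisely the unboundedness below of a general $f\in\LSC(X)$, which would make the naive inf-convolution $\inf_y\{f(y)+n|x-y|\}$ collapse to $-\infty$ (already for $f(y)=-|y|^2$); the homeomorphism $\psi$ compressing the range into $(-1,1)$ is the device that circumvents this while preserving both monotonicity and semicontinuity.
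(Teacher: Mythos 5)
Your proof is correct, but it follows a genuinely more self-contained route than the paper's. Both arguments share two ingredients: the reduction of the \USC{} case to the \LSC{} case by negation, and a range-compressing map of the form $x\mapsto x/(1+x)$. The paper, however, first reduces to a \emph{non-negative} $f$ by subtracting a continuous minorant $h\le f$ (whose existence for an \LSC{} function unbounded below is asserted without proof, and is itself a nontrivial fact of the same flavour as the proposition), then compresses $[0,\infty)$ into $[0,1)$ and delegates the core approximation of the resulting bounded \LSC{} function to Bourbaki (Proposition 11, Section 2, Chapter 9 of the cited reference), finally undoing the compression. You avoid the minorant step entirely by compressing all of $\R$ into $(-1,1)$ with $\psi(t)=t/(1+|t|)$, and you replace the citation by an explicit Moreau--Yosida inf-convolution $F_n(x)=\inf_{y\in X}\{F(y)+n|x-y|\}$, verifying Lipschitz continuity, monotonicity in $n$, pointwise convergence via lower semicontinuity, and---the one delicate point, which you handle correctly---the strict bound $F_n>-1$ ensuring that $\psi^{-1}\circ F_n$ is finite and continuous. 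What the paper's sketch buys is brevity; what yours buys is a complete, self-contained proof that needs no external reference, works verbatim in any metric space, and closes the gap left by the unproven existence of the continuous minorant.
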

\begin{proof}
  We prove only the first result; the second follows from the first, considering $-f$.
  We now sketch the proof.
  Let $f\le 0$, for $f$ not bounded below, we can find a continuous function $h\le f$, and we apply what follows to $\hf=f-h$, find approximations $\hf_n$ and define $f_n=\hf_n+h$.
  We consider the continuous function $g:x\rightarrow x/(1+x)$, prove the result for $\tf= g(f)$ taking values in $[0,1]$ (closed set) as in the proof in Proposition 11 in Section 2 of Chapter 9 of \cite{Bourbaki66}, consider the approximations $\tf_n$ given by proving the result for $\tf$ and then name $f_n=\tf_n/(1-\tf_n)$.
\end{proof}

We want to show that $(t,x,y)\mapsto \E[e^{\varrho(T-t)}f(X_T^{t,x,y},Y^{t,y}_T)]$ is a continuous mapping for $t<T$, even when the final data is discontinuous.
To this purpose, we use the fact that the distribution of the couple $(X^{t,x,y}_T,Y^{t,y}_T)$ for $t<T$ is absolutely continuous (with respect to Lebesgue measure). In fact, in \cite{PP2015}, it has been shown that $(\exp(X^{t,x,y}_T),Y^{t,y}_T)$ has a $\mcC^\infty(\R^*_+\times\R^*_+)$ density so using a change of variable argument, it easily follows that $(X^{t,x,y}_T,Y^{t,y}_T)$ has a $\mcC^\infty$ density over $\R\times\R^*_+$. 

\begin{prop}\label{continuity_prop}
  Let $f:\R\times\R_+\mapsto\R$ be a function such that the closure $\bDf$ of the set of its discontinuity points has zero Lebesgue measure. We assume that for all compact set $\mcK_T\in[0,T]\times\R\times\R_+$ there exists $p>1$ such that 
  $$
  \sup_{(t,x,y)\in\mcK_T}\|f(X_T^{t,x,y},Y^{t,y}_T)\|_{L^p(\Omega)}<\infty.
  $$ 
  Then $v(t,x,y)=\E[e^{\varrho(T-t)}f(X_T^{t,x,y},Y^{t,y}_T)]\in \mcC\big(([0,T]\times\R\times\R_+)\backslash (\{T\}\times \bDf) \big)$.
\end{prop}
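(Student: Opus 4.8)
The plan is to verify continuity at each admissible point $P_0=(t_0,x_0,y_0)$ by taking an arbitrary sequence $P_n=(t_n,x_n,y_n)\to P_0$ in $[0,T]\times\R\times\R_+$ and proving $v(P_n)\to v(P_0)$, splitting into the case $t_0<T$ and the terminal case $t_0=T$ with $(x_0,y_0)\notin\bDf$. Two facts will be used in both cases. First, since $P_n$ eventually lies in a fixed compact $\mcK_T$, the integrability hypothesis furnishes $p>1$ with $\sup_n\|f(X_T^{t_n,x_n,y_n},Y_T^{t_n,y_n})\|_{L^p(\Omega)}<\infty$; boundedness in $L^p$ with $p>1$ makes the family $\{e^{\varrho(T-t_n)}f(X_T^{t_n,x_n,y_n},Y_T^{t_n,y_n})\}_n$ uniformly integrable, the exponential factor being bounded on $[0,T]$. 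Hence it suffices to prove convergence in probability of the integrands, since uniform integrability then upgrades this to convergence of the expectations $v(P_n)$.

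The common probabilistic ingredient is the convergence in probability of the terminal vector $Z_n:=(X_T^{t_n,x_n,y_n},Y_T^{t_n,y_n})$ towards $Z_0:=(X_T^{t_0,x_0,y_0},Y_T^{t_0,y_0})$, where one reads $Z_0:=(x_0,y_0)$ when $t_0=T$. Realizing all the solutions on one probability space driven by a common Brownian motion, I would obtain $Z_n\to Z_0$ in probability from continuous dependence of the solution of \eqref{referenceDiffusion} on the starting time and the starting point. Because the diffusion coefficient $\sqrt{\,\cdot\,}$ is not Lipschitz, I would appeal to the known pathwise/moment stability estimates for the CIR component $Y$ and transfer them to $X$, which is driven by $Y$. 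In the terminal case $t_0=T$ this is elementary: directly from \eqref{referenceDiffusion} and the moment bounds one gets $\E[|Y_T^{t_n,y_n}-y_n|^2]+\E[|X_T^{t_n,x_n,y_n}-x_n|^2]=O(T-t_n)\to0$, which together with $(x_n,y_n)\to(x_0,y_0)$ gives $Z_n\to(x_0,y_0)$ in $L^2$, hence in probability.

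To conclude when $t_0<T$, I would use that $Z_0$ has an absolutely continuous law on $\R\times\R_+$ — the smooth density on $\R\times\R^*_+$ recalled just before the statement, carrying no mass on $\{y=0\}$ — so that $\P(Z_0\in\bDf)=0$ because $\bDf$ is Lebesgue-null. Thus $Z_0$ is almost surely a continuity point of $f$, and a subsequence argument (every subsequence of $Z_n$ has a further a.s.-convergent one, along which $f(Z_{n_k})\to f(Z_0)$ on the full-probability event $\{Z_0\notin\bDf\}$) yields $f(Z_n)\to f(Z_0)$ in probability. When $t_0=T$ and $(x_0,y_0)\notin\bDf$, the point $(x_0,y_0)$ lies in the open set $\bDf^{\,c}$ on which $f$ is continuous, so $Z_n\to(x_0,y_0)$ in probability forces $f(Z_n)\to f(x_0,y_0)=v(T,x_0,y_0)$ in probability; the indices with $t_n=T$, for which $v(T,x_n,y_n)=f(x_n,y_n)$, are covered by the same local continuity of $f$. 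Uniform integrability then gives $v(P_n)\to v(P_0)$ in all cases.

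The main obstacle I expect is the stability step for $t_0<T$, namely the continuous dependence of the terminal law on the starting time and position in spite of the non-Lipschitz $\sqrt{\,\cdot\,}$ coefficient; I would rely on established CIR stability results, or alternatively prove weak convergence $Z_n\Rightarrow Z_0$ through convergence of the transition densities and invoke the Skorokhod representation theorem. Once this is secured, the decisive point is the absolute-continuity argument $\P(Z_0\in\bDf)=0$, which is exactly what allows the measure-zero discontinuities of $f$ to be disregarded in the limit.
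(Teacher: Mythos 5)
Your proposal is correct, but it follows a genuinely different route from the paper. The paper works entirely at the level of laws: it invokes convergence in distribution of $(X_T^{t_k,x_k,y_k},Y_T^{t_k,y_k})$ to $(X_T^{t,x,y},Y_T^{t,y})$, splits $\R\times\R_+$ into a large ball minus a $\delta$-neighborhood of $\bDf$ (a compact set $A^\delta_R$ on which $f$ is continuous and bounded), the $\delta$-neighborhood $C^\delta_R$, and the complement of the ball; it then controls the last two pieces by H\"older's and Markov's inequalities together with the $L^p$ hypothesis, and handles $A^\delta_R$ by a Portmanteau-type argument (the function $f\ind{A^\delta_R}$ is bounded with discontinuity set that is null for the absolutely continuous limit law). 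You instead couple all processes on one probability space, upgrade the input to convergence \emph{in probability} of the terminal values (which requires pathwise/$L^1$ stability of the CIR flow in its initial time and position -- true and standard, via the comparison theorem and $|\sqrt{a}-\sqrt{b}|^2\le|a-b|$, but a strictly stronger input than the convergence in law the paper uses), then dispose of $\bDf$ by the same absolute-continuity observation and conclude by the subsequence criterion plus uniform integrability (Vitali) in place of the paper's explicit truncation estimates. What each approach buys: the paper's argument needs only Feller-type continuity of the law, at the cost of the more laborious $\varepsilon$-bookkeeping over the sets $A^\delta_R$, $C^\delta_R$; yours is shorter and more transparent once the pathwise stability is granted, and your Skorokhod-representation fallback would reduce the required input exactly to the paper's (convergence in law), making the two proofs differ then only in packaging (UI versus H\"older truncation). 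Both proofs rest on the same decisive facts: the $L^p$ bound with $p>1$ and the absolute continuity of the terminal law, which kills the Lebesgue-null set $\bDf$.
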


\begin{proof}
  Since $(t,x,y)\mapsto e^{\varrho(T-t)}$ is continuous, we can consider the case $\varrho=0$. 
  %To prove continuity of $v$ is sufficient to show the continuity of $v^\pm$. 
  We consider $(t,x,y)\in[0,T)\times\R\times\R_+$ and a sequence $(t_k,x_k,y_k)$ that converges towards it, we want to show that 
  \begin{equation*}
    |\E[f(X_T^{t_k,x_k,y_k},Y^{t_k,y_k}_T)] - \E[f(X_T^{t,x,y},Y^{t,y}_T)]| \xrightarrow[k\rightarrow\infty]{} 0.
  \end{equation*}
  Let $\varepsilon>0$. First, thanks to Hölder inequality
  \begin{multline*}
    |\E[f(X_T^{t_k,x_k,y_k},Y^{t_k,y_k}_T)\mathds{1}_{|(X_T^{t_k,x_k,y_k},Y^{t_k,y_k}_T)|>R}] |\le \\
     || f(X_T^{t_k,x_k,y_k},Y^{t_k,y_k}_T)||_{L^p(\Omega)} \P(|(X_T^{t_k,x_k,y_k},Y^{t_k,y_k}_T)|>R)^{\frac{p}{p-1}}.
  \end{multline*}
  One can choose $R$ such that the same inequality holds replacing $(X_T^{t_k,x_k,y_k},Y^{t_k,y_k}_T)$ with $(X_T^{t,x,y},Y^{t,y}_T)$. So, for $R$ big enough, one has
  \begin{multline*}
    |\E[f(X_T^{t_k,x_k,y_k},Y^{t_k,y_k}_T)] - \E[f(X_T^{t,x,y},Y^{t,y}_T)]| \le \\
    |\E[f(X_T^{t_k,x_k,y_k},Y^{t_k,y_k}_T)\mathds{1}_{|(X_T^{t_k,x_k,y_k},Y^{t_k,y_k}_T)|\le R}]- \E[f(X_T^{t,x,y},Y^{t,y}_T)\mathds{1}_{|(X_T^{t,x,y},Y^{t,y}_T)|\le R}]| +2\varepsilon.
  \end{multline*}
  Let $\delta>0$, we define the compact set $A^\delta_R=\{z\in\R\times\R_+ \mid d(z,\bDf)\ge \delta, |z|\le R \}$ and $C^\delta_R=\{z\in\R\times\R_+ \mid d(z,\bDf)< \delta, |z|\le R \}$. $C^\delta_R \searrow \bDf\cap \overline{B_R(0)}$ that is a null set, so thanks to the absolute continuity of $(X_T^{t,x,y},Y^{t,y}_T)$ one has 
  \begin{equation}\label{conv_prob_0}
    \P((X_T^{t,x,y},Y^{t,y}_T)\in C^\delta_R)\rightarrow 0 \text{ when } \delta\rightarrow0.
  \end{equation}
  Furthermore, the boundary $\partial C^\delta_R$ is a null set. In fact, it is contained in $\bDf \cup \partial B_R(0)\cup \{ z\in \R\times\R_+ \mid d(z,\bDf)= \delta \}$ that are three null sets, the first by hypothesis and the second two being sets whose points are exactly distant a strictly positive number from closed sets ($\{0\}$ and $\bDf$) (look here \cite{Erdos1945} for a simple proof). So, by convergence in distribution of $(X_T^{t_k,x_k,y_k},Y^{t_k,y_k}_T)$ towards $(X_T^{t,x,y},Y^{t,y}_T)$
  \begin{equation}\label{conv_distrib}
    \P((X_T^{t_k,x_k,y_k},Y^{t_k,y_k}_T)\in C^\delta_R)\xrightarrow[k\rightarrow \infty]{} \P((X_T^{t,x,y},Y^{t,y}_T)\in C^\delta_R).
  \end{equation}
  Thanks to the following inequality
  \begin{multline*}
    |\E[f(X_T^{t_k,x_k,y_k},Y^{t_k,y_k}_T)\mathds{1}_{(X_T^{t_k,x_k,y_k},Y^{t_k,y_k}_T)\in C^\delta_R}] | \le \\
     || f(X_T^{t_k,x_k,y_k},Y^{t_k,y_k}_T)||_{L^P(\Omega)} \P((X_T^{t_k,x_k,y_k},Y^{t_k,y_k}_T)\in C^\delta_R)^{\frac{p}{p-1}},
  \end{multline*}
  that still valid replacing $(t_k,x_k,y_k)$ with $(t,x,y)$, using the uniform boundedness in $L^P$ hypothesis, \eqref{conv_prob_0} and \eqref{conv_distrib}, if $\delta$ is small enough and $k\ge k_0$ one has
  \begin{equation*}
    |\E[f(X_T^{t_k,x_k,y_k},Y^{t_k,y_k}_T)\mathds{1}_{(X_T^{t_k,x_k,y_k},Y^{t_k,y_k}_T)\in C^\delta_R}] - \E[f(X_T^{t,x,y},Y^{t,y}_T)\mathds{1}_{(X_T^{t,x,y},Y^{t,y}_T)\in C^\delta_R}]|\le 2\varepsilon.
  \end{equation*}
  The set $\partial A^\delta_R$ is a null set, because it is contained in $\partial B_R(0)\cup \{ z\in \R\times\R_+ \mid d(z,\bDf)= \delta \}$, that are two null set, as explained above.
  So, the function $f\mathds{1}_{A^\delta_R}$ are continuous over the compact $A^\delta_R$, therefore, they are bounded and are discontinuous only over the null set $\partial A^\delta_R$. Thanks to the absolute continuity of $(X_T^{t,x,y},Y^{t,y}_T)$ and convergence in distribution of $(X_T^{t_k,x_k,y_k},Y^{t_k,y_k}_T)$ towards it, if $k\ge k_1$
  $$
  |\E[f(X_T^{t_k,x_k,y_k},Y^{t_k,y_k}_T)\mathds{1}_{(X_T^{t_k,x_k,y_k},Y^{t_k,y_k}_T)\in A^\delta_R}]- \E[f(X_T^{t,x,y},Y^{t,y}_T)\mathds{1}_{(X_T^{t,x,y},Y^{t,y}_T)\in A^\delta_R}]| \le \varepsilon
  $$
  Finally if $k\ge \max(k_0,k_1)$ one has 
  $$
  |\E[f(X_T^{t_k,x_k,y_k},Y^{t_k,y_k}_T)] - \E[f(X_T^{t,x,y},Y^{t,y}_T)]| \le 5\varepsilon.
  $$
  If we consider $(t,x,y)\in\{T\}\times A$, where $A=(\R\times\R_+)\backslash\bDf$ then for any sequence $(t_k, x_k, y_k)_{k\in\N}$ that convergences to $(t,x,y)$ and all the previous estimation still works because the limit law this time is a Dirac mass over $(t,x,y)$ that is not a discontinuity point of $f$.
\end{proof}

%%%%%Here uniqueness for discontinuous functions
Thanks to the previous proposition, we can consider some type of discontinuities in the final data $f$ (or initial data if we consider the forward problem). We state and prove the following result.
\begin{theorem}\textbf{(Verification Theorem)}\label{verification_theorem}
  Let $f:\R\times\R_+\rightarrow\R$ be a polynomial growth function such that $\bDf$ has zero Lebesgue measure, and $h\in \mcC_{\pol,T}(\R\times\R_+)$.
  Then $u$ in \eqref{u_sol_chap3} is the unique viscosity solution to the problem \eqref{reference_PDE} that is $\mcC\big(([0,T]\times\R\times\R_+)\setminus (\{T\}\times \bDf) \big)$ and that has polynomial growth in $(x,y)$ uniformly in $t$. 
\end{theorem}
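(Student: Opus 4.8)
The plan is to prove separately the three assertions of the statement: polynomial growth and continuity of $u$ off $\{T\}\times\bDf$, the fact that $u$ solves \eqref{reference_PDE} in the viscosity sense, and uniqueness inside the prescribed class. Throughout I write $z=(x,y)$, $Z^{t,z}_s=(X^{t,x,y}_s,Y^{t,y}_s)$ and split $u=v-w$ with $v(t,z)=\E[e^{\varrho(T-t)}f(Z^{t,z}_T)]$ and $w(t,z)=\E[\int_t^T e^{\varrho(s-t)}h(s,Z^{t,z}_s)\,ds]$. Polynomial growth of $u$ uniformly in $t$ follows from the polynomial growth of $f$ and $h$ together with the uniform moment bounds on $Z^{t,z}$. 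The continuity of $v$ on $([0,T]\times\R\times\R_+)\setminus(\{T\}\times\bDf)$ is exactly Proposition~\ref{continuity_prop} (its $L^p$ hypothesis holds because $f$ has polynomial growth), while $w$ is continuous on all of $[0,T]\times\R\times\R_+$ since $h$ is continuous with polynomial growth, as already used in Proposition~\ref{existence_viscsol_fcont}. Hence $u$ has the claimed continuity.

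For existence I would argue by approximation and stability. Applying Proposition~\ref{baire_approx} to the upper semicontinuous envelope $f^*$, take continuous functions $f^+_n\downarrow f^*$; truncating each $f^+_n$ between $\pm P$ for a fixed continuous polynomial envelope $P\ge |f^*|$ preserves the monotone convergence $f^+_n\downarrow f^*$ while enforcing a uniform polynomial bound. Let $u^+_n$ be the function \eqref{u_sol_chap3} built with final datum $f^+_n$ and source $h$; by Proposition~\ref{existence_viscsol_fcont} each $u^+_n$ is a continuous viscosity solution of \eqref{reference_PDE}. Since $\bDf$ is Lebesgue-null and the law of $Z^{t,z}_T$ is absolutely continuous for $t<T$, one has $\E[f^*(Z^{t,z}_T)]=\E[f(Z^{t,z}_T)]$, so dominated convergence (with dominating variable $P(Z^{t,z}_T)$) yields $u^+_n\downarrow u$ pointwise on $[0,T)\times\R\times\R_+$. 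As the $u^+_n$ and the limit $u$ are continuous there, Dini's theorem upgrades this to local uniform convergence on the compact subsets of $[0,T)\times\R\times\R_+$. Passing to the forward problem \eqref{reference_PDE_forward}, where all the $F_n$ coincide with $F$ because $h$ is unchanged, Lemma~\ref{sequence_lemma_general} shows that $u^F$ satisfies the equation in the viscosity sense on $(0,T]\times\R\times\R_+$; and $u^F(0,\cdot)=u(T,\cdot)=f$, so $u$ is a viscosity solution.

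For uniqueness, let $\tilde u$ be any solution in the class. The comparison principle of Proposition~\ref{comparison_principle} cannot be invoked directly, because two such solutions may be discontinuous on $\{T\}\times\bDf$ and nothing forces their interior boundary values at $t=T$ to respect $f$; this initial-layer difficulty is the main obstacle. I would bypass it by a sub-strip representation. Fix $T_1\in(0,T)$: on $[0,T_1]$ the restriction $\tilde u|_{[0,T_1]}$ is a viscosity solution with continuous final datum $\tilde u(T_1,\cdot)\in\mcC_{\pol}(\R\times\R_+)$, so Proposition~\ref{uniqueness_and_regularity_f_cont} identifies it with its Feynman--Kac representation; the same representation holds for $u$ by the Markov (tower) property. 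Subtracting and cancelling the $h$-integrals yields
\[
 (\tilde u-u)(t,z)=\E\!\left[e^{\varrho(T_1-t)}(\tilde u-u)\big(T_1,Z^{t,z}_{T_1}\big)\right],\qquad t\le T_1 .
\]
The left-hand side is independent of $T_1$, hence equals its limit as $T_1\to T^-$. Along almost every path $Z^{t,z}_{T_1}\to Z^{t,z}_T\notin\bDf$ (absolute continuity, $\bDf$ null), and $\tilde u-u$ is continuous up to $t=T$ off $\bDf$ with value $0$ there, so the integrand tends to $0$ almost surely; polynomial growth and uniform moment bounds provide the domination needed to pass to the limit. Hence $(\tilde u-u)(t,z)=0$ for $t<T$, and $\tilde u=u=f$ at $t=T$, proving $\tilde u=u$.
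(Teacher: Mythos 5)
Your proof is correct, and it splits into a part that mirrors the paper and a part that is genuinely different. For existence you do essentially what the paper does: a Baire monotone approximation (Proposition \ref{baire_approx}) of a semicontinuous modification of $f$, dominated convergence plus absolute continuity of the law of $(X^{t,x,y}_T,Y^{t,y}_T)$ to get pointwise convergence for $t<T$, Dini's theorem on compacts of $[0,T)\times\R\times\R_+$, and the stability Lemma \ref{sequence_lemma_general}; the only cosmetic difference is that the paper modifies $f$ to equal $\pm\chi$ on $\bDf$ (with $\chi$ a polynomial envelope) and runs \emph{two} monotone sequences $u^{\pm}_n$, while you truncate the upper envelope $f^*$ and run one. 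For uniqueness the routes diverge, and your diagnosis of the obstruction is the right one: the semicontinuous envelopes of a competitor $v$ at $t=T$ need not be ordered with respect to $f$ on $\bDf$. The paper's cure is precisely the $\pm\chi$ modification: since any solution in the class satisfies $|v|\le\chi$, one gets $u^-_n(T,\cdot)\le v_*(T,\cdot)\le v^*(T,\cdot)\le u^+_n(T,\cdot)$ by construction, so the comparison principle of Proposition \ref{comparison_principle} \emph{can} be applied, to the pairs $(u^-_n,v_*)$ and $(v^*,u^+_n)$, and letting $n\to\infty$ squeezes $v$ between two copies of $u$ for $t<T$. You instead localize in time: on each sub-strip $[0,T_1]$, $T_1<T$, the restriction of any solution in the class is a continuous, polynomially growing viscosity solution, so Proposition \ref{uniqueness_and_regularity_f_cont} identifies it with its Feynman--Kac representation; subtracting the representations (the $h$-terms cancel) and letting $T_1\to T^-$ kills the difference, by a.s. avoidance of the null set $\bDf$ and domination. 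Both arguments ultimately rest on the same two pillars, the comparison principle (yours uses it only through Proposition \ref{uniqueness_and_regularity_f_cont}, i.e.\ only for continuous data) and the absolute continuity of the terminal law; the paper's version stays entirely inside viscosity machinery and yields the two-sided envelope squeeze as a by-product, while yours is shorter on the PDE side at the price of the Markov/flow identity for $u$ and a domination step. For the latter, you should make explicit that the bound $|(\tilde u-u)(T_1,Z^{t,z}_{T_1})|\le C\bigl(1+\sup_{s\in[t,T]}|X^{t,x,y}_s|^L+\sup_{s\in[t,T]}(Y^{t,y}_s)^L\bigr)$ holds with $C,L$ independent of $T_1$ (this uses that the growth in your class is uniform in $t$) and that the right-hand side is integrable by standard moment estimates for the diffusion; with that spelled out, the argument is complete.
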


\begin{proof}
  Let $u$ be as in \eqref{u_sol_chap3} that is a function in the class considered, and let $v$ a solution in this same class. We use an approach that directly proves that $u$ is a solution and is the only one in the class considered.
  We show that exist a continuous sequence $(u^-_n)_{n\in\N}$ of sub-solution and a continuous sequence $(u^+_n)_{n\in\N}$ of super-solution such that
  \begin{align}
    &u^-_n(T,\cdot,\cdot)\le v_*(T,\cdot,\cdot)\le v^*(T,\cdot,\cdot)\le u^+_n(T,\cdot,\cdot), \label{squeeze_v} \\
    \text{for any compact } &\text{set } \mcK_T\subset[0,T)\times\R\times\R_+, \text{ one has }\lim_{n\rightarrow\infty} |u^\pm_n-u|_0^{\mcK_T}  \label{local_uniform_conv_VT},
  \end{align}
  where $v_*,v^*$ are respectively the lower and the upper semicontinuous envelope of $v$. 
  The local uniform convergence \eqref{local_uniform_conv_VT} tells us, thanks to Lemma \ref{sequence_lemma_general}, the limit $u$ is both sub and a super-solution, so it is a solution. 
  Then, if one has the inequalities \eqref{squeeze_v}, one can apply the comparison principle (reverting in time the solutions) comparing $u^-_n$ to $v_*$, and $u^+_n$ to $v^*$ getting the relations
  \begin{align*}
    u(t,x,y)&=\lim_{n\rightarrow\infty}u^+_n(t,x,y)\ge v^*(t,x,y)=v(t,x,y),\quad \text{ for all } t<T, \, (x,y)\in\R\times\R_+,\\
    u(t,x,y)&=\lim_{n\rightarrow\infty}u^-_n(t,x,y)\le v_*(t,x,y)=v(t,x,y),\quad \text{ for all } t<T, \, (x,y)\in\R\times\R_+,
  \end{align*}
  where we used the fact that $v_*(t,x,y)=v(t,x,y)=v^*(t,x,y)$ because $v$ is continuous for every point such that $t<T$. Then, $u=v$ everywhere because they have the same final data.
  We prove, so, the existence of continuous sequences of solutions that satisfy \eqref{squeeze_v}. We consider a modified version of final data $f^\pm$ defined as follows
  \begin{equation}
    \begin{cases}
      f^\pm(x,y)= \pm \chi(x,y) &\text{for }(x,y)\in N \\
      f^\pm(x,y)= f(x,y) &\text{otherwise},
      \end{cases}
  \end{equation}
  where $\chi(x,y)=C(1+|x|^L+y^L)$ $(C>0,L\in\N^*)$ is such that $|u(t,x,y)|,|v(t,x,y)|\le \chi(x,y)$, and we define the functions $u^\pm$
  $$
  u^\pm(t,x,y)=\E\left[e^{\varrho (T-t)}f^\pm(X_T^{t,x,y},Y^{t,y}_T)-\int_t^T e^{\varrho (s-t)} h(s,X^{t,x,y}_s,Y^{t,y}_s)ds\right].
  $$
  Being $f^-$ and $f^+$ respectively lower semicontinuous and upper semicontinuous, thanks to Proposition $\ref{baire_approx}$, there exists 
  $(f^-_n)_{n\in\N_*}\subset C(\R\times\R_+)$ non-decreasing sequence converging to $f^-$ and such that $f^-_n\ge-\chi$ (otherwise consider $\hat{f}^-_n=f^-_n\vee -\chi$) and $(f^+_n)_{n\in\N_*}\subset C(\R\times\R_+)$ non-increasing sequence converging to $f^+$ and such that $f^+_n\ge\chi$ (otherwise consider $\hat{f}^+_n=f^+_n\wedge \chi)$. We define in the same way as $u^\pm$
  $$
  u^\pm_n(t,x,y)=\E\left[e^{\varrho (T-t)}f^\pm_n(X_T^{t,x,y},Y^{t,y}_T)-\int_t^T e^{\varrho (s-t)} h(s,X^{t,x,y}_s,Y^{t,y}_s)ds\right].
  $$
  By Proposition \ref{existence_viscsol_fcont} $u^+_n$ are continuous solution with final data $f^+_n\ge f\ge v_*(T,\cdot,\cdot)$ and $u^-_n$ are continuous solution with final data $f^-_n\le f\le v^*(T,\cdot,\cdot)$ (proving \eqref{squeeze_v}). Furthermore, thanks to Lebesgue Theorem $u^\pm_n\rightarrow u^\pm$ when $n\rightarrow\infty$ and $u^\pm=u$ for $t<T$ because $f$ differs from $f^\pm$ only on a negligible set and $(X^{t,x,y}_T,Y^{t,y}_T)$ has density, and the convergence is monotone in $n$ for any point. Then, considering the convergence on any compact set $\mcK_T\subset[0,T)\times\R\times\R_+$ we have a monotone sequence  ($u^+_n$ or $u^-_n$) of continuous functions that converges everywhere over $\mcK_T$ to a continuous function $u$, so this convergence must be uniform for Dini's Theorem (proving \eqref{local_uniform_conv_VT}).
\end{proof}

We conclude with one remark and an example.

\begin{remark}\label{remark_regularity_f_discont}
  One should remark that if we add in Theorem \ref{verification_theorem} that $h$ is locally Hölder in the compact sets of $[0,T)\times\R\times\R^*_+$, then $u$ belongs also to $\mcC^{1,2}\big([0,T)\times(\R\times\R^*_+)\big)$.
  The proof of the regularity is the same as in Proposition \ref{uniqueness_and_regularity_f_cont}, so we do not repeat here.
\end{remark}

\begin{ex}\label{digital_option}
  Theorem \ref{verification_theorem} covers a wide range of possibilities. One of them is the case of digital options in the Heston Model. We fix the parameters $c = r-\delta,$ $d=-1/2$, $\lambda=1$ and $\varrho =r$
  \begin{equation*}
    \begin{cases}
      \partial_tu(t,x,y) +\mcL u(t,x,y) +r u(t,x,y) = 0,\quad   t\in[0,T), &x\in\R, y\in\R_+, \\
      u(T,x,y) = \ind{[c,d)}(\exp(x)), \quad  &x\in\R, y\in \R_+,
    \end{cases}
  \end{equation*}
  where $0\le c < d \le\infty$.
\end{ex}

%We can relax the continuity condition on $f$ and get the following uniqueness result.
%\begin{corollary}
%   Let $f:\R\times\R_+$ a function with polynomial growth and continuous over an open set $A$ (in the relative topology of $\R\times\R_+$) such that $N=\R\times\R_+ \setminus A$ has zero Lebesgue measure and $h\in \mcC_{\pol,T}(\R\times\R_+)$.
%  Then \eqref{u_gen_sol} is the unique viscosity solution of \eqref{reference_PDE} that is $\mcC\big(([0,T]\times\R\times\R_+)\setminus (\{T\}\times N) \big)$ and that has polynomial growth in $(x,y)$ uniformly in $t$. 
%  
%  Furthermore, if $h$ is locally Hölder in the compact set of $[0,T)\times\R\times\R^*_+$ then $u\in \mcC^{1,2}\big([0,T)\times(\R\times\R^*_+)\big)$.
%\end{corollary}

\section{Application to finance: a hybrid approximation scheme for the viscosity solution}\label{approximation_section}
Consider the standard Heston model given by the following SDE
\begin{align}\label{Heston_model_sde}
  S_T^{t,s,y} &= s + \int_t^T (r-\delta)S_u du+\int_t^T\rho S_u\sqrt{Y^{t,y}_u} dW_u +\int_t^T\brho S_u\sqrt{Y^{t,y}_u} dB_u, \nonumber\\
  Y^{t,y}_T &= y + \int_t^T(a-bY^{t,y}_u)du + \int_t^T\sigma\sqrt{Y^{t,y}_u}dW_u.
\end{align}
In order to build our approximation, we  apply the transformation $(s,y)\mapsto (\log(s)-\frac{\rho}{\sigma}y,y)$ obtaining the following SDE
\begin{align}\label{logHestonDiffusion_nocorr}
  X_T^{t,x,y} &= x + \int_t^T\Big(r-\delta-\frac{\rho}{\sigma}a+\big(\frac{\rho}{\sigma}b-\frac{1}{2}\big)Y^{t,y}_s\Big) ds +\int_t^T\bar{\rho}\sqrt{Y^{t,y}_s} dB_s, \nonumber\\
  Y^{t,y}_T &= y + \int_t^T(a-bY^{t,y}_s)ds + \int_t^T\sigma\sqrt{Y^{t,y}_s}dW_s,
\end{align}
that given corresponds to a precise choice of the parameters in \eqref{referenceDiffusion}: $\rho=0$, $c=r-\delta-\frac \rho{\sigma}a$, $d=\frac \rho{\sigma}b-\frac 12$ and $\lambda = \brho$. 
The advantage of studying \eqref{logHestonDiffusion_nocorr} instead of the SDE obtained by $(s,y)\mapsto (\log(s),y)$ is that we can exploit that the noise driving the law of $X|Y$ is independent of the one driving $Y$.  
Hereafter, we fix $T>0$, $f\in\mcC_{\pol}(\R\times\R_+)$ and define
\begin{equation}\label{u_solution}
  u(t,x,y)=\E[f(X^{t,x,y}_T,Y^{t,y}_T)],\qquad(t,x,y)\in[0,T]\times\R\times\R_+.
\end{equation}
We know that
\begin{enumerate}
  \item $\P((X_s^{t,x,y},Y_s^{t,y})\in\R\times\R_+,\forall s\in[t,T])=1;$
  \item the function $u$ in \eqref{u_solution} solves the PDE
      \begin{equation}\label{logHeston_PDE_nocorr}
        \begin{cases}
          (\partial_t+\mcL)u(t,x,y) = 0,\quad   t\in[0,T), &x\in\R, y\in\R_+, \\
          u(T,x,y) = f(x,y), \quad  &x\in\R, y\in \R_+,
        \end{cases}
      \end{equation}
      where 
      \begin{equation}\label{Heston_infinit_gen_nocorr}
        \mcL = \frac{y}{2}(\brho^2 \partial^2_{x} +  \sigma^2 \partial^2_{y}) +\mu_X(y) \partial_x + \mu_Y(y) \partial_y,
      \end{equation}
      and  $\mu_X(y)= r-\delta-\rho a/\sigma +(\rho b/\sigma-1/2) y$,  $\mu_Y(y)=a-by$.
\end{enumerate}
%Let us consider a probabilistic scheme $\hX^{x,y}_h,\hY^y_h$ such that the composition $\hX^{n,x,y}_T,\hY^{n,y}_T$ over a uniform time grid $\{t^N_n=nT/N\mid n=0,\ldots,N\}$  is such that
%$$
%\E[g(\hX^{n,x,y}_T,\hY^{n,y}_T)]\xrightarrow[n\rightarrow\infty]{}\E[g(X^{x,y}_T,Y^{y}_T)]
%$$
%for $g\in C^\infty_c(\R\times\R_+)$ or $g$ polynomial function, then the particular form of the solution $u$ guarantees that the same convergence applies for $f$, using approximation arguments similar to what we have already used in Section \ref{viscosity_section}.
In what follows, we prove the convergence for a large space of functions using the recent hybrid approach introduced in \cite{BCT} that we recall in what follows.

\subsection{The hybrid procedure}
Let $u$ be given in \eqref{u_solution}. We recall, briefly, the main ideas and describe the approximation of $u$. We use the Markov property to represent the solution $u(nh,x,y)$ at times $nh$, $h=T/N$, $n=0,\ldots,N$ for $(x,y)\in\R\times\R_+$
\begin{equation}\label{dynamic_programming_principle}
  \begin{cases}
    u(T,x,y)=f(x,y) \quad \text{ and } n=N-1,\ldots,0: \\
    u(nh,x,y)= \E[u((n+1)h,X^{nh,x,y}_{(n+1)h},Y^{nh,y}_{(n+1)h})].
  \end{cases}
\end{equation}

The goal is to build good approximations of the expectations in \eqref{dynamic_programming_principle}.
First, let $(\hY^h_n)_{n=0,\ldots,N}$ be a Markov chain which approximates $Y$, such that $(\hY^h_n)_{n=0,\ldots,N}$ is independent of the noise driving $X$. 
Then, at each step $n=0,1,\ldots,N-1$, for every $y\in\mcY^h_n\subset\R_+$ (the state space of $\hY^h_n$), one writes
$$
  \E[u((n+1)h,X^{nh,x,y}_{(n+1)h},Y^{nh,y}_{(n+1)h})] \approx \E[u((n+1)h,X^{nh,x,y}_{(n+1)h},\hY^h_{n+1})\mid \hY^h_n=y].
$$
As a second step, one approximates the component $X$ on $[nh,(n+1)h]$ by freezing the coefficients in \eqref{logHestonDiffusion_nocorr} at the observed position $\hY^h_n=y$, that is, for $t\in[nh,(n+1)h]$,
$$
X^{nh,x,y}_t \overset{\text{law}}{\approx} \hX^{nh,x,y}_t = x + \Big(r-\delta-\frac{\rho}{\sigma}a+\big(\frac{\rho}{\sigma}b-\frac{1}{2}\big)y\Big)(t-nh)+\brho\sqrt{y}(Z_t-Z_{nh}).
$$
Therefore, by the fact that the Markov chain and the noise driving $X$ are independent, one can write
\begin{align*}
  \E[u((n+1)h,X^{nh,x,y}_{(n+1)h},Y^{nh,y}_{(n+1)h})] &\approx \E[u((n+1)h,\hX^{nh,x,y}_{(n+1)h},\hY^{h}_{n+1})\mid \hY^h_n=y] \\
  &=\E[\phi(\hY^h_{n+1};x,y)\mid \hY^h_n=y]
\end{align*}
where
\begin{equation}
  \phi(\zeta;x,y)=\E[u((n+1)h,\hX^{nh,x,y}_{(n+1)h},\zeta)].
\end{equation}
From the Feynman-Kac formula, one gets $\phi(\zeta;x,y)=v(nh,x;y,\zeta)$, where $(t,x)\mapsto v(t,x;y,\zeta)$ is the solution at time $nh$ of the parabolic PDE Cauchy problem
\begin{equation}\label{PDE_frozencoeff}
  \begin{cases}
    \partial_t v +\mcL^{(y)}v=0, &\text{in } [nh,(n+1)h)\times\R,\\
    v((n+1)h,x;y,\zeta) = u((n+1)h,x,\zeta), &x\in\R,
  \end{cases}
\end{equation}
where $\mcL^{(y)}$ acts on function $g=g(x)$ as follows
\begin{equation}\label{mcLy_frozencoeff}
  \mcL^{(y)} g(x)= \Big(r-\delta-\frac{\rho}{\sigma}a+\big(\frac{\rho}{\sigma}b-\frac{1}{2}\big)y\Big)\partial_x g(x) + \frac{1}{2}\brho^2y\partial^2_xg(x).
\end{equation}
We remark that in \eqref{PDE_frozencoeff}-\eqref{mcLy_frozencoeff}, $y\in\R_+$ is just a parameter, so $\mcL^{(y)}$ has constant coefficients.
Consider now a numerical solution of the PDE \eqref{PDE_frozencoeff}. Let $\Dx$ denote a fixed initial spatial step, and set $\mcX$ as a grid on $\R$ given by $\mcX=\{x\in\R \mid x=X_0+i\Dx, i\in\Z \}$. For $y\in\R$, let $\PhDx(y)$ be a linear operator (acting on suitable functions on $\mcX$) which gives the approximating solution to the PDE \eqref{PDE_frozencoeff} at time $nh$. Then, as $x\in\mcX$, we get the numerical approximation
$$
\E[u((n+1)h,X^{nh,x,y}_{(n+1)h},Y^{nh,y}_{(n+1)h})] \approx \E[\PhDx(y)u((n+1)h,\cdot,\hY^{h}_{n+1})(x)\mid \hY^h_n=y].
$$
Therefore, by inserting in \eqref{dynamic_programming_principle}, one sees that the hybrid numerical procedure works as follows: the function $x\mapsto u(0,x,Y_0)$, $x\in\mcX$, is approximated by $u^h_0(x,Y_0)$ backward-defined as 
\begin{equation}\label{hybrid_scheme}
  \begin{cases}
    u^h_N(x,y) = f(x,y), \qquad (x,y)\in \mcX\times\mcY^h_N,\quad \text{and as } n=N-1,\ldots,0: \\
    u^h_n(x,y) = \E[\PhDx(y) u^h_{n+1}(\cdot,\hat{Y}^h_{n+1})(x) \mid \hat{Y}^h_{n}=y], \quad (x,y)\in\mcX\times\mcY^h_n.
    \end{cases}
\end{equation}

\subsection{Convergence in $\ell^\infty$}
We recall the finite difference scheme and the Markov chain $(\hY^h_n)_{n=0,\ldots,N}$, that under suitable hypothesis on $f$ assures the convergence of the Hybrid procedure to the solution $u$ \eqref{u_solution}.
Specifically, if $\mu_X(y)=\frac{h}{\Dx}r-\delta-\frac{\rho}{\sigma}a+\big(\frac{\rho}{\sigma}b-\frac{1}{2}\big)y\geq 0$, we approximate  $(\partial_t+\mcL^{(y)})v$ by using the scheme 
$$
\frac{v^{n+1}_i-v^n_i}{h}+ \mu_X(y)\frac{v^{n}_{i+1}-v^n_i}{\Dx} + \frac{1}{2}\brho^2y\frac{v^{n}_{i+1}-2v^n_i+v^n_{i-1}}{\Dx^2} ,
$$
while, if $\mu_X(y)\leq 0$, we use the approximation
$$
\frac{v^{n+1}_i-v^n_i}{h}+ \mu_X(y)\frac{v^{n}_{i}-v^n_{i-1}}{\Dx} + \frac{1}{2}\brho^2y\frac{v^{n}_{i+1}-2v^n_i+v^n_{i-1}}{\Dx^2}.
$$
The resulting scheme is 
\begin{equation}\label{equaz2}
A^h_{\Dx}(y)v^n=v^{n+1},
\end{equation}
where $A^h_{\Dx}(y)$ is the linear operator given by  
\begin{equation}\label{A2}
(A^h_{\Dx})_{ij}(y)=\begin{cases}
-\beta^h_{\Dx}(y)-|\alpha^h_{\Dx}(y)|\ind{\alpha^h_{\Dx}(y)<0},\qquad &\mbox{ if }i=j+1,\\
1+2\beta^h_{\Dx}(y)+|\alpha^h_{\Dx}(y)|,\qquad &\mbox{ if }i=j,\\
-\beta^h_{\Dx}(y)-|\alpha^h_{\Dx}(y)|\ind{\alpha^h_{\Dx}(y)>0},\qquad &\mbox{ if }i=j-1,\\
0, &\mbox{ if }|i-j|>1,
\end{cases}
\end{equation}  
with
$$
\alpha^h_{\Dx}(y)=\frac{h}{\Dx}r-\delta-\frac{\rho}{\sigma}a+\big(\frac{\rho}{\sigma}b-\frac{1}{2}\big)y, \qquad \beta^h_{\Dx}(y)=\frac{h}{2\Dx^2}y.
$$
We finally define $\PhDx(y)=\big(A^h_{\Dx}(y)\big)^{-1}$. 

Along with the finite different scheme, we need a Markov chain $(\hY^h_n)_{n=0,1,\ldots,N}$ approximating the CIR process $Y$ over the time grid $(nT/N)_{n=0,1,\ldots,N}$. The state space is the following, for $n=0,1,\ldots,N$ one has the lattice
\begin{equation}\label{vnk}
\mathcal{Y}_n^h=\{y^n_k\}_{k=0,1,\ldots,n}\quad\mbox{with}\quad
y^n_k=\Big(\sqrt {y}+\frac{\sigma} 2(2k-n)\sqrt{h}\Big)^2\ind{\{\sqrt {y}+\frac{\sigma} 2(2k-n)\sqrt{h}>0\}}.
\end{equation}
Note that $\mathcal{Y}_0^h=\{y\}$. For each fixed node $(n,k)\in\{0,1,\ldots,N-1\}\times\{0,1,\ldots,n\}$, the ``up'' jump $k_u(n,k)$ and the ``down'' jump $k_d(n,k)$ from $y^n_k\in\mathcal{Y}_n^h$ are defined as 
\begin{align}
\label{ku2}
&k_u(n,k) =
\min \{k^*\,:\, k+1\leq k^*\leq n+1\mbox{ and }y^n_k+\mu_Y(y^n_k)h \le y^{n+1}_{k^*}\},\\
%\min K_u(n,k),\ \mbox{with}\   K_u(n,k)=\{k^*\,:\, k+1\leq k^*\leq n+1\mbox{ and }y^n_k+\mu_Y(y^n_k)h \le y^{n+1}_{k^*}\},\\
\label{kd2}
&k_d(n,k)=
\max \{k^*\,:\, 0\leq k^*\leq k \mbox{ and }y^n_k+\mu_Y(y^n_k)h \ge y^{n+1}_{ k^*}\},
%\max K_d(n,k),\ \mbox{with}\   K_d(n,k)=\{k^*\,:\, 0\leq k^*\leq k \mbox{ and }y^n_k+\mu_Y(y^n_k)h \ge y^{n+1}_{ k^*}\},
\end{align}
where $\mu_Y(y)=a-by$ and
with the understanding $k_u(n,k)=n+1$, resp. $k_d(n,k)=0$,  if the set in \eqref{ku2}, resp. \eqref{kd2}, is empty. 
Starting from the node $(n,k)$ the probability that the process jumps to $k_u(n,k)$ and $k_d(n,k)$ at time-step $n+1$ are set respectively as
\begin{equation*}
p_u(n,k)
=0\vee \frac{\mu_Y(y^n_k)h+ y^n_k-y^{n+1}_{k_d(n,k)} }{y^{n+1}_{k_u(n,k)}-y^{n+1}_{k_d(n,k)}}\wedge 1
\quad\mbox{and}\quad p_d(n,k)=1-p_u(n,k).
\end{equation*}
We call $(\hY^h_n)_{n=0,1,\ldots,N}$ the Markov chain governed by the above jump probabilities.

Let $\ell^\infty(\mcX)=\{g:\mcX\rightarrow\R\mid \sup_{x\in\mcX} g(x)<\infty\}$ with the norm $|g|_{\ell^\infty}=\sup_{x\in\mcX} g(x)$.
With the above Markov chain, Briani et al. in \cite{BCT} proved that $\PhDx(\cdot)$ satisfies the following Assumption $\mcK$ with $c=1$ and $\mcE=h+\Dx$.

\begin{definition}[\textbf{Assumption} $\mcK(\infty,c,\mcE)$]
  Let $c=c(y)\ge 0$, $y\in\R_+$, and $\mcE=\mcE(h,\Dx)\ge 0$ such that $\lim_{(h,\Dx)\rightarrow0}\mcE(h,\Dx)=0$. We say that the linear operator $\PhDx(y):\ell^\infty(\mcX)\rightarrow\ell^\infty(\mcX)$, $y\in\mcD$, satisfies this assumption if 
\begin{equation}
  \|\PhDx(y)\|_\infty:= \sup_{|f|_{\ell^\infty}=1}|\PhDx(y)f|_{\ell^\infty}\le 1+c(y)h, 
\end{equation}
and, with $u$ being defined in \eqref{u_solution}, for every $n=0,\ldots,N-1$, one has
\begin{equation}
  \E[\PhDx(\hY^h_n)u((n+1)h,\cdot,\hY^{h}_{n+1})(x)\mid Y^h_n] = u(nh,x,\hY^h_n)+\mcR^h_n(x,\hY^h_n),
\end{equation}
where the remainder $\mcR^h_n(x,\hY^h_n)$ satisfies the following property: there exists $\bar{h},C>0$ such that for every $h<\bar{h},\Dx<1$, and $n\le N=\lfloor T/h \rfloor$ one has
\begin{equation}
  \left\|e^{\sum_{l=1}^n c(\hY^h_l)h}|\mcR^h_n(\cdot,\hY^h_n)|_{\ell^\infty} \right\|_{L^1(\Omega)} \le C h\mcE(h,\Dx).
\end{equation}
\end{definition}

In \cite{BCT}, Briani et al. proved the following Theorem.
\begin{theorem}\label{BCT_regular_approx}
  Let $u$ defined in \eqref{u_solution}, $(u^h_n)_{n=0,\ldots,N}$ be given by \eqref{hybrid_scheme} with the choice
  $$
  \PhDx(y)=\big(A^h_{\Dx}(y)\big)^{-1},
  $$
  where $A^h_\Dx(y)$ is given in \eqref{A2}, and $(\hY^h_n)_{n=0,1,\ldots,N}$ defined as above.
  If $\partial_x ^{2j}f\in \mcC^{\infty,q-j}_{\pol}(\R,\R_+)$, for every $j=0,1,\ldots,4$, then, there exist $\bar{h},C>0$ such that for every $h<\bar{h}$ and $\Dx<1$ one has 
  \begin{equation}
    |u(0,\cdot,y) - u^h_0(\cdot,y)|_{\ell^\infty} \le C(h+\Dx).
  \end{equation}
\end{theorem}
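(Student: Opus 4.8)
The plan is to reduce the global convergence estimate to the verification of Assumption $\mcK(\infty,c,\mcE)$ with $c\equiv 1$ and $\mcE(h,\Dx)=h+\Dx$ for the concrete operator $\PhDx(y)=\big(A^h_\Dx(y)\big)^{-1}$ and the Markov chain $(\hY^h_n)_n$, and then to propagate the one--step errors through the $N=\lfloor T/h\rfloor$ time layers by a telescoping argument. The regularity hypotheses $\partial_x^{2j}f\in\mcC^{\infty,4-j}_{\pol}(\R,\R_+)$, $j=0,\dots,4$, serve only to guarantee, via the $L^\infty$--version of Proposition \ref{prop-reg-new}, that the solution $u$ of \eqref{logHeston_PDE_nocorr} possesses spatial and mixed derivatives $\partial_x^m\partial_y^n u$ up to the orders needed below, each with $\sup_x|\partial_x^m\partial_y^n u(t,x,y)|$ of polynomial growth in $y$ uniformly in $t$. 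This is precisely the input that the consistency analysis consumes.

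First I would settle stability. The matrix $A^h_\Dx(y)$ in \eqref{A2} has positive diagonal, nonpositive off--diagonal entries, and unit row sums, since $1+2\beta^h_\Dx+|\alpha^h_\Dx|-\beta^h_\Dx-\beta^h_\Dx-|\alpha^h_\Dx|\big(\ind{\alpha^h_\Dx<0}+\ind{\alpha^h_\Dx>0}\big)=1$. Hence it is an M--matrix with $A^h_\Dx(y)\mathbf{1}=\mathbf{1}$, so $\PhDx(y)$ has nonnegative entries and $\PhDx(y)\mathbf{1}=\mathbf{1}$; consequently $\|\PhDx(y)\|_\infty=1\le 1+c(y)h$, giving the first requirement of Assumption $\mcK$ with $c=1$. (For the infinite tridiagonal system on the grid $\mcX$ this positivity is the discrete maximum principle of the implicit scheme and must be stated as such.)

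The heart of the proof, and the step I expect to be the main obstacle, is the consistency bound on the one--step remainder $\mcR^h_n$. I would decompose the local error $\E[\PhDx(\hY^h_n)u((n+1)h,\cdot,\hY^h_{n+1})(x)\mid\hY^h_n]-u(nh,x,\hY^h_n)$ into three pieces: (i) the weak error of replacing the CIR increment by the Markov--chain step $\hY^h_n\to\hY^h_{n+1}$, controlled at order $h^2$ per step because the tree in \eqref{vnk} is built to match the first two conditional moments of $Y$; (ii) the error of freezing the coefficients of $X$ at the observed level $\hY^h_n=y$ on $[nh,(n+1)h]$, again of order $h^2$ after averaging the Itô remainder, where the independence of the noise driving $X$ from the one driving $Y$ is essential; and (iii) the finite--difference truncation error of $\PhDx(y)$ relative to the exact frozen--coefficient semigroup $e^{h\mcL^{(y)}}$, of order $h\,\Dx$ for the upwind/central scheme. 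Each piece is estimated by Taylor expanding $u$ and its derivatives and bounding them by the polynomial--growth constants supplied by Proposition \ref{prop-reg-new}; summing gives a pointwise bound $|\mcR^h_n(x,y)|\le C\,h(h+\Dx)P(y)$ with $P$ a polynomial, after which the required estimate $\big\|e^{\sum_{l=1}^n c(\hY^h_l)h}|\mcR^h_n(\cdot,\hY^h_n)|_{\ell^\infty}\big\|_{L^1}\le C\,h\,\mcE(h,\Dx)$ follows from uniform moment bounds on $\hY^h_n$. Matching the orders $h^2,h^2,h\Dx$ correctly, and verifying that the polynomial weights stay integrable against the law of the chain, is the delicate bookkeeping.

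Finally I would close the global estimate. Writing $\epsilon_n:=u(nh,\cdot,\hY^h_n)-u^h_n(\cdot,\hY^h_n)$ and subtracting the scheme \eqref{hybrid_scheme} from the consistency identity yields $\epsilon_n=\E[\PhDx(\hY^h_n)\epsilon_{n+1}\mid\hY^h_n]-\mcR^h_n$. Taking $|\cdot|_{\ell^\infty}$, using $\|\PhDx(\hY^h_n)\|_\infty\le 1+c(\hY^h_n)h$ together with the tower property, and multiplying by the weight $e^{\sum_{l=1}^n c(\hY^h_l)h}$ makes the recursion telescope. Summing over $n=N-1,\dots,0$ with $\epsilon_N=0$ and inserting the bound from Assumption $\mcK$ gives $|u(0,\cdot,y)-u^h_0(\cdot,y)|_{\ell^\infty}=|\epsilon_0|_{\ell^\infty}\le e^{cT}\sum_{n=0}^{N-1}C\,h\,\mcE(h,\Dx)\le C\,T(h+\Dx)$, which is the asserted $O(h+\Dx)$ rate.
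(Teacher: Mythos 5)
The paper does not prove this theorem at all: it is recalled verbatim from \cite{BCT}, with the preceding text stating that $\PhDx(\cdot)$ satisfies Assumption $\mcK(\infty,c,\mcE)$ with $c=1$ and $\mcE=h+\Dx$, which is precisely the two-part structure (stability plus one-step consistency remainder, then propagation through the $N$ time layers) that your proposal reconstructs. Your outline therefore takes essentially the same route as the cited proof --- M-matrix/discrete maximum principle for $\|\PhDx(y)\|_\infty\le 1$, a one-step remainder of size $h(h+\Dx)$ weighted by moments of the chain, and a telescoping recursion --- and the consistency bookkeeping you correctly flag as the main obstacle is exactly where the work in \cite{BCT} lies.
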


We are about to show that, under less regular $f$, we still have the convergence of the hybrid procedure. We present a lemma that will help us prove this result.
\begin{lemma}\label{stability_conv_Cpol}
  Let $f\in \mcC^{\infty,0}_{\pol}(\R,\R_+)$, $(\varphi_l)_{l\in\N^*}$ a sequence of mollifiers over $\R^2$ and
  \begin{equation}
    \tf(x,y) = f(x,0\vee y) \quad \text{and} \quad f_l=\tf\ast \varphi_l.
  \end{equation}
  Then, for all $q\in\N$, $f_l\in \mcC^{\infty,q}_{\pol}(\R,\R_+)$. In particular  $\exists C_0,C^*_0 >0$ such that for all $l\in\N$, $x\in\R$ and $y\in\R_+$
  \begin{equation}\label{const_stability_conv}
    |f_l(x,y)|\le C_0(1+|x|^L+y^L), \quad \sup_{x\in\R}|f_l(x,y)|\le C^*_0(1+y^L).
  \end{equation}
\end{lemma}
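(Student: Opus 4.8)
The plan is to read everything off the convolution representation $f_l(x,y)=\int_{\R^2}\tf(x-u,y-v)\,\varphi_l(u,v)\,du\,dv$ and to push the two polynomial bounds enjoyed by $f$ through the extension $\tf$ and the mollification. I normalise the mollifiers as $\varphi_l(z)=l^2\varphi(lz)$, with $\varphi\in\mcC^\infty_c(\R^2)$, $\varphi\ge0$, $\int_{\R^2}\varphi=1$ and $\mathrm{supp}\,\varphi\subset\bar B(0,1)$; then $\int_{\R^2}\varphi_l=1$ and, crucially, $\mathrm{supp}\,\varphi_l\subset\bar B(0,1)$ for every $l\in\N^*$, so the shift $(u,v)$ in the convolution always satisfies $|u|,|v|\le 1$. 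Since $f\in\mcC^{\infty,0}_{\pol}(\R,\R_+)$, after enlarging the single exponent $L$ so that it dominates both the polynomial-growth exponent of $f$ and its $L^\infty$-in-$x$ exponent, I may assume $|f(x,y)|\le C(1+|x|^L+y^L)$ and $\sup_{x\in\R}|f(x,y)|\le C(1+y^L)$. The extension $\tf(x,y)=f(x,0\vee y)$ is continuous on all of $\R^2$ and, because $0\vee y\le|y|$, inherits $|\tf(x,y)|\le C(1+|x|^L+|y|^L)$ and $\sup_{x}|\tf(x,y)|\le C(1+|y|^L)$.

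Smoothness of $f_l$ is immediate: convolving a locally bounded measurable function of polynomial growth with a compactly supported $\mcC^\infty$ kernel yields a $\mcC^\infty$ function, and differentiation falls on the kernel, $\partial_x^\alpha\partial_y^\beta f_l=\tf\ast\partial_x^\alpha\partial_y^\beta\varphi_l$. For the two uniform bounds \eqref{const_stability_conv} I take absolute values inside the integral. Using $|u|,|v|\le 1$ together with the elementary inequality $(t+1)^L\le 2^L(1+t^L)$ for $t\ge0$, the displacement is absorbed into a universal constant: $|\tf(x-u,y-v)|\le C'(1+|x|^L+|y|^L)$ with $C'$ depending only on $C$ and $L$, whence, since $\int\varphi_l=1$, I obtain $|f_l(x,y)|\le C_0(1+|x|^L+y^L)$ for $y\ge0$ with $C_0=C'$ independent of $l$. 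For the second estimate I take the supremum in $x$ under the integral sign and use translation invariance, $\sup_{x}|\tf(x-u,y-v)|=\sup_{x'}|\tf(x',y-v)|\le C(1+|y-v|^L)$; applying $|y-v|\le y+1$ and again $\int\varphi_l=1$ produces $\sup_{x}|f_l(x,y)|\le C_0^*(1+y^L)$ with $C_0^*$ independent of $l$. This establishes \eqref{const_stability_conv}.

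It remains to show $f_l\in\mcC^{\infty,q}_{\pol}(\R,\R_+)$ for every fixed $l$ and every $q$, i.e. that each $\partial_x^\alpha\partial_y^\beta f_l$ has polynomial growth in $(x,y)$ and an $L^\infty$-in-$x$ norm of polynomial growth in $y$. Repeating the previous computation with $\partial_x^\alpha\partial_y^\beta\varphi_l$ in place of $\varphi_l$ gives $|\partial_x^\alpha\partial_y^\beta f_l(x,y)|\le C'(1+|x|^L+y^L)\,\|\partial_x^\alpha\partial_y^\beta\varphi_l\|_{L^1}$ and $\sup_x|\partial_x^\alpha\partial_y^\beta f_l(x,y)|\le C''(1+y^L)\,\|\partial_x^\alpha\partial_y^\beta\varphi_l\|_{L^1}$, where $\|\partial_x^\alpha\partial_y^\beta\varphi_l\|_{L^1}=l^{\alpha+\beta}\|\partial_x^\alpha\partial_y^\beta\varphi\|_{L^1}$ is finite for each fixed $l$. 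Both bounds are thus of the required polynomial type, with the common exponent $L$, which is all membership in $\mcC^{\infty,q}_{\pol}(\R,\R_+)$ demands, since that space permits the constants to depend on $l$ and on the order of differentiation. The only genuinely delicate point is the uniformity in $l$ of $C_0,C_0^*$ in \eqref{const_stability_conv}: it rests entirely on $\mathrm{supp}\,\varphi_l$ staying inside a fixed ball (so the displacement is bounded independently of $l$) and on correctly transporting the $L^\infty$-in-$x$ growth of $f$ through the extension $\tf$. The derivative estimates, by contrast, are allowed to deteriorate as $l\to\infty$, so they present no difficulty.
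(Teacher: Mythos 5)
Your proof is correct and takes essentially the same route as the paper's: represent $f_l=\tf\ast\varphi_l$, note that differentiation falls on the kernel ($D^k f_l=\tf\ast D^k\varphi_l$), push the two polynomial bounds on $\tf$ through the convolution, and allow the derivative constants to depend on $l$ while keeping the zeroth-order constants $C_0,C_0^*$ uniform. The only cosmetic difference is the device used for that uniformity: you absorb the shift via the uniformly bounded support of $\varphi_l$, whereas the paper observes that $\int_{\R^2}(|\zeta|^L+|\eta|^L)\,\varphi_l(\zeta,\eta)\,d\zeta\,d\eta\to 0$ as $l\to\infty$; both rest on the same concentration property of the mollifier family.
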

\begin{proof}
  Let $q\in\N$. Let $f\in \mcC^{\infty,0}_{\pol}(\R,\R_+)$, then $\tf\in \mcC^{\infty,0}_{\pol}(\R,\R)$ and in particular $\tilde{f}\in\mathbb{L}^\infty_{\text{loc}}(\R^2)$, hence $f_l\in C^\infty(\R^2)$ and, for all multi-index $k$, $D^k f_l = \tilde{f}*D^k\varphi_l$. It remains for us to prove the polynomial growth of $D^kf_l$ and $\sup_{x\in\R}|D^k f_l(x,y)|$ for all multi-index $k$ such that $|k|\le q$. Let $x\in\R,~y\in\R_+$, using that $\tf\in \mcC^{\infty,0}_{\pol}(\R,\R)$ and $(a+b)^L\le 2^{L-1}(a^L+b^L)$ for all $a,b\ge0$
  \begin{align*}
    |D^k f_l(x,y)|&= \left|\int_{\R^2}\tilde{f}(\zeta-x,\eta-y)D^k\varphi_l(\zeta,\eta)d\zeta d\eta \right| \le \int_{\R^2}|\tilde{f}(\zeta-x,\eta-y)| |D^k\varphi_l(\zeta,\eta)|d\zeta d\eta \\
    &\le \int_{\R^2}C(1+|\zeta-x|^L+|\eta-y|^L) |D^k\varphi_l(\zeta,\eta)|d\zeta d\eta \\
    &\le C +C_L(|x|^L+y^L) +C_L \int_{\R^2}(|\zeta|^L+|\eta|^L) |D^k\varphi_l(\zeta,\eta)|d\zeta d\eta \\
    &\le C_k(l)(1 +|x|^L+y^L) \\
    &\le \Big(\max_{k \,\text{s.t.}\, |k|\le q} C_k(l)\Big) (1 +|x|^L+y^L),
  \end{align*}
  where we used the fact that $D^k\varphi_l$ is $C^\infty_c(\R^2)$.
  Furthermore, if $k=0$ than $|D^k\varphi_l(\zeta,\eta)|=\varphi_l(\zeta,\eta)$ and the integral $\int_{\R^2}(|\zeta|^L+|\eta|^L) \varphi_l(\zeta,\eta) d\zeta d\eta \rightarrow 0$ when $l$ goes to $\infty$. So the constant $C_0(l)=C_0$.
  Now, let $y\in\R_+$,
  \begin{align*}
    \sup_{x\in\R} |D^k f_l(x,y)|  &= \sup_{x\in\R} \left|  \int_{\R^2}\tilde{f}(\zeta-x,\eta-y) D^k\varphi_l(\zeta,\eta) d\zeta d\eta\right| \\
    & \le\int_{\R^2}\sup_{z\in\R}|\tilde{f}(z,\eta-y)| |D^k\varphi_l(\zeta,\eta)|d\zeta d\eta, \\
    &\le (C+C_L y^L) +C_L \int_{\R^2}|\eta|^L |D^k\varphi_l(\zeta,\eta)|d\zeta d\eta \\
    &\le C^*_k(l)(1+y^L) \\
    &\le \Big(\max_{k \,\text{s.t.}\, |k|\le q} C^*_k(l)\Big)  (1+y^L),
  \end{align*}
  where once again, we used the fact that $D^k\varphi_l$ is $C^\infty_c(\R^2)$. As in the previous estimate, if $k=0$ than $|D^k\varphi_l(\zeta,\eta)|=\varphi_l(\zeta,\eta)$ and the integral $\int_{\R^2} |\eta|^L \varphi_l(\zeta,\eta) d\zeta d\eta \rightarrow 0$ when $l$ goes to $\infty$. So the constant $C^*_0(l)=C^*_0$.
\end{proof}

We state and prove the main contribution of this section.

\begin{theorem}\label{convergence_theorem}
  Let $f\in \mcC^{\infty,0}_{\pol}(\R,\R_+)$ and suppose that $f$ is uniformly continuous over the sets $\R\times[0,M]$ for all $M>0$. Let $u$ be the viscosity solution defined in \eqref{u_solution} and $u^h_0$ the discrete solution \eqref{hybrid_scheme} produced by the backward hybrid procedure starting from $f$.
  Then
  \begin{equation}
    u^h_0(\cdot,y) \xrightarrow[(h,\Dx)\rightarrow 0]{l^\infty} u(0,\cdot,y).
  \end{equation}
\end{theorem}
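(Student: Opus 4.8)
The plan is to combine the known convergence rate for smooth data (Theorem \ref{BCT_regular_approx}) with the mollification estimates of Lemma \ref{stability_conv_Cpol} through a three–term splitting, fixing the mollification level before sending $(h,\Dx)\to 0$. Writing $f_l=\tf\ast\varphi_l$ for the mollified data of Lemma \ref{stability_conv_Cpol}, I denote by $u_l(t,x,y)=\E[f_l(X^{t,x,y}_T,Y^{t,y}_T)]$ the exact solution with data $f_l$, and by $u^{h,l}_0$ the output of the scheme \eqref{hybrid_scheme} started from $f_l$. Then
\begin{align*}
|u^h_0(\cdot,y)-u(0,\cdot,y)|_{\ell^\infty}
&\le |u^h_0(\cdot,y)-u^{h,l}_0(\cdot,y)|_{\ell^\infty}+|u^{h,l}_0(\cdot,y)-u_l(0,\cdot,y)|_{\ell^\infty}\\
&\quad+|u_l(0,\cdot,y)-u(0,\cdot,y)|_{\ell^\infty}.
\end{align*}
The middle term is handled by Theorem \ref{BCT_regular_approx}: by Lemma \ref{stability_conv_Cpol}, $f_l\in\mcC^{\infty,q}_{\pol}(\R,\R_+)$ for every $q$, so the hypotheses of that theorem hold and the middle term is $\le C_l(h+\Dx)$ for a constant $C_l$ depending on $l$ but not on $(h,\Dx)$. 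This forces the order of limits: I fix $l$ first and only afterwards let $(h,\Dx)\to 0$.

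The first and third terms are controlled by the single $y$–function
$$
G_l(y):=\sup_{x\in\R}|f(x,y)-f_l(x,y)|,
$$
which by Lemma \ref{stability_conv_Cpol} satisfies $G_l(y)\le 2C^*_0(1+y^L)$ uniformly in $l$, and which tends to $0$ uniformly for $y$ in any strip $[0,M]$ (this uses that $f$ is uniformly continuous on $\R\times[0,M+1]$, whence the mollifications converge uniformly there). The key structural point is that, since $\mcC^{\infty,0}_{\pol}$ means an $L^\infty$ bound in $x$, the data error at \emph{every} $x$ is dominated by $G_l(y)$. For the third term, the stochastic representation gives $|u_l(0,x,y)-u(0,x,y)|\le\E[G_l(Y^{0,y}_T)]$, independent of $x$. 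For the first term, $w^h_n:=u^h_n-u^{h,l}_n$ satisfies $w^h_n(\cdot,y)=\E[\PhDx(y)w^h_{n+1}(\cdot,\hY^h_{n+1})\mid\hY^h_n=y]$; using only $\|\PhDx(y)\|_\infty\le 1+c(y)h$ from Assumption $\mcK$ (with $c=1$) and taking $\ell^\infty(\mcX)$–norms, the function $\gamma_n(y):=|w^h_n(\cdot,y)|_{\ell^\infty(\mcX)}$ obeys $\gamma_n(y)\le(1+h)\,\E[\gamma_{n+1}(\hY^h_{n+1})\mid\hY^h_n=y]$ with $\gamma_N(y)\le G_l(y)$. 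Iterating and bounding $(1+h)^N\le e^{T}$ yields $|u^h_0(\cdot,y)-u^{h,l}_0(\cdot,y)|_{\ell^\infty}\le e^{T}\,\E[G_l(\hY^h_N)\mid\hY^h_0=y]$, again independent of $\Dx$ and of $x$.

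It remains to bound $\E[G_l(\hY^h_N)]$ and $\E[G_l(Y^{0,y}_T)]$. Splitting on $\{\hY^h_N\le M\}$ and its complement and using $G_l\le 2C^*_0(1+y^L)$,
$$
\E[G_l(\hY^h_N)]\le \sup_{0\le y\le M}G_l(y)+2C^*_0\,\E\big[(1+(\hY^h_N)^L)\ind{\hY^h_N>M}\big],
$$
and likewise with $Y^{0,y}_T$ in place of $\hY^h_N$. The CIR process has finite moments of all orders, so its tail term $\to 0$ as $M\to\infty$. For the tree the decisive ingredient is a \emph{uniform in $h$} moment bound $\sup_{h}\E[(\hY^h_N)^{L+1}]<\infty$, which via Markov's inequality makes the tree tail term $\le C/M\to 0$ as $M\to\infty$ uniformly in $h$. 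Granting this, the $\varepsilon/3$ argument closes: given $\varepsilon>0$, first pick $M$ so large that both tail terms are $<\varepsilon/6$ uniformly in $h$ and $l$; then pick $l$ so large that $\sup_{0\le y\le M}G_l(y)<\varepsilon/6$, making the first and third terms each $<\varepsilon/3$; finally, with $l$ fixed, choose $\bar h,\overline{\Dx}$ so that $C_l(h+\Dx)<\varepsilon/3$ for $h<\bar h$, $\Dx<\overline{\Dx}$. Summing the three bounds gives $|u^h_0(\cdot,y)-u(0,\cdot,y)|_{\ell^\infty}<\varepsilon$.

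The main obstacle is precisely the uniform–in–$h$ moment control of the approximating tree $(\hY^h_n)$, needed to make the tail term negligible simultaneously for all step sizes; everything else reduces to the $\ell^\infty$ stability of the scheme (through the operator–norm bound of Assumption $\mcK$), the mollification estimates of Lemma \ref{stability_conv_Cpol}, and the black–box rate of Theorem \ref{BCT_regular_approx}. The structural simplification that the data error at any $x$ is dominated by the $y$–function $G_l$ is exactly what the $\mcC^{\infty,0}_{\pol}$ framework provides, and it is what lets both the true diffusion and the tree tails be treated uniformly.
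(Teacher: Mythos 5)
Your proof is correct and follows essentially the same route as the paper: the same mollification via Lemma \ref{stability_conv_Cpol} and three-term splitting, Theorem \ref{BCT_regular_approx} for the middle term, and control of the two outer terms by taking the sup in $x$, splitting on $\{Y\le M\}$ versus its complement, and using uniform continuity on strips for the near part and moment bounds for the tails, with the identical order of limits ($M$ first, then $l$, then $(h,\Dx)$). The only cosmetic differences are your use of $\|\PhDx(y)\|_\infty\le 1+h$ (costing a harmless $e^T$ factor) where the paper invokes $\|\PhDx(y)\|_\infty\le 1$ from \cite[Lemma 4.7]{BCT}, and plain Markov with an $(L+1)$-moment in place of the paper's H\"older--Markov combination; the uniform-in-$h$ moment boundedness of $\hY^h_N$ that you single out as the decisive ingredient is likewise asserted, not proven, in the paper.
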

\begin{proof}
  Let $(\varphi_l)_{l\in\N^*}$ a sequence of mollifiers. For all $l\in\N^*$, we define  $u_l(t,x,y) = \E[f_l(X^{t,x,y}_T,Y^{t,y}_T)]$ where we replaced $f$ with a mollified final data $f_l=f\ast\varphi_l$, and $u^{h}_{n,l}$ as the discrete solution produced by the backward hybrid procedure starting from $f_l$. Let $x\in\mcX$ and $y\in\R_+$, then
  \begin{align*}
    |u(0,\cdot,y) - u^h_0(\cdot,y)|_{\ell^\infty} \le
    &\underbrace{|u(0,\cdot,y) - \hu_l(0,\cdot,y)|_{\ell^\infty}}_{I}
    +\underbrace{|u_l(0,\cdot,y)-u^{h}_{0,l}(\cdot,y)|_{\ell^\infty}}_{II} \\
    +&\underbrace{|u^{h}_{0,l}(\cdot,y) - u^h_0(\cdot,y)|_{\ell^\infty}}_{III}.
  \end{align*}
Thanks to Theorem \ref{BCT_regular_approx}, term $II$ can be upper bounded by $C_lT(h+\Dx)$ that goes to zero when $(h,\Dx)$ goes to zero (and does not depend on $x$).
Regarding the term $I$, defined $(X^{\cdot,y}_T,Y^{y}_T)=(X^{0,\cdot,y}_T,Y^{0,y}_T)$ one has
\begin{align*}
  |u(0,\cdot,y) - u_l(0,\cdot,y)|_{\ell^\infty} &\le |\E[f(X^{\cdot,y}_T,Y^{y}_T)-f_l(X^{\cdot,y}_T,Y^{y}_T)]|_{\ell^\infty} \\
  &\le |\E[\big(f(X^{\cdot,y}_T,Y^{y}_T)-f_l(X^{\cdot,y}_T,Y^{y}_T)\big) \mathds{1}_{Y^{y}_T\le M}]|_{\ell^\infty} \\
  &\quad+| \E[\big(f(X^{\cdot,y}_T,Y^{y}_T)-f_l(X^{\cdot,y}_T,Y^{y}_T)\big) \mathds{1}_{Y^{y}_T>M}]|_{\ell^\infty} \\
  &\le \sup_{x\in\R,y\in[0,M]}|f(x,y)-f_l(x,y) | \\
  &\quad+ \E\big[\big(|f(\cdot,Y^{y}_T)|_{\ell^\infty}+|f_l(\cdot,Y^{y}_T)|_{\ell^\infty}\big) \mathds{1}_{Y^{y}_T>M}\big].
\end{align*}
Using \eqref{const_stability_conv}, $\exists C>0$ such that $|f(\cdot,Y^{y}_T)|_{\ell^\infty}+|f_l(\cdot,Y^{y}_T)|_{\ell^\infty}\le C(1+(Y^{y}_T)^L)$, then using Holder inequality and then Markov inequality one has
\begin{equation}\label{estim_I}
  I \le \sup_{x\in\R,y\in[0,M]}|f(x,y)-f_l(x,y) |+ C\|(1+(Y^{y}_T)^L)\|_{L^p(\Omega)} \left(\frac{E[Y^{y}_T]}{M}\right)^{\frac{p-1}{p}}.
\end{equation}
Regarding the term $III$, by linearity of the conditional expectation and of the linear operator $\Pi^h_\Dx(y)$ for $n=0,\ldots,N-1$
$$
u^{h}_{n,l}(\cdot,y) - u^h_n(\cdot,y) = \E\big[\Pi^h_\Dx(\hat{Y}^h_{n}) \big( u^{h}_{n,l}(\cdot,\hat{Y}^h_{n+1}) - u^h_{n+1}(\cdot,\hat{Y}^h_{n+1}) \big) \mid \hat{Y}^h_{n}=y \big].
$$
Then we can rewrite the difference  $u^{h}_{0,l}(\cdot,y) - u^h_0(\cdot,y)$ can be seen as the discrete solution constructed starting from $f_l-f$. In fact, using the tower properties, we get by induction
\begin{align*}
  u^{h}_{0,l}(\cdot,y) - u^h_0(\cdot,y) = &\E\bigg[\prod_{j=0}^{N-1}\Pi^h_\Dx(\hat{Y}^h_{j}) \big( f_l(\cdot,\hat{Y}^h_{N}) - f(\cdot,\hat{Y}^h_{N}) \big) \mid \hat{Y}^h_{0}=y \bigg] \\
  = &\E\bigg[\prod_{j=0}^{N-1}\Pi^h_\Dx(\hat{Y}^h_{j}) \big( f_l(\cdot,\hat{Y}^h_{N}) - f(\cdot,\hat{Y}^h_{N}) \big)\bigg].
\end{align*}
Furthermore, \cite[Lemma 4.7]{BCT} guarantees  $\|\Pi^h_\Dx(y)\|_{\infty} \le 1$ and so
$$
\Big\|\prod_{j=0}^{N-1}\Pi^h_\Dx(\hat{Y}^h_{j})\Big\|_{\infty} \le 1.
$$
Then
\begin{align*}
  |u^{h}_{0,l}(\cdot,y) - u^h_0(\cdot,y)|_{\ell^\infty} &\le \E\bigg[\Big\|\prod_{j=0}^{N-1}\Pi^h_\Dx(\hat{Y}^h_{j})\Big\|_{\infty}  | f_l(\cdot,\hat{Y}^h_{N}) - f(\cdot,\hat{Y}^h_{N})|_{\ell^\infty}\bigg] \\
  &\le \E[|f_l(\cdot,\hat{Y}^h_{N}) - f(\cdot,\hY^h_{N})|_{\ell^\infty}]. 
\end{align*}
Now proceeding as for term $I$, one can show
\begin{equation}\label{estim_III}
  III \le \sup_{x\in\R,y\in[0,M]}|f(x,y)-f_l(x,y) |+ C\|(1+(\hY^h_{N})^L)\|_{L^p(\Omega)} \left(\frac{E[\hY^h_{N}]}{M}\right)^{\frac{p-1}{p}}.
\end{equation}
Finally, for every $\varepsilon>0$, thanks to the boundedness of the moments of $Y^{0,y}_T$ and the uniform (in $N$) boundedness of all the moments of $\hY^h_{N}$,  we can choose $M>0$ big enough to guarantee that the second term in the right-hand sides of \eqref{estim_I} and \eqref{estim_III} are less than $\varepsilon/5$. Chosen $M$, thanks to the uniform continuity of $f$, we can take $l$ big enough to guarantee $\sup_{x\in\R,y\in[0,M]}|f-f_l|\le \varepsilon/5$  and $N_0\in\N^*$, $\delta>0$ such that for every $N\ge N_0$ and $\Dx<\delta$ one has $C_lT(h+\Dx)<\varepsilon/5$, and so 
$$
|u(0,\cdot,y) - u^h_0(\cdot,y)|_{\ell^\infty} \le \frac{2\varepsilon}{5}+\frac{\varepsilon}{5}+\frac{2\varepsilon}{5} = \varepsilon.
$$
\end{proof}

%%%%%%%%%%%%%%%%%%%%%%%%%%%%%%%%%%%%%%%%%%%%%%%%%%%%%%%%%%%%%%%
%%%%%%%%%%%%%%%%%%%%%%%%%%%%%%%%%%%%%%%%%%%%%%%%%%%%%%%%%%%%%%%
%%%%%%%%%%%%%%%%%%%%%%%%%%%%%%%%%%%%%%%%%%%%%%%%%%%%%%%%%%%%%%%
\bibliographystyle{abbrv}
\bibliography{Bib_visco}

\end{document}